\documentclass[a4paper,12pt]{article}
\usepackage[utf8]{inputenc}
\usepackage[english]{babel}
\usepackage{enumitem}
\usepackage{color}
\usepackage{amsmath}
\usepackage{amssymb}
\usepackage{amsthm}
\usepackage{mathrsfs}
\usepackage{amsfonts}
\usepackage{epsfig}
\usepackage{graphicx}
\usepackage{epstopdf}
\usepackage{multirow}
\usepackage{cite}
\usepackage{caption}
\usepackage{algorithm}
\usepackage{algorithmic}

\usepackage{subfigure}
\usepackage[top=2cm,bottom=2cm,left=2cm,right=2cm]{geometry}

\usepackage[english]{babel}
\usepackage{bbding}
\usepackage{latexsym,bm}

\newtheorem{theorem}{Theorem}[section]
\newtheorem{definition}{Definition}[section]
\newtheorem{lemma}{Lemma}[section]

\newtheorem{remark}{Remark}[section]

\theoremstyle{example}
\theoremstyle{definition}
\theoremstyle{Assumption}

\allowdisplaybreaks

\begin{document}

\title{A primal-dual splitting algorithm for monotone inclusions with applications}
\author{ \sc \normalsize Changchi Huang$^{a}${\thanks{Email: cchuang@gzhu.edu.cn}},\,Jigen Peng$^{a}${\thanks{Email: jgpeng@gzhu.edu.cn}},\,Liqian Qin$^{a}${\thanks{Email: qlqmath@163.com}},\,Yuchao Tang$^{a}${\thanks{ Corresponding author. Email: hhaaoo1331@163.com}}
\\
\small $^{a}$School of Mathematics and Information Science, Guangzhou University,\\
\small Guangzhou 510006, P.R. China,\\
}
\date{}
\maketitle

	\begin{abstract}
{.\\ In this paper, we study a broad class of structured monotone inclusion problems in real Hilbert spaces. We propose a novel primal–dual splitting algorithm for solving such inclusions, which accommodates multiple monotone operators and cocoercive terms, as well as a composite monotone operator involving the linear map. The algorithm combines forward evaluations for the cocoercive components with backward resolvent steps for the monotone operators and employs a dual update for the linear composition term. It generalizes and unifies several existing methods, while requiring only a single resolvent or operator evaluation per iteration. We prove weak convergence of the iterates under standard assumptions on monotonicity and cocoercivity. Furthermore, we establish strong convergence under a mild regularity condition, such as uniform monotonicity. Numerical experiments on image deblurring and denoising problems demonstrate the efficiency and flexibility of the proposed algorithm.

\noindent {\bf Key words}: Splitting algorithm; Maximally monotone operator; Cocoercive operator; Image deblurring. \\

\noindent {\bf AMS Subject Classification}:  47H05, 65K15, 90C25 }
\end{abstract}

\newpage

\section{Introduction}

Monotone inclusion problems play a fundamental role in modern optimization theory and its applications. Such problems providing a unified framework encompassing convex minimization,
variational inequalities, equilibrium problems, and more. In particular, many tasks in signal processing, image restoration, and machine learning can be formulated as convex minimization problems and these problems can  be transformed as monotone inclusion problems in terms of finding a zero of a sum of monotone
operators. See, for example \cite{Condat2022FSP,Condat2023SIAM,Combettes2024AN}. This broad applicability has made monotone inclusion models as topic of contemporary interest in applied mathematics.

Over the past few decades, operator splitting algorithms have emerged as powerful tools for solving monotone inclusion problems. Early methods focused on splitting a sum of two monotone operators, such as the forward-backward splitting algorithm \cite{lionsandmercier1979}, the Douglas-Rachford splitting algorithm \cite{combettes2007}, the forward-backward-forward splitting algorithm \cite{Tseng2000SIAM}, and their variants \cite{lorenz2015JMIV,Bot2015AMC,Attouch2018SJO}, etc. As problem complexity grew, researchers developed more advanced splitting algorithms to handle multi-operator monotone inclusion problems involving compositions of linear operators. For example, Brice\~{n}o-Arias and Combettes \cite{briceno2011SIAM} proposed a primal-dual splitting algorithm for a sum of a maximally monotone operator and another monotone operator composed with a linear mapping. Combettes and Pesquet \cite{combettes2012} then extended this approach to a more general inclusion containing mixtures of composite, Lipschitzian, and parallel sum type operators. Around the same time, Condat \cite{condat2013} and V\~{u} \cite{vu2013ACM} independently proposed closely related primal-dual splitting algorithms for monotone inclusions arising from convex minimization. These algorithms usually referred as the Condat-Vu algorithm, which have been widely adopted in imaging and signal processing applications. For other related work, please refer to \cite{Valkonen2014,Bot2016NA,Combettes2018MP,Gao2022IP,Bang2024AMV} and references therein.

More recently, attention has focused on splitting methods capable of handling the sum of a finite number of maximally monotone operators and cocoercive operators or monotone Lipschitz operators. In particular, Arg\'{o}n-Artacho et al. \cite{ArtachoCOA2023} first proposed a class of forward-backward-type algorithms, which did not rely on reducing the problem to a two-operator inclusion in a product space. Instead, each iteration of this algorithm requires only one resolvent evaluation per set-valued operator, one forward evaluation per cocoercive operator, and two forward evaluations per monotone operator. This algorithm includes both the Davis-Yin splitting algorithm \cite{davis2015} and the resolvent splitting algorithms of \cite{MalitskyandTam2023MP} as special cases. Building on these findings, the work of \cite{ArtachoCOA2023} serves as a promising foundation for developing a more general framework that can accommodate diverse network topologies. Later, Arg\'{o}n-Artacho et al. \cite{Aragon2024Arxiv} proposed a graph-forward-backward splitting algorithm, which extended the work of \cite{BrediesSIAMJO2024} to cover the case involving a finite number of cocoercive operators. The algorithms are guided by three graphs that define variable interactions and resolvent computations. Assumptions on these graphs ensure minimal lifting and frugality: the fixed point operator acts in a minimal-dimensional space, and each resolvent and cocoercive operator is evaluated only once per iteration. In contrast, Dao et al. \cite{Dao2025Arxiv} developed a distributed splitting algorithm that splits a global task among networked nodes, each handling a single operator and communicating only with direct neighbors. The proposed algorithm in \cite{Dao2025Arxiv}  encompasses several existing methods as special cases, including the forward–backward algorithms designed for graphs \cite{Aragon2024Arxiv}, the forward–backward and forward–reflected–backward algorithms for ring networks \cite{ArtachoCOA2023}, the sequential and parallel forward–Douglas–Rachford algorithms \cite{Bredies2022SIAMJO}, the generalized forward–backward algorithm \cite{Raguet-SIAM-2013}, and various product-space formulations of the Davis–Yin algorithm, including a reduced-dimensional variant. Published online around the same time, Akerman et al. \cite{Akerman2025Arxiv} proposed a frugal splitting algorithm with minimal lifting, ensuring single-pass evaluation per operator, low memory usage, an exact fixed-point formulation, and convergence guarantees via averaged nonexpansiveness.
On the other hand, Arg\'{o}n-Artacho et al. \cite{ArtachoNA2022} proposed a primal-dual resolvent splitting algorithm with minimal lifting for finding a zero of the sum of maximally monotone operators involving compositions with bounded linear operators, which is is derived from a fixed-point characterization of the monotone inclusions. In Table \ref{table-1}, we summarize the above-mentioned related work as well as other studies from recent years on problems involving three or more monotone inclusions.

\begin{table}[h]
\centering
\caption{Monotone inclusion problems involving multiple maximally monotone operators, cocoercive operators or monotone Lipschitz continuous operators, where $A, A_1, \cdots, A_n$ are maximally monotone operators, $C, C_1, \cdots, C_m$ are cocoercive operators, $B, B_1, \cdots, B_p$ are monotone Lipschitz continuous operators, and $L$ is bounded linear operator whose adjoint is denoted by $L^{*}$. }
\begin{tabular}{c|c|c}
\hline
Monotone inclusion problems & References & Minimal lifting \\
\hline
\hline
$0\in \sum_{i=1}^{n}A_i x + B x$, $n\geq 2$ & \cite{Banert2012,Briceno2015JOTA} & No \\
$0\in \sum_{i=1}^{n}A_i x + C x$, $n\geq 2$ & \cite{Raguet-SIAM-2013,Raduet2015SIAMJIS,briceno2015Optim,Raguet2019OL} & No \\
$0\in \sum_{i=1}^{n}A_i x + L^{*}ALx + C x$, $n\geq 2$  & \cite{Tang2022JSC} & No \\
$0\in \sum_{i=1}^{n}A_i x + L^{*}ALx$, $n\geq 2$ & \cite{Yang2021IPI}  & No \\
$0\in A_1 x + A_2 x + A_3 x + Cx$ & \cite{ZongSVVA2023,Duan2025,Tang2025Opt} & Yes \\
$0\in A_1 x + A_2 x + A_3 x + Bx$ & \cite{CaoJOTA2024} & Yes \\
$0\in \sum_{i=1}^{n}A_i x + \sum_{k=1}^{n-1}C_k x$, $n\geq 2$ &  \cite{ArtachoCOA2023} & Yes \\
$0\in \sum_{i=1}^{n}A_i x + \sum_{j=1}^{n-2}B_j x$, $n\geq 3$ &  \cite{ArtachoCOA2023} & Yes \\
$0\in \sum_{i=1}^{n}A_i x + \sum_{k=1}^{m}C_k x$, $n \geq 2$, $m\geq 1$ &  \cite{Aragon2024Arxiv,Dao2025Arxiv,Akerman2025Arxiv} & Yes \\
$0\in \sum_{i=1}^{n}A_i x + L^{*}A Lx$, $n\geq 2$ & \cite{ArtachoNA2022} & Yes \\
\hline

\end{tabular}\label{table-1}

\end{table}

The purpose of this paper is to propose a fully splitting algorithm to solve the following monotone inclusion:
\begin{equation}\label{problem1}
\textrm{find }x\in \mathcal{H} \quad \textrm{such that } 0\in \sum_{i=1}^{n}A_{i}x+L^{\ast}BLx+\sum_{k=1}^{m}C_{k}x,
\end{equation}
where $n\geq 2, m \geq 1$, $A_1, \cdots, A_n :\mathcal{H}\rightarrow 2^{\mathcal{H}}$ are maximally monotone operators on a Hilbert
space $\mathcal{H}$, for each $k\in \{1, \cdots, m\}$, $C_k:\mathcal{H} \rightarrow \mathcal{H}$ is $\frac{1}{\beta_k}$-cocoercive, for some $\beta_k>0$, $L:\mathcal{H} \rightarrow \mathcal{G}$ is a bounded linear operator from $\mathcal{H}$ to Hilbert space $\mathcal{G}$ with adjoint operator $L^{*}$, and $B:\mathcal{G} \rightarrow 2^{\mathcal{G}}$ is maximally monotone operator.

Our main contributions can be summarized as follows:

(i) We develop a novel primal-dual splitting algorithm for the general monotone inclusion (\ref{problem1}). The algorithm efficiently handles the presence of multiple operator components by combining forward steps for cocoercive terms, backward resolvent steps for monotone operators, and a dual update mechanism for the composite monotone component. Importantly, it maintains a low per-iteration computational cost, requiring only one resolvent or operator evaluation per component per iteration.

(ii) We rigorously prove the convergence of the proposed algorithm. Under standard assumptions, the iterative sequence is shown to converge weakly to a solution of (\ref{problem1}). Moreover, we establish strong convergence of the iterates by assuming an additional uniform monotonicity condition on one of the maximally monotone operators.

(iii) Extensive numerical experiments on image deblurring and denoising problems demonstrate that the proposed algorithm achieves competitive performance compared with existing methods.

The remainder of this paper is organized as follows. In Section 2, we review the necessary background on monotone operator theory and convex analysis. Section 3 introduces our new splitting algorithm in detail and states the main convergence theorems. Furthermore, we demonstrate an application of the proposed algorithm to a structured convex minimization problem. Section 4 presents numerical experiments on image deblurring and denoising problems to demonstrate the performance of the proposed algorithm. Finally, we give conclusions of the paper and discuss possible directions for future research.

\section{Preliminaries\label{sec:pre}}

Throughout this paper, let $\mathcal{H}$ be a real Hilbert space, i.e., a real inner-product space that is complete with respect to the induced norm. We denote the inner product by $\langle\cdot,\cdot\rangle$ and the induced norm by $\|\cdot\|$. We abbreviate strong convergence in $\mathcal{H}$ with $\rightarrow$ and we use $\rightharpoonup$ for weak convergence. We use $\mathbb{R}_{++}$ to denote the set of all positive real numbers (strictly positive real numbers), i.e., real numbers greater than zero, and $\mathbb{R}_+$ to denote the set of non-negative real numbers (real numbers greater than or equal to zero).

Let \( A : \mathcal{H} \to 2^{\mathcal{H}} \) be a set-valued operator. We denote by \( \text{dom}\, A = \{ x \in \mathcal{H} : Ax \neq \varnothing \} \) its domain, by \( \text{zer}\, A = \{ x \in \mathcal{H} : 0 \in Ax \} \) its set of zeros, by \( \text{ran}\, A = \{ u \in \mathcal{H} : \exists x \in \mathcal{H},\, u \in Ax \} \) its range, by \( \text{gra}\, A = \{ (x, u) \in \mathcal{H} \times \mathcal{H} : u \in Ax \} \) its graph, and by \( A^{-1} : \mathcal{H} \to 2^{\mathcal{H}},\, u \mapsto \{ x \in \mathcal{H} : u \in Ax \} \) its inverse.

\begin{definition}[\cite{bauschkebook2017},Monotone operator]\label{def:monotone-operator}
Let \( A : \mathcal{H} \to 2^{\mathcal{H}} \) be a set-valued operator. The operator \( A \) is said to be
\begin{itemize}
    \item[\emph{(i)}]  monotone, if
    \[
    \langle x - y, u - v \rangle \geq 0, \quad \forall \ (x, u), (y, v) \in \text{gra}\, A.
    \]
 Furthermore, $A$ is said to be maximally monotone, if there exists no monotone operator \( A' : \mathcal{H} \to 2^{\mathcal{H}} \) such that \( \text{gra} A' \) properly contains \( \text{gra} A \),
    \item[\emph{(ii)}] uniformly monotone with modulus \( \phi_A : \mathbb{R}_+ \to [0, +\infty] \), if \( \phi_A \) is increasing, vanishes only at \( 0 \), and
    \[
    \langle x - y, u - v \rangle \geq \phi_A\bigl(\|x - y\|\bigr), \quad \forall \ (x, u), (y, v) \in \text{gra} A,
    \]
    \item[\emph{(iii)}] \( \beta \)-strongly monotone with \( \beta \in \mathbb{R}_{++} \), if it is uniformly monotone with modulus \( \phi_A : \mathbb{R}_+ \to [0, +\infty] \), \( \phi_A(t) = \beta t^2 \), i.e.,
    \[
    \langle x - y, u - v \rangle \geq \beta \|x - y\|^2, \quad \forall \ (x, u), (y, v) \in \text{gra} A.
    \]
\end{itemize}
\end{definition}

The resolvent of an operator $A: \mathcal{H} \to 2^{\mathcal{H}}$ with parameter $\gamma >0$ is defined by $J_{\gamma A} = (I + \gamma A)^{-1}$.

\begin{definition}[\cite{bauschkebook2017}]
Let \( T : \mathcal{H} \to {\mathcal{H}} \) be a single-valued operator. The operator \( T \) is said to be
\begin{itemize}
\item[\emph{(i)}] L-Lipschitz continous, if
$$
\|Tx - Ty\| \leq L \|x-y\|, \quad  \forall x,y \in \mathcal{H}.
$$
In particular, $L=1$, $T$ is said to be nonexpansive.

\item[\emph{(ii)}] $\alpha$-averaged, $\alpha \in (0,1)$, if there exists an nonexpansive operator $R$, such that
$$
T = (1-\alpha)I + \alpha R.
$$
Equivalently, $T$ is $\alpha$-averaged if and only if
 \[
 \|T x - T y\|^2 \leq \|x - y\|^2 - \frac{1 - \alpha}{\alpha} \| (I - T)x - (I - T)y \|^2, \quad \forall x, y \in \mathcal{H}.
 \]

\item[\emph{(iii)}] $\beta$-cocoercive, for some $\beta >0$, if
$$
\langle x -y, Tx - Ty\rangle \geq \beta \| Tx - Ty \|^2, \quad  \forall x,y \in \mathcal{H}.
$$

\end{itemize}

\end{definition}

We begin by recalling several preliminary results from convex analysis. Let
$
f: \mathcal{H} \rightarrow (-\infty, +\infty].
$
The effective domain of \( f \) is defined by
\[
\mathrm{dom}\,f = \{ x \in \mathcal{H} \mid f(x) < +\infty \}.
\]
The function \( f \) is said to be proper if \( \mathrm{dom}\,f \neq \emptyset \). We denote by \( \Gamma_0(\mathcal{H}) \) the class of proper, lower semi-continuous (lsc), convex functions mapping from \( \mathcal{H} \) to \( (-\infty, +\infty] \).

Let \( f \in \Gamma_0(\mathcal{H}) \). The subdifferential of \( f \) is defined as
\[
\partial f : \mathcal{H} \to 2^{\mathcal{H}}, \quad x \mapsto \left\{ v \in \mathcal{H} \mid f(y) \geq f(x) + \langle v, y - x \rangle,\ \forall y \in \mathcal{H} \right\}.
\]

\begin{definition}[\cite{bauschkebook2017}, Proximity Operator]
Let \( f \in \Gamma_0(\mathcal{H}) \). The proximity operator of \( f \) with parameter \( \lambda > 0 \) is defined by
\[
\mathrm{prox}_{\lambda f}(u) = \arg\min_{x \in \mathcal{H}} \left\{ \frac{1}{2\lambda} \|x - u\|^2 + f(x) \right\}.
\]
\end{definition}

The resolvent operator of \( \lambda \partial f \) coincides with the proximity operator of \( \lambda f \); that is,
\[
\mathrm{prox}_{\lambda f} = J_{\lambda \partial f}.
\]

In particular, when \( f(x) = \delta_C(x) \), where \( \delta_C \) is the indicator function of a set \( C \subseteq \mathcal{H} \), the proximity operator reduces to the projection onto \( C \).

The following Opial lemma is a fundamental tool when applied to prove the weak convergence of iterative sequence in Hilbert space.

\begin{lemma}[\cite{bauschkebook2017}]
Let $C$ be a nonempty subset of $\mathcal{H}$ and $\{x_k\}$ be a sequence in $\mathcal{H}$ such that

\emph{(i)} for every $x\in C$, $\lim_{k\rightarrow +\infty}\|x_k - x\|$ exists;

\emph{(ii)} every sequential weak cluster point of $\{x_k\}$ lies in $C$;

Then $\{x_k\}$ converges weakly to a point in $C$.

\end{lemma}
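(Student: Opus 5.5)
The plan is the standard proof of Opial's lemma. It has three steps: show that $\{x_k\}$ is bounded, show that its weak cluster point is unique, and conclude that the whole sequence converges weakly.

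\textbf{Step 1: boundedness.} Since $C$ is nonempty, fix some $\bar x\in C$. By (i), $\lim_k\|x_k-\bar x\|$ exists, so the sequence $\{\|x_k-\bar x\|\}$ is bounded. Hence $\{x_k\}$ is bounded in $\mathcal{H}$. Bounded sets in a Hilbert space are weakly sequentially compact, so $\{x_k\}$ has at least one sequential weak cluster point. By (ii), every such cluster point lies in $C$.

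\textbf{Step 2: uniqueness of the cluster point.} Let $y$ and $z$ be two sequential weak cluster points, say $x_{k_j}\rightharpoonup y$ and $x_{l_j}\rightharpoonup z$. Both lie in $C$ by (ii), so by (i) the limits $\ell_y=\lim_k\|x_k-y\|$ and $\ell_z=\lim_k\|x_k-z\|$ exist. Expanding the squares gives the identity
\[
\|x_k-y\|^2-\|x_k-z\|^2 = 2\langle x_k, z-y\rangle + \|y\|^2-\|z\|^2 .
\]
The left-hand side converges to $\ell_y^2-\ell_z^2$, so $\langle x_k,z-y\rangle$ converges to some limit $\mu$. We evaluate $\mu$ along the two subsequences:
\begin{itemize}
\item along $\{x_{k_j}\}$ we get $\mu=\langle y,z-y\rangle$;
\item along $\{x_{l_j}\}$ we get $\mu=\langle z,z-y\rangle$.
\end{itemize}
Subtracting the two expressions yields $\|z-y\|^2=0$, hence $y=z$.

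\textbf{Step 3: conclusion.} Let $x$ denote the unique cluster point, which lies in $C$. Suppose $x_k\not\rightharpoonup x$. Then there exist $u\in\mathcal{H}$, $\varepsilon>0$ and a subsequence with $|\langle x_{k_j}-x,u\rangle|\ge\varepsilon$ for all $j$. This subsequence is bounded, so it has a further weakly convergent subsequence. Its weak limit is a cluster point of $\{x_k\}$, hence equals $x$, which contradicts the bound $|\langle x_{k_j}-x,u\rangle|\ge\varepsilon$. Therefore $x_k\rightharpoonup x\in C$.

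The only step that is not routine is Step 2, the expansion identity that turns the existence of the norm limits into uniqueness of the weak cluster point. Everything else follows from weak sequential compactness of bounded sets in $\mathcal{H}$.
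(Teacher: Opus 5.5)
Your proof is correct and is essentially the standard argument of the reference the paper cites for this lemma, \cite{bauschkebook2017} (the paper gives no proof of its own). That argument gets boundedness from (i), gets uniqueness of the weak sequential cluster point from the expansion of $\|x_k-y\|^2-\|x_k-z\|^2$, and concludes because a bounded sequence with a unique weak sequential cluster point converges weakly, a fact your Step 3 proves inline by the subsequence argument rather than quoting it as a separate lemma.
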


Finally, we review the definition of the minimax-concave (MC) penalty function; its relationship with the Huber function can be found in, e.g., \cite{Selesnick-2020-JMIV}.

\begin{definition}[MC Penalty]
The scalar minimax-concave (MC) penalty
\[
\psi_a : \mathbb{R} \to \mathbb{R}
\]
with parameter \(a > 0\) is defined as
\[
\psi_a(x) =
\begin{cases}
|x| - \dfrac{a}{2}x^2, & |x| \leq 1/a \\[1em]
\dfrac{1}{2a}, & |x| \geq 1/a.
\end{cases}
\]
For \(a = 0\), the MC penalty is defined as \(\psi_0(x) = |x|\).
\end{definition}

\section{Primal-dual splitting algorithm and convergence analysis}

In this section, we introduce a primal-dual splitting algorithm and analyze its convergence. We then apply it to solve a general class of convex minimization problems. We begin by recalling the monotone inclusion corresponding to the primal problem:
\begin{equation}\label{primal problem}
\textrm{find}\quad x\in \mathcal{H}\quad\textrm{such that}\quad 0\in \sum_{i=1}^{n}A_{i}x+L^{\ast}BLx+\sum_{k=1}^{n-1}C_{k}x,
\end{equation}
together with its dual problem:
\begin{equation}\label{dual problem}
\textrm{find}\quad u\in \mathcal{G}\quad\textrm{such that}\quad 0\in -L\left(\sum_{i=1}^{n}A_{i}+\sum_{k=1}^{n-1}C_k\right)^{-1}(-L^{\ast}u)+B^{-1}u.
\end{equation}

\begin{remark}

To construct a fully splitting algorithm for the considered monotone inclusion problem, we introduce the assumption $m=n-1$ in (\ref{problem1}). It is worth noting that this assumption has also been adopted in \cite{ArtachoCOA2023}. For the case where $m\neq n-1$, we will demonstrate that the results of this paper can still be applied through an appropriate transformation.

\emph{(i)}
If $m > n-1$, then the sum of two or more cocoercive operators remains cocoercive. Therefore, we may equivalently regard the collection of multiple cocoercive operators as a single cocoercive operator, and all conclusions presented in this paper continue to apply. Indeed, let
\[
C = \sum_{k=1}^m C_k,\qquad
\beta = \Big(\sum_{k=1}^m \beta_k \Big)^{-1},\qquad
\alpha_k = \beta\,\beta_k,
\]
so that $\sum_{k=1}^m \alpha_k = 1$. Then we have
\begin{align*}
\langle Cx - Cy,\, x-y \rangle
&= \Big\langle \sum_{k=1}^m C_k x - \sum_{k=1}^m C_k y,\, x-y \Big\rangle \\
&\ge \sum_{k=1}^m \frac{1}{\beta_k}\,\|C_k x - C_k y\|^2 \\
&= \beta \sum_{k=1}^m \alpha_k \left\| \frac{1}{\alpha_k}(C_k x - C_k y) \right\|^2 \\
&\ge \beta \left\| \sum_{k=1}^m (C_k x - C_k y) \right\|^2.
\end{align*}
Hence, it follows that
\[
\langle Cx - Cy,\, x-y \rangle
\ge \beta \| Cx - Cy \|^2,
\]
which shows that $C$ is $\beta$-cocoercive.

\emph{(ii)} If $m<n-1$, we may consider the special case in which $C_{m+1}=\cdots = C_{n-1}=0$. In this situation, the conclusions established in this paper still remain valid.

\end{remark}

\subsection{Main algorithm and convergence analysis}

In this subsection, we first present the main algorithm and then establish its convergence.

Let $\mathcal{P}$ and $\mathcal{D}$ denote the solution sets of (\ref{primal problem}) and (\ref{dual problem}), respectively. We define the set $\bm{Z}$ as:
\begin{equation}\label{the definition of Z}
\bm{Z}:= \left\{(x,u)\in \mathcal{H}\times \mathcal{G}:-L^{\ast}u \in \sum_{i=1}^{n}A_{i}x+\sum_{k=1}^{n-1}C_{k}x, \quad \textrm{and} \quad u\in B(Lx) \right\}.
\end{equation}

Based on the definitions of $\mathcal{P}$, $\mathcal{D}$, and $\mathcal{Z}$, we have
\begin{equation}
\begin{aligned}
\exists\; x\in \mathcal{P} &\Leftrightarrow (\exists \;x\in\mathcal{H})\quad 0\in \sum_{i=1}^{n}A_{i}x+L^{\ast}BLx+\sum_{k=1}^{n-1}C_{k}x\\
&\Leftrightarrow (\exists \;(x,u)\in\mathcal{H}\times\mathcal{G})
\left\{
\begin{aligned}
&-L^{\ast}u\in \sum_{i=1}^{n}A_{i}x+\sum_{k=1}^{n-1}C_{k}x\\
&u\in B(Lx)
\end{aligned}
\right.\\
&\Leftrightarrow (\exists \;(x,u)\in\mathcal{H}\times\mathcal{G})
\left\{
\begin{aligned}
&x\in \left(\sum_{i=1}^{n}A_{i}+\sum_{k=1}^{n-1}C_k\right)^{-1}(-L^{\ast}u) \\
&Lx\in B^{-1}u
\end{aligned}
\right.\\
&\Leftrightarrow (\exists \;u\in\mathcal{G})\quad 0\in -L\left(\sum_{i=1}^{n}A_{i}+\sum_{k=1}^{n-1}C_k\right)^{-1}(-L^{\ast}u)+B^{-1}u\\
&\Leftrightarrow \exists \;u\in \mathcal{D}.
\end{aligned}
\end{equation}
Therefore, it follows that $\mathcal{P}\neq \emptyset \Leftrightarrow \bm{Z}\neq \emptyset \Leftrightarrow \mathcal{D}\neq \emptyset$.

\renewcommand{\algorithmicrequire}{\textbf{Input:}}
\renewcommand{\algorithmicensure}{\textbf{Output:}}
\begin{algorithm}[htb]
\caption{Primal-dual splitting algorithm for solving (\ref{primal problem})-(\ref{dual problem}).}
\label{alg1}
\begin{algorithmic}[1]
\REQUIRE
\vskip 2mm
 Choose $\lambda\in (0,1)$, $\alpha\in (0,\frac{2}{\beta})$, and $\gamma\in \left(0,\frac{1-\frac{1}{2}\alpha\beta}{\alpha\|L\|^2}\right)$, where $\beta = \max\{\beta_k\}_{k=1}^{n-1}$. For any given $\bm{z}^0=(z_{1}^{0},\cdots,z_{n-1}^{0})\in \mathcal{H}^{n-1}$
 and $v^{0}\in \mathcal{G}$. For $k= 0, 1,2, \cdots$, compute
\begin{equation}\label{the main algorithm--1}
\dbinom{\bm{z}^{k+1}}{v^{k+1}}= \dbinom{\bm{z}^{k}}{v^{k}} + \lambda\left(
\begin{matrix}
x_{2}^{k}-x_{1}^{k}\\
x_{3}^{k}-x_{2}^{k}\\
\vdots\\
x_{n}^{k}-x_{n-1}^{k}\\
\gamma(y^{k}-Lx_{n}^{k})
\end{matrix}
\right)
\end{equation}
with
\begin{equation}\label{the main algorithm--2}
\left\{
\begin{aligned}
& x_{1}^{k}=J_{ \alpha A_{1}}(z_{1}^{k})\\
& x_{i}^{k}=J_{ \alpha A_{i}}(z_{i}^{k} + x_{i-1}^{k} - z_{i-1}^{k} - \alpha C_{i-1}x_{i-1}^{k}), \;\forall i\in [2,n-1]\\
& x_{n}^{k}=J_{ \alpha A_{n}}(x_{1}^{k} + x_{n-1}^{k} - z_{n-1}^{k}-\alpha L^{\ast}(\gamma Lx_{1}^{k}-v^{k})-\alpha C_{n-1}x_{n-1}^{k})\\
& y^{k}=J_{\frac{B}{\gamma}}\left(L(x_{1}^{k} + x_{n}^{k}) - \frac{v^{k}}{\gamma}\right)
\end{aligned}
\right.
\end{equation}
Stop when a given stopping criterion is met.
\end{algorithmic}
\end{algorithm}

\begin{remark}

\emph{(i)} Let $B=0$, and $L=0$, Algorithm \ref{alg1} reduces to
\begin{equation}\label{algorithm1-special1}
\begin{aligned}
& \bm{z}^{k+1}  = \bm{z}^{k} + \lambda\left(
\begin{matrix}
x_{2}^{k}-x_{1}^{k}\\
x_{3}^{k}-x_{2}^{k}\\
\vdots\\
x_{n}^{k}-x_{n-1}^{k}\\
\end{matrix}
\right)\\
 \textrm{with} & \\
& \left\{
\begin{aligned}
& x_{1}^{k}=J_{ \alpha A_{1}}(z_{1}^{k})\\
& x_{i}^{k}=J_{ \alpha A_{i}}(z_{i}^{k} + x_{i-1}^{k} - z_{i-1}^{k} - \alpha C_{i-1}x_{i-1}^{k}), \;\forall i\in [2,n-1]\\
& x_{n}^{k}=J_{ \alpha A_{n}}(x_{1}^{k} + x_{n-1}^{k} - z_{n-1}^{k}-\alpha C_{n-1}x_{n-1}^{k}),\\
\end{aligned}
\right.
\end{aligned}
\end{equation}
which recovers the splitting algorithm proposed in \cite{ArtachoCOA2023}.

\emph{(ii)} Let $C_k =0$, for each $k\in \{1, \cdots, n-1\}$,  Algorithm \ref{alg1} becomes
\begin{equation}\label{algorithm1-special2}
\dbinom{\bm{z}^{k+1}}{v^{k+1}}= \dbinom{\bm{z}^{k}}{v^{k}} + \lambda\left(
\begin{matrix}
x_{2}^{k}-x_{1}^{k}\\
x_{3}^{k}-x_{2}^{k}\\
\vdots\\
x_{n}^{k}-x_{n-1}^{k}\\
\gamma(y^{k}-Lx_{n}^{k})
\end{matrix}
\right)
\end{equation}
\quad   with
\begin{equation*}
\left\{
\begin{aligned}
& x_{1}^{k}=J_{ \alpha A_{1}}(z_{1}^{k})\\
& x_{i}^{k}=J_{ \alpha A_{i}}(z_{i}^{k} + x_{i-1}^{k} - z_{i-1}^{k}), \;\forall i\in [2,n-1]\\
& x_{n}^{k}=J_{ \alpha A_{n}}(x_{1}^{k} + x_{n-1}^{k} - z_{n-1}^{k}-\alpha L^{\ast}(\gamma Lx_{1}^{k}-v^{k}))\\
& y^{k}=J_{\frac{B}{\gamma}}\left(L(x_{1}^{k} + x_{n}^{k}) - \frac{v^{k}}{\gamma}\right).
\end{aligned}
\right.
\end{equation*}
When $\alpha =1$, (\ref{algorithm1-special2}) coincides with the primal-dual splitting algorithm introduced by \cite{ArtachoNA2022}.
Therefore, (\ref{algorithm1-special2}) can be viewed as a parameterized extension of the splitting algorithms in \cite{ArtachoNA2022}.

\end{remark}

To prove the convergence of Algorithm \ref{alg1}, we introduce an operator $T := T_1 \times T_2:\mathcal{H}^{n-1}\times\mathcal{G}\rightarrow\mathcal{H}^{n-1}\times\mathcal{G}$, which is defined by
\begin{equation}\label{fixed point algorithm--1}
T\dbinom{\bm{z}}{v} = \dbinom{T_1 \bm{z}}{T_2 v} = \dbinom{\bm{z}}{v} + \lambda\left(
\begin{matrix}
x_{2}-x_{1}\\
x_{3}-x_{2}\\
\vdots\\
x_{n}-x_{n-1}\\
\gamma(y-Lx_{n})
\end{matrix}
\right)
\end{equation}
where $(\bm{x},y)= (x_{1},\cdots,x_{n-1},y)\in \mathcal{H}^{n-1}\times\mathcal{G}$ depends on $(\bm{z},v)= (z_{1},\cdots,z_{n-1},v)\in \mathcal{H}^{n-1}\times\mathcal{G}$ and is described by
\begin{equation}\label{fixed point algorithm--2}
\left\{
\begin{aligned}
& x_{1}=J_{ \alpha A_{1}}(z_{1})\\
& x_{i}=J_{ \alpha A_{i}}(z_{i} + x_{i-1} - z_{i-1}- \alpha C_{i-1}x_{i-1}^{k}), \;\forall i\in [2,n-1]\\
& x_{n}=J_{ \alpha A_{n}}(x_{1} + x_{n-1} - z_{n-1}- \alpha L^{\ast}(\gamma Lx_{1}-v)-\alpha C_{n-1}x_{n-1})\\
& y=J_{\frac{B}{\gamma}}\left(L(x_{1} + x_{n}) - \frac{v}{\gamma}\right)
\end{aligned}
\right.
\end{equation}
It is easy to see that the sequence $\{(\bm{z}^{k},v^{k})\}$ generated by Algorithms \ref{alg1} satisfies
$$
(\bm{z}^{k+1}, v^{k+1}) = T(\bm{z}^{k}, v^k) = (T_1 \bm{z}^{k}, T_2 v^k ), \quad \forall k\geq 0.
$$


The following lemma establishes a connection between the solution sets of the primal and dual problems (\ref{primal problem})-(\ref{dual problem}) and the set of fixed points of the operator $T$.

\begin{lemma}\label{the lemma of two nonempty solution set}
Let $n\geq 2$ and $\lambda,\gamma >0$. Then the following statements hold:

\emph{(i)} If $(\bar{x},\bar{u})\in \bm{Z}$, then there exists $\bar{\bm{z}}=(\bar{z}_{1},\cdots,\bar{z}_{n-1})\in \mathcal{H}^{n-1}$ such that $(\bar{\bm{z}},\gamma L\bar{x}-\bar{u})\in \emph{Fix}\;T$.

\emph{(ii)} If $(\bar{z}_1,\cdots,\bar{z}_{n-1},\bar{v})\in \emph{Fix}\;T$, then $(J_{\alpha A_1}(\bar{z}_1),\gamma L\bar{x}-\bar{v})\in \bm{Z}$.
Hence, $\emph{Fix}\;T\neq \emptyset \Leftrightarrow \bm{Z}\neq \emptyset$.
\end{lemma}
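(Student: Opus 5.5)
The plan is to characterise $\operatorname{Fix}T$ directly from the defining relations (\ref{fixed point algorithm--1})--(\ref{fixed point algorithm--2}), using only the resolvent identity $p=J_{\alpha A}(w)\Leftrightarrow w-p\in\alpha Ap$ (and likewise for $J_{B/\gamma}$), and no convergence machinery. Since $\lambda>0$ and $\gamma>0$, a point $(\bm z,v)$ is fixed by $T$ if and only if
\[
x_1=x_2=\cdots=x_n \quad\text{and}\quad y=Lx_n .
\]
Both directions then reduce to translating between these equalities and the membership relations that define $\bm Z$ in (\ref{the definition of Z}).

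For (ii), I take $(\bar z_1,\dots,\bar z_{n-1},\bar v)\in\operatorname{Fix}T$ and let $\bar x$ be the common value $x_1=\cdots=x_n$, so that $\bar x=J_{\alpha A_1}(\bar z_1)$. Substituting $x_i=\bar x$ into each resolvent step gives
\[
\bar z_1-\bar x\in\alpha A_1\bar x,\qquad
\bar z_i-\bar z_{i-1}-\alpha C_{i-1}\bar x\in\alpha A_i\bar x\quad (2\le i\le n-1),
\]
\[
\bar x-\bar z_{n-1}-\alpha L^*(\gamma L\bar x-\bar v)-\alpha C_{n-1}\bar x\in\alpha A_n\bar x .
\]
Summing these $n$ inclusions makes the $\bar z_i$ telescope away. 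The result is
\[
-\alpha L^*(\gamma L\bar x-\bar v)-\alpha\sum_{k=1}^{n-1}C_k\bar x\in\alpha\sum_{i=1}^n A_i\bar x .
\]
Setting $\bar u:=\gamma L\bar x-\bar v$ and dividing by $\alpha$ yields the first condition in $\bm Z$. For the dual part, the equation $y=L\bar x$ means $L\bar x=J_{B/\gamma}(2L\bar x-\bar v/\gamma)$. This is equivalent to $L\bar x-\bar v/\gamma\in\gamma^{-1}B(L\bar x)$, that is, $\bar u\in B(L\bar x)$. Hence $(\bar x,\bar u)\in\bm Z$. When $n=2$ the middle family of inclusions is empty, and the argument is unchanged.

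For (i), I reverse this construction. Given $(\bar x,\bar u)\in\bm Z$, I pick $a_i\in A_i\bar x$ with
\[
\sum_{i=1}^n a_i=-L^*\bar u-\sum_{k=1}^{n-1}C_k\bar x .
\]
Concretely, I choose $a_1,\dots,a_{n-1}$ arbitrarily in $A_i\bar x$; the first condition in $\bm Z$ then lets $a_n$ be determined so that it lies in $A_n\bar x$. I then define recursively
\[
\bar z_1:=\bar x+\alpha a_1,\qquad \bar z_i:=\bar z_{i-1}+\alpha a_i+\alpha C_{i-1}\bar x\quad (2\le i\le n-1),
\]
together with $\bar v:=\gamma L\bar x-\bar u$. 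By induction each step of (\ref{fixed point algorithm--2}) returns $x_i=\bar x$: the argument of $J_{\alpha A_i}$ equals $\bar x+\alpha a_i$, and $J_{\alpha A_i}(\bar x+\alpha a_i)=\bar x$.

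For the last primal step, the argument of $J_{\alpha A_n}$ is $2\bar x-\bar z_{n-1}-\alpha L^*\bar u-\alpha C_{n-1}\bar x$. Expanding $\bar z_{n-1}=\bar x+\alpha\sum_{i<n}a_i+\alpha\sum_{k<n-1}C_k\bar x$, this equals $\bar x+\alpha a_n$ by the choice of $a_n$, so $x_n=\bar x$. Finally, $y=J_{B/\gamma}(L\bar x+\bar u/\gamma)=L\bar x$ because $\bar u\in B(L\bar x)$. Therefore $T(\bar{\bm z},\bar v)=(\bar{\bm z},\bar v)$, which proves (i).

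The equivalence $\operatorname{Fix}T\neq\emptyset\Leftrightarrow\bm Z\neq\emptyset$ follows immediately from (i) and (ii). The only delicate point is the index bookkeeping in the telescoping sum and in the expansion of $\bar z_{n-1}$. In particular, one must check that the $C_{n-1}$ term, which enters only in the final step, is accounted for exactly once.
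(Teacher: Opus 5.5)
Your proof follows the paper's proof essentially line for line. For (ii) you read off $x_1=\cdots=x_n=\bar x$ and $y=L\bar x$ from the fixed-point equation and then telescope the $n$ resolvent inclusions. For (i) you use the same recursive definition $\bar z_1=\bar x+\alpha a_1$, $\bar z_i=\bar z_{i-1}+\alpha a_i+\alpha C_{i-1}\bar x$, $\bar v=\gamma L\bar x-\bar u$. Your bookkeeping for $\bar z_{n-1}$ and for the $C_{n-1}$ term is correct.

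There is one false sentence, however. In (i) you say you may choose $a_1,\dots,a_{n-1}$ \emph{arbitrarily} in $A_1\bar x,\dots,A_{n-1}\bar x$. You then claim the first condition in $\bm Z$ guarantees $a_n:=-L^*\bar u-\sum_k C_k\bar x-\sum_{i<n}a_i\in A_n\bar x$. This fails as soon as some $A_i\bar x$ with $i<n$ has more than one element.

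Here is a counterexample. Take $\mathcal H=\mathcal G=\mathbb R$, $n=2$, $A_1=\partial\delta_{[0,+\infty)}$, $A_2=0$, $C_1=0$, $L=0$, $B=0$. Then $(\bar x,\bar u)=(0,0)\in\bm Z$, because $0\in(-\infty,0]+\{0\}$. Choosing $a_1=-1\in A_1 0$ forces $a_2=1\notin A_2 0=\{0\}$. With $\bar z_1=-\alpha$ you get $x_1=0$ but $x_2=J_{\alpha A_2}(\bar x+\alpha a_2)=\alpha\neq x_1$, so the constructed point is not fixed by $T$.

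The first condition in $\bm Z$ is membership in the Minkowski sum $\sum_i A_i\bar x$. By definition this provides a whole tuple $(a_1,\dots,a_n)\in A_1\bar x\times\cdots\times A_n\bar x$ with $\sum_i a_i=-L^*\bar u-\sum_k C_k\bar x$. The $a_i$ must be taken jointly from such a decomposition, not chosen one at a time. Your preceding sentence already states this correctly, so deleting the ``Concretely'' remark repairs the argument, and everything else goes through as written.
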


\begin{proof}
(i) Let $(\bar{x},\bar{u})\in \bm{Z}$ and take $a_{i}\in \mathcal{H}$ such that  $a_{i}\in A_{i}\bar{x}$ for all $i\in [1,n]$. It follows from the definition of $\bm{Z}$ that $\bar{u}\in B(L\bar{x})$ and $-L^{\ast}\bar{u}-\sum_{k=1}^{n-1}C_k\bar{x}=\sum_{i=1}^{n}a_{i}$. We further define the vectors $(\bar{z}_1,\cdots,\bar{z}_{n-1},\bar{v})\in \mathcal{H}^{n-1}\times \mathcal{G}$ by
\begin{equation}
\left\{
\begin{aligned}
& \bar{z}_{1}:=\bar{x}+\alpha a_{1}\in (I+ \alpha A_{1})(\bar{x})\\
& \bar{z}_{i}:= \alpha a_{i}+\bar{z}_{i-1} + \alpha C_{i-1}\bar{x} \in (\text{Id}+ \alpha A_{i})(\bar{x})-\bar{x}+\bar{z}_{i-1}
+ \alpha C_{i-1}\bar{x}, \;\forall i\in [2,n-1]\\
& \bar{v}:=\gamma L\bar{x}-\bar{u}\in (\gamma I -B)(L\bar{x}),
\end{aligned}
\right.
\end{equation}
from which we obtain that $\bar{x}=J_{ \alpha A_{1}}(\bar{z}_{1})$ and $\bar{x}=J_{ \alpha A_{i}}(\bar{z}_{i} + \bar{x} - \bar{z}_{i-1} - \alpha C_{i-1}\bar{x})$ for all $i\in [2,n-1]$. Moreover, we deduce that
\begin{equation}
\begin{aligned}
& \quad 2\bar{x}-\bar{z}_{n-1}- \alpha L^{\ast}(\gamma L\bar{x}-\bar{v})- \alpha C_{n-1}\bar{x} \\
& = 2\bar{x}-\bar{z}_{n-1}- \alpha L^{\ast}\bar{u}- \alpha C_{n-1}\bar{x}\\
&= \bar{x}+\alpha a_{n}+\bar{x}-\bar{z}_{n-1}+\alpha \sum_{i=1}^{n-1}a_{i}+ \alpha \sum_{k=1}^{n-2}C_{k}\bar{x}\\
&= \bar{x}+\alpha a_{n}+\bar{x}-\bar{z}_{n-1}+\sum_{i=2}^{n-1}(\bar{z}_{i}-\bar{z}_{i-1})+\bar{z}_{1}-\bar{x}\\
& \in (\text{Id}+ \alpha A_{n})(\bar{x}).
\end{aligned}
\end{equation}
Altogether, it yields
\begin{equation}
\left\{
\begin{aligned}
& \bar{x}=J_{ \alpha A_{1}}(\bar{z}_{1})\\
& \bar{x}=J_{ \alpha A_{i}}(\bar{z}_{i} + \bar{x} - \bar{z}_{i-1} - \alpha C_{i-1}\bar{x}), \;\forall i\in [2,n-1]\\
& \bar{x}=J_{ \alpha A_{n}}(2\bar{x} - \bar{z}_{n-1}- \alpha L^{\ast}(\gamma L\bar{x}-\bar{v})-\alpha C_{n-1}\bar{x})\\
& L\bar{x}=J_{\frac{B}{\gamma}}\left(2L\bar{x} - \frac{\bar{v}}{\gamma}\right),
\end{aligned}
\right.
\end{equation}
which implies that $(\bar{\bm{z}},\gamma L\bar{x}-\bar{u})\in \text{Fix}\;T$.

(ii) Let $(\bar{z}_1,\cdots,\bar{z}_{n-1},\bar{v})\in \text{Fix}\;T$ and $\bar{x}=J_{ \alpha A_{1}}(\bar{z}_{1})$. It follows from (\ref{fixed point algorithm--1}) that $\bar{y}=L\bar{x}$ and $\bar{x}_{i}=\bar{x}$ for all $i\in [1,n]$. In this way, (\ref{fixed point algorithm--2}) can be rewritten in the following form:
\begin{equation}\label{the inclusion of fixed point}
\left\{
\begin{aligned}
& \bar{z}_{1}-\bar{x}\in \alpha A_{1}\bar{x}\\
& \bar{z}_{i}-\bar{z}_{i-1} - \alpha C_{i-1}\bar{x}\in \alpha A_{i}\bar{x}, \;\forall i\in [2,n-1]\\
& \bar{x}-\bar{z}_{n-1}- \alpha L^{\ast}(\gamma L\bar{x}-\bar{v})- \alpha C_{n-1}\bar{x}\in { \alpha A_{n}}\bar{x}\\
& \gamma L\bar{x}-\bar{v}\in B(L\bar{x}),
\end{aligned}
\right.
\end{equation}
Summing the first $n$ inclusions in (\ref{the inclusion of fixed point}) and assuming $\bar{u}:=\gamma L\bar{x}-\bar{v}$, we have
\begin{equation}
\left\{
\begin{aligned}
& -L^{\ast}\bar{u} \in \sum_{i=1}^{n}A_{i}\bar{x}+\sum_{k=1}^{n-1}C_{k}\bar{x}\\
& \bar{u}\in B(L\bar{x}),
\end{aligned}
\right.
\end{equation}
which implies that $(J_{\alpha A_1}(\bar{z}_1),\gamma L\bar{x}-\bar{v})\in \bm{Z}$. Hence, the result follows from (i) and (ii).
\end{proof}

In what follows, we present a technical lemma concerning the nonexpansive properties of the fixed-point operator $T$. We begin by defining a scalar product on the real Hilbert space $\mathcal{H}^{n-1}\times \mathcal{G}$ as
\begin{equation}\label{new inner product}
\langle ({z}_1,\cdots,{z}_{n-1},{v}),(\bar{z}_1,\cdots,\bar{z}_{n-1},\bar{v})\rangle_{\frac{\alpha}{\gamma}}:=\sum_{i=1}^{n-1}\langle z_{i},\bar{z}_{i}\rangle_{\mathcal{H}}+ \frac{\alpha}{\gamma} \langle v,\bar{v}\rangle_{\mathcal{G}},
\end{equation}
for all $({z}_1,\cdots,{z}_{n-1},{v}),(\bar{z}_1,\cdots,\bar{z}_{n-1},\bar{v})\in \mathcal{H}^{n-1}\times \mathcal{G}$.

\begin{lemma}\label{the lemma of nonexpansion}
Let $(\bm{\bar{z}},\bar{v})=(\bar{z}_1,\cdots,\bar{z}_{n-1},\bar{v}) \in \mathcal{H}^{n-1}\times \mathcal{G}$, and $(\bm{z},{v})=({z}_1,\cdots,{z}_{n-1},{v})\in \mathcal{H}^{n-1}\times \mathcal{G}$. Then, we have
\begin{equation}\label{the inequality of nonexpansion}
\begin{aligned}
& \quad \|T(\bm{z},v)-T(\bm{\bar{z}},\bar{v})\|_{\frac{\alpha}{\gamma}}^{2}  \\
&\leq \|(\bm{z},v)-(\bm{\bar{z}},\bar{v})\|_{\frac{\alpha}{\gamma}}^{2}
- \frac{1-\lambda-\frac{\alpha\beta}{2}}{\lambda}
\|(\emph{I}-T_1)\bm{z}-(\emph{I}-T_1)\bm{\bar{z}}\|^2 \\
&\quad
- \frac{1-\lambda}{\lambda}\frac{\alpha}{\gamma}
\|(\emph{I}-T_2)v-(\emph{I}-T_2)\bar{v}\|^2,
\end{aligned}
\end{equation}
where $\|\cdot\|_{\frac{\alpha}{\gamma}}$ denotes the norm induced by the product (\ref{new inner product}) and $\beta = \max\{\beta_k\}_{k=1}^{n-1}$. In particular, if $\alpha\in (0,\frac{2}{\beta}), \gamma\in (0,\frac{1-\frac{1}{2}\alpha\beta}{\alpha\|L\|^2})$ and $\lambda\in (0,1-\frac{\alpha\beta}{2})$, the operator $T$ is $\theta$-averaged nonexpansive with
\[
\theta=\frac{\lambda}{1-\frac{\alpha\beta}{2}}\in(0,1).
\]
Furthermore, if one of the operators $A_i$ is uniformly monotone for some $i\in [1,n]$, then we obtain
\begin{equation}\label{the inequality of nonexpansion2}
\begin{aligned}
& \quad \|T(\bm{z},v)-T(\bm{\bar{z}},\bar{v})\|_{\frac{\alpha}{\gamma}}^{2}  \\
&\leq \|(\bm{z},v)-(\bm{\bar{z}},\bar{v})\|_{\frac{\alpha}{\gamma}}^{2}
- \frac{1-\lambda-\frac{\alpha\beta}{2}}{\lambda}
\|(\emph{I}-T_1)\bm{z}-(\emph{I}-T_1)\bm{\bar{z}}\|^2 \\
&\quad
- \frac{1-\lambda}{\lambda}\frac{\alpha}{\gamma}
\|(\emph{I}-T_2)v-(\emph{I}-T_2)\bar{v}\|^2
-2\lambda\phi_{A_i}(\|x_i - \bar{x}_{i}\|).
\end{aligned}
\end{equation}
\end{lemma}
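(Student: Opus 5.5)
We will follow the standard argument of Arag\'on-Artacho et al.\ for minimal-lifting resolvent splitting, adapted to the primal-dual and cocoercive terms. Fix two points $(\bm z,v)$ and $(\bar{\bm z},\bar v)$, and let $(x_1,\dots,x_n,y)$ and $(\bar x_1,\dots,\bar x_n,\bar y)$ be the associated quantities from (\ref{fixed point algorithm--2}). Write $\Delta z_i=z_i-\bar z_i$, $\Delta x_i=x_i-\bar x_i$, $\Delta v=v-\bar v$, $\Delta y=y-\bar y$.

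\textbf{Step 1 (monotonicity inequalities).} From the resolvent definitions we read off elements of the graphs:
\begin{align*}
&z_1-x_1\in\alpha A_1x_1,\qquad z_i-z_{i-1}+x_{i-1}-x_i-\alpha C_{i-1}x_{i-1}\in\alpha A_ix_i\quad (2\le i\le n-1),\\
&x_1+x_{n-1}-x_n-z_{n-1}-\alpha L^*(\gamma Lx_1-v)-\alpha C_{n-1}x_{n-1}\in\alpha A_nx_n,\\
&\gamma L(x_1+x_n)-v-\gamma y\in B y,
\end{align*}
and similarly for the barred quantities. Monotonicity of each $A_i$ gives $n$ inequalities $\langle \text{(difference of graph elements)},\Delta x_i\rangle\ge 0$. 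Monotonicity of $B$, multiplied by $\alpha/\gamma$, gives $\frac{\alpha}{\gamma}\langle \gamma L(\Delta x_1+\Delta x_n)-\Delta v-\gamma\Delta y,\Delta y\rangle\ge0$. If $A_i$ is uniformly monotone, the corresponding $0$ is replaced by $\alpha\phi_{A_i}(\|\Delta x_i\|)$. This produces the extra term in (\ref{the inequality of nonexpansion2}), after the factor bookkeeping below yields $2\lambda\phi$ (modulo the $\alpha$ normalization).

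\textbf{Step 2 (summation and telescoping).} Add all these inequalities. The goal is to rewrite the sum, using the polarization identity $2\langle a,b\rangle=\|a\|^2+\|b\|^2-\|a-b\|^2$, as
\[
\|\Delta\bm z\|^2+\tfrac{\alpha}{\gamma}\|\Delta v\|^2-\|\Delta \bm z^+\|^2-\tfrac{\alpha}{\gamma}\|\Delta v^+\|^2-\tfrac{1-\lambda}{\lambda}\big(\|\Delta(I-T_1)\|^2+\tfrac{\alpha}{\gamma}\|\Delta(I-T_2)\|^2\big)
\]
plus cocoercive and linear-coupling remainders. Here $\Delta\bm z^+ = \Delta\bm z+\lambda(\Delta x_{i+1}-\Delta x_i)_i$ and $\Delta v^+=\Delta v+\lambda\gamma(\Delta y-L\Delta x_n)$. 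The $z$-terms telescope exactly as in \cite{ArtachoCOA2023}. The cross terms $\langle \Delta v, L\Delta x_1\rangle$ from the $A_n$ inequality cancel against the $B$ term. What survives from $L$ is the combination
\[
-\alpha\gamma\langle L\Delta x_1,\Delta x_n\rangle+\alpha\langle L(\Delta x_1+\Delta x_n),\Delta y\rangle-\alpha\|\Delta y\|^2 ,
\]
which should complete to $-\alpha\|\Delta y-L\Delta x_n\|^2$ plus a term of the form $\alpha\langle L\Delta x_1, \Delta y-L\Delta x_n\rangle$.

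\textbf{Step 3 (absorbing cocoercive and $L$ remainders; main obstacle).} The cocoercive terms appear as $-\alpha\langle \Delta C_{i-1}x_{i-1},\Delta x_i\rangle$. We split $\Delta x_i=\Delta x_{i-1}+(\Delta x_i-\Delta x_{i-1})$ and use $\frac1{\beta_k}$-cocoercivity together with Young's inequality:
\[
-\alpha\langle \Delta C x,\Delta x\rangle-\alpha\langle \Delta Cx,\Delta x_i-\Delta x_{i-1}\rangle\le \tfrac{\alpha\beta}{4}\|\Delta x_i-\Delta x_{i-1}\|^2 .
\]
Since $\Delta x_{i+1}-\Delta x_i=\lambda^{-1}$ times a component of $\Delta(I-T_1)$, this reduces the coefficient $\frac{1-\lambda}{\lambda}$ to $\frac{1-\lambda-\alpha\beta/2}{\lambda}$. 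The remaining $L$ cross term is handled the same way, by Young's inequality against $\|\Delta y-L\Delta x_n\|^2$ and $\|L\|^2\|\Delta x_1-\ldots\|^2$. The step-size condition $\gamma\alpha\|L\|^2<1-\frac{\alpha\beta}{2}$ is exactly what makes the residual nonpositive. This bookkeeping of constants is the delicate step, and I expect it to need care to land exactly on the stated coefficients.

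\textbf{Conclusion.} Once (\ref{the inequality of nonexpansion}) holds, the averagedness claim follows. Use $\frac{1-\lambda}{\lambda}\ge\frac{1-\lambda-\alpha\beta/2}{\lambda}$ to bound the right-hand side by $\|\cdot\|^2-\frac{1-\theta}{\theta}\|(I-T)(\cdot)-(I-T)(\bar\cdot)\|_{\frac{\alpha}{\gamma}}^2$ with $\theta=\lambda/(1-\frac{\alpha\beta}{2})$. One checks that $\frac{1-\theta}{\theta}=\frac{1-\lambda-\alpha\beta/2}{\lambda}$, and $\theta\in(0,1)$ when $\lambda<1-\frac{\alpha\beta}{2}$. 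The characterization of averaged operators in the preliminaries then finishes the proof.
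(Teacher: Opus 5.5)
Your outline follows the paper's route: one monotonicity inequality per resolvent, summation, and polarization. Your handling of the cocoercive terms also matches the paper: you split $\Delta x_{i+1}=\Delta x_i+(\Delta x_{i+1}-\Delta x_i)$ and apply cocoercivity plus Young, which gives $\frac{\alpha\beta}{4}\sum_i\|\Delta x_{i+1}-\Delta x_i\|^2$ and hence the coefficient $\frac{1-\lambda-\alpha\beta/2}{\lambda}$. Your final averagedness computation is likewise the paper's. The gap is in the linear coupling, which is the only new ingredient of this lemma. You leave it as ``delicate bookkeeping'', and the specifics you give would not go through.

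First, the $B$-inequality must be multiplied by $\alpha$, not $\alpha/\gamma$. The $A_n$-inequality is tested against $\Delta x_n$, so it contributes $\alpha\langle\Delta v,L\Delta x_n\rangle$; there is no $\langle\Delta v,L\Delta x_1\rangle$ term. This term does not cancel. It pairs with $-\alpha\langle\Delta v,\Delta y\rangle$ to give $\alpha\langle\Delta v,L\Delta x_n-\Delta y\rangle=\frac{\alpha}{\lambda\gamma}\langle\Delta v,\Delta v-\Delta v^+\rangle$, which is the source of the $\frac{\alpha}{\gamma}$-weighted dual terms. With your factor $\alpha/\gamma$, a term $\alpha(1-\gamma^{-1})\langle\Delta v,\Delta y\rangle$ is left over that no available negative term controls.

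Second, with the correct scaling the remaining $L$-terms are
\[
\alpha\gamma\bigl[\langle L(\Delta x_1+\Delta x_n),\Delta y\rangle-\langle L\Delta x_1,L\Delta x_n\rangle-\|\Delta y\|^2\bigr]=\alpha\gamma\langle L(\Delta x_1-\Delta x_n),q\rangle-\alpha\gamma\|q\|^2,\qquad q:=\Delta y-L\Delta x_n,
\]
not $\alpha\langle L\Delta x_1,q\rangle-\alpha\|q\|^2$. The difference matters. Young applied to $\langle L\Delta x_1,q\rangle$ produces $\|L\|^2\|\Delta x_1\|^2$, and no negative multiple of $\|\Delta x_1\|^2$ survives the telescoping.

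The missing idea is which term absorbs the $L$-residual. Because the argument of $J_{\alpha A_n}$ contains $x_1$, the $A_n$-inequality carries $\langle\Delta x_1-\Delta x_n,\Delta x_n\rangle$. After polarization this leaves $-\frac12\|\Delta x_1-\Delta x_n\|^2$, a term independent of the $\|(I-T_1)\bm{z}-(I-T_1)\bar{\bm{z}}\|^2$ budget that the cocoercive terms consume. The paper bounds
$\alpha\gamma\langle L(\Delta x_1-\Delta x_n),q\rangle\le\frac{\alpha\gamma\|L\|^2}{2}\|\Delta x_1-\Delta x_n\|^2+\frac{\alpha\gamma}{2}\|q\|^2$.
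Since $q=\frac{1}{\lambda\gamma}(\Delta v^+-\Delta v)$, the leftover $-\frac{\alpha\gamma}{2}\|q\|^2$ is exactly what turns the dual coefficient into $\frac{1-\lambda}{\lambda}\frac{\alpha}{\gamma}$ after multiplying by $2\lambda$. The first piece is absorbed by $-\frac12\|\Delta x_1-\Delta x_n\|^2$ as soon as $\alpha\gamma\|L\|^2\le1$, leaving the droppable nonnegative term $\lambda(1-\alpha\gamma\|L\|^2)\|\Delta x_1-\Delta x_n\|^2$.

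So the two residuals do not compete for one budget. The condition $\alpha\gamma\|L\|^2<1-\frac{\alpha\beta}{2}$ is sufficient, not ``exactly'' what is needed. Charging the $L$-residual to $\sum_i\|\Delta x_{i+1}-\Delta x_i\|^2$ instead, via $\|\Delta x_1-\Delta x_n\|^2\le(n-1)\sum_i\|\Delta x_{i+1}-\Delta x_i\|^2$, would not give the stated constants. This also shows that the first inequality needs $\alpha\gamma\|L\|^2\le1$, although the lemma states it unconditionally.

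Your factor $\alpha\phi_{A_i}$ in the uniformly monotone case is correct. The inclusions are in $\alpha A_i$, so the extra term is $2\lambda\alpha\phi_{A_i}(\|x_i-\bar x_i\|)$, not the $2\lambda\phi_{A_i}$ the paper writes. This is harmless for the strong-convergence argument, since $\alpha\phi_{A_i}$ is again a modulus.
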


\begin{proof}
Consider the vectors $(x_{1},\cdots,x_{n},y)\in \mathcal{H}^{n}\times\mathcal{G}$ and $(\bar{x}_{1},\cdots,\bar{x}_{n},\bar{y})\in \mathcal{H}^{n}\times\mathcal{G}$ obtained from (\ref{fixed point algorithm--2}) using $(\bm{z},{v})$ and $(\bm{\bar{z}},\bar{v})$, respectively. For convenience, we denote $(\bm{z}^{+},{v}^{+})=T(\bm{z},{v})$ and $(\bm{\bar{z}}^{+},\bar{v}^{+})=T(\bm{\bar{z}},\bar{v})$. Since $z_{1}-x_{1}\in \alpha A_{1}(x_{1})$ and $\bar{z}_{1}-\bar{x}_{1}\in \alpha A_{1}(\bar{x}_{1})$, the monotonicity of $A_{1}$ implies
\begin{equation}\label{the monotonicity of A1}
0\leq \langle (z_{1}-x_{1}) - (\bar{z}_{1}-\bar{x}_{1}), x_{1}-\bar{x}_{1}\rangle.
\end{equation}

In addition, for all $i\in [2,n-1]$, it holds that $z_{i}+x_{i-1}-z_{i-1}-x_{i}- \alpha C_{i-1}(x_{i-1})\in \alpha A_{i}x_{i}$ and $\bar{z}_{i}+\bar{x}_{i-1}-\bar{z}_{i-1}-\bar{x}_{i}- \alpha C_{i-1}\bar{x}_{i-1}\in \alpha A_{i}\bar{x}_{i}$. Moreover, it follows from the monotonicity of $A_{i}$ that
\begin{equation}\label{the monotonicity of Ai}
\begin{aligned}
0&\leq \langle (z_{i}+x_{i-1}-z_{i-1}-x_{i}- \alpha C_{i-1}x_{i-1}) - (\bar{z}_{i}+\bar{x}_{i-1}-\bar{z}_{i-1}-\bar{x}_{i}- \alpha C_{i-1}\bar{x}_{i-1}), x_{i}-\bar{x}_{i}\rangle\\
&= \langle (z_{i}-x_{i}) - (\bar{z}_{i}-\bar{x}_{i}), x_{i}-\bar{x}_{i}\rangle - \langle (z_{i-1}-x_{i-1}) - (\bar{z}_{i-1}-\bar{x}_{i-1}), x_{i}-\bar{x}_{i} \rangle\\
&~~~~- \alpha \langle C_{i-1}x_{i-1} - C_{i-1}\bar{x}_{i-1}, x_{i}-\bar{x}_{i}\rangle .
\end{aligned}
\end{equation}
From the second-to-last line in (\ref{fixed point algorithm--2}), we have $x_{1}+x_{n-1}-z_{n-1}-x_{n}-\alpha L^{\ast}(\gamma Lx_{1}-v)- \alpha C_{n-1}x_{n-1}\in \alpha A_{n}x_{n}$ and $\bar{x}_{1}+\bar{x}_{n-1}-\bar{z}_{n-1}-\bar{x}_{n}- \alpha L^{\ast}(\gamma L\bar{x}_{1}-\bar{v})- \alpha C_{n-1}\bar{x}_{n-1}\in \alpha A_{n}\bar{x}_{n}$. Moreover, the monotonicity of $A_{n}$ implies
\begin{equation}\label{the monotonicity of An}
\begin{aligned}
0\leq \;& \langle x_{1}+x_{n-1}-z_{n-1}-x_{n}-\alpha L^{\ast}(\gamma Lx_{1}-v)-\alpha C_{n-1}x_{n-1}, x_{n}-\bar{x}_{n}\rangle \\
& - \langle \bar{x}_{1}+\bar{x}_{n-1}-\bar{z}_{n-1}-\bar{x}_{n}- \alpha L^{\ast}(\gamma L\bar{x}_{1}-\bar{v})- \alpha C_{n-1}\bar{x}_{n-1}, x_{n}-\bar{x}_{n}\rangle\\
= \;& \langle (x_{n-1}-z_{n-1})-(\bar{x}_{n-1}-\bar{z}_{n-1}), x_{n}-\bar{x}_{n}\rangle + \langle (x_{1}-\bar{x}_{1})-({x}_{n}-\bar{x}_{n}), x_{n}-\bar{x}_{n}\rangle\\
& - \alpha \langle \gamma(Lx_{1}-L\bar{x}_{1})-(v-\bar{v}), Lx_{n}-L\bar{x}_{n}\rangle - \alpha \langle C_{n-1}x_{n-1}-C_{n-1}\bar{x}_{n-1}, x_{n}-\bar{x}_{n}\rangle .
\end{aligned}
\end{equation}
Finally, the last line in (\ref{fixed point algorithm--2}) gives $\gamma L(x_{1}+x_{n})-v-\gamma y\in By$ and $\gamma L(\bar{x}_{1}+\bar{x}_{n})-\bar{v}-\gamma \bar{y}\in B\bar{y}$. By the monotonicity of $B$, we have
\begin{equation}\label{the monotonicity of B}
\begin{aligned}
0\leq \langle (\gamma L(x_{1}+x_{n})-v-\gamma y) - (\gamma L(\bar{x}_{1}+\bar{x}_{n})-\bar{v}-\gamma \bar{y}), y-\bar{y}\rangle .
\end{aligned}
\end{equation}
By summing (\ref{the monotonicity of A1})-(\ref{the monotonicity of B}), we obtain the inequality
\begin{equation}\label{the inequality--1}
\begin{aligned}
0\leq &\sum_{i=1}^{n-1}\langle (x_{i}-x_{i+1}) - (\bar{x}_{i}-\bar{x}_{i+1}),z_{i}-\bar{z}_{i} \rangle +\sum_{i=1}^{n-1}\langle (x_{i}-\bar{x}_{i}) - (x_{i+1}-\bar{x}_{i+1}),\bar{x}_{i}-x_{i} \rangle \\
&+ \langle (x_{1}-\bar{x}_{1}) - (x_{n}-\bar{x}_{n}),{x}_{n}-\bar{x}_{n} \rangle + \alpha \langle (Lx_{n}-L\bar{x}_{n}) - (y-\bar{y}),{v}-\bar{v} \rangle \\
&+ \alpha\gamma \langle (L(x_{1}+x_{n}) - L(\bar{x}_{1}+\bar{x}_{n}))-(y-\bar{y}), y-\bar{y}\rangle - \alpha\gamma\langle Lx_{1}-L\bar{x}_{1},Lx_{n}-L\bar{x}_{n}\rangle \\
&-\alpha\sum_{i=1}^{n-1}\langle C_{i}x_{i}-C_{i}\bar{x}_{i}, x_{i+1}-\bar{x}_{i+1}\rangle .
\end{aligned}
\end{equation}
The first term in (\ref{the inequality--1}) can be written as
\begin{equation}\label{the first term}
\begin{aligned}
& \quad \sum_{i=1}^{n-1}\langle (x_{i}-x_{i+1}) - (\bar{x}_{i}-\bar{x}_{i+1}),z_{i}-\bar{z}_{i} \rangle \\
&=\frac{1}{\lambda}\sum_{i=1}^{n-1}\langle (z_{i}-z_{i}^{+}) - (\bar{z}_{i}-\bar{z}_{i}^{+}),{z}_{i}-\bar{z}_{i} \rangle \\
&=\frac{1}{\lambda}\langle (\bm{z}-\bm{z}^{+}) - (\bm{\bar{z}}-\bm{\bar{z}}^{+}),\bm{z}-\bm{\bar{z}} \rangle \\
&=\frac{1}{2\lambda}(\|(\bm{z}-\bm{z}^{+}) - (\bm{\bar{z}}-\bm{\bar{z}}^{+})\|^2 - \|\bm{z}^{+} - \bm{\bar{z}}^{+}\|^2 + \|\bm{z}-\bm{\bar{z}}\|^2).
\end{aligned}
\end{equation}
The second term in (\ref{the inequality--1}) can be equivalently expressed as
\begin{equation}\label{the second term}
\begin{aligned}
& \quad \sum_{i=1}^{n-1}\langle (x_{i}-\bar{x}_{i}) - (x_{i+1}-\bar{x}_{i+1}),\bar{x}_{i}-x_{i} \rangle \\
&=\frac{1}{2}\sum_{i=1}^{n-1}(\|x_{i+1}-\bar{x}_{i+1}\|^2 - \|{x}_{i}-\bar{x}_{i}\|^2 - \|(x_{i}-x_{i+1}) - (\bar{x}_{i}-\bar{x}_{i+1})\|^2) \\
&=\frac{1}{2}(\|x_{n}-\bar{x}_{n}\|^2 - \|{x}_{1}-\bar{x}_{1}\|^2 - \frac{1}{{\lambda}^2}\sum_{i=1}^{n-1}\|(z_{i}-z_{i}^{+}) - (\bar{z}_{i}-\bar{z}_{i}^{+})\|^2) \\
&=\frac{1}{2}(\|x_{n}-\bar{x}_{n}\|^2 - \|{x}_{1}-\bar{x}_{1}\|^2 - \frac{1}{{\lambda}^2}\|(\bm{z}-\bm{z}^{+}) - (\bm{\bar{z}}-\bm{\bar{z}}^{+})\|^2).
\end{aligned}
\end{equation}
The third term in (\ref{the inequality--1}) yields
\begin{equation}\label{the third term}
\begin{aligned}
& \quad \langle (x_{1}-\bar{x}_{1}) - (x_{n}-\bar{x}_{n}),{x}_{n}-\bar{x}_{n} \rangle\\
&=\frac{1}{2}(\|x_{1}-\bar{x}_{1}\|^2 - \|x_{n}-\bar{x}_{n}\|^2 - \|(x_{1}-\bar{x}_{1}) - (x_{n}-\bar{x}_{n})\|^2).
\end{aligned}
\end{equation}
The fourth term in (\ref{the inequality--1}) becomes
\begin{equation}\label{the fourth term}
\begin{aligned}
& \quad \langle (Lx_{n}-L\bar{x}_{n}) - (y-\bar{y}),{v}-\bar{v} \rangle \\
&=\frac{1}{\gamma\lambda} \langle ({v}-{v}^{+}) - (\bar{v}-\bar{v}^{+}),{v}-\bar{v} \rangle \\
&=\frac{1}{2\gamma\lambda}(\|({v}-{v}^{+}) - (\bar{v}-\bar{v}^{+})\|^2 - \|{v}^{+} - {\bar{v}}^{+}\|^2 + \|{v}-\bar{v}\|^2).
\end{aligned}
\end{equation}
The fifth term in (\ref{the inequality--1}) can be estimated as
\begin{equation}\label{the fifth term}
\begin{aligned}
& \quad \gamma \langle (L(x_{1}+x_{n}) - L(\bar{x}_{1}+\bar{x}_{n}))-(y-\bar{y}), y-\bar{y}\rangle \\
&=\gamma\langle Lx_{1} - L\bar{x}_{1}, y-\bar{y}\rangle + \gamma\langle (Lx_{n} - L\bar{x}_{n})-(y-\bar{y}), y-\bar{y}\rangle \\
&\leq \frac{\gamma}{2}(\|Lx_{1} - L\bar{x}_{1}\|^2+\|y-\bar{y}\|^2) + \frac{\gamma}{2}(\|Lx_{n} - L\bar{x}_{n}\|^2 -\|(Lx_{n} - L\bar{x}_{n})-(y-\bar{y})\|^2-\|y-\bar{y}\|^2) \\
&=\frac{\gamma}{2}\|Lx_{1} - L\bar{x}_{1}\|^2 + \frac{\gamma}{2}\|Lx_{n} - L\bar{x}_{n}\|^2 - \frac{1}{2\gamma\lambda^2}\|({v}-{v}^{+}) - (\bar{v}-\bar{v}^{+})\|^2 .
\end{aligned}
\end{equation}
By the Lipschitz property of $L$, the second last term in (\ref{the inequality--1}) can be expressed as
\begin{equation}\label{the second last term}
\begin{aligned}
& \quad -\gamma \langle  Lx_{1}-L\bar{x}_{1},Lx_{n}-L\bar{x}_{n}\rangle \\
& =\frac{\gamma}{2}(\|L(x_{1}-x_{n})-L(\bar{x}_{1}-\bar{x}_{n})\|^2 - \|Lx_{1}-L\bar{x}_{1}\|^2 - \|Lx_{n}-L\bar{x}_{n}\|^2)\\
&\leq \frac{\gamma\|L\|^2}{2}\|(x_{1}-x_{n})-(\bar{x}_{1}-\bar{x}_{n})\|^2
-\frac{\gamma}{2}\|Lx_{1}-L\bar{x}_{1}\|^2
-\frac{\gamma}{2}\|Lx_{n}-L\bar{x}_{n}\|^2 .
\end{aligned}
\end{equation}
Finally, using the cocoercivity property of the operator $C_i$, the last term in (\ref{the inequality--1}) can be estimated as
\begin{equation}\label{the last term}
\begin{aligned}
& \quad -\sum_{i=1}^{n-1}\langle C_{i}x_{i}-C_{i}\bar{x}_{i}, x_{i+1}-\bar{x}_{i+1}\rangle \\
&=\sum_{i=1}^{n-1}\langle C_{i}x_{i}-C_{i}\bar{x}_{i}, (\bar{x}_{i+1}-\bar{x}_{i})-(x_{i+1}-x_i)\rangle \\
&~~~~+\sum_{i=1}^{n-1}\langle C_{i}x_{i}-C_{i}\bar{x}_{i}, \bar{x}_{i}-x_{i}\rangle \\
& \leq \sum_{i=1}^{n-1}\langle C_{i}x_{i}-C_{i}\bar{x}_{i}, (\bar{x}_{i+1}-\bar{x}_{i})-(x_{i+1}-x_i)\rangle
-\sum_{i=1}^{n-1} \frac{1}{\beta_i} \| C_{i}x_{i}-C_{i}\bar{x}_{i} \|^2 \\
&\leq \frac{\beta}{4}\sum_{i=1}^{n-1}\| (\bar{x}_{i+1}-\bar{x}_{i})-(x_{i+1}-x_i)\|^2
+ \frac{1}{\beta}\sum_{i=1}^{n-1}\| C_{i}x_{i}-C_{i}\bar{x}_{i}\|^2 \\
&~~~~-\frac{1}{\beta}\sum_{i=1}^{n-1}\| C_{i}x_{i}-C_{i}\bar{x}_{i}\|^2 \\
&= \frac{\beta}{4}\sum_{i=1}^{n-1}\| (\bar{x}_{i+1}-\bar{x}_{i})-(x_{i+1}-x_i)\|^2 \\
&= \frac{\beta}{4\lambda^2}\|(\bm{z}-\bm{z}^{+})-(\bm{\bar{z}}-\bm{\bar{z}}^{+})\|^2.
\end{aligned}
\end{equation}
Substituting (\ref{the first term})-(\ref{the last term}) into the inequality (\ref{the inequality--1}) and multiplying by $2\lambda$, it follows that
\begin{equation}\label{the inequality--2}
\begin{aligned}
0\leq &\|(\bm{z}-\bm{z}^{+}) - (\bm{\bar{z}}-\bm{\bar{z}}^{+})\|^2 - \|\bm{z}^{+} - \bm{\bar{z}}^{+}\|^2 + \|\bm{z}-\bm{\bar{z}}\|^2 \\
&- \frac{1}{{\lambda}}\|(\bm{z}-\bm{z}^{+}) - (\bm{\bar{z}}-\bm{\bar{z}}^{+})\|^2 - \lambda\|(x_{1}-\bar{x}_{1}) - (x_{n}-\bar{x}_{n})\|^2 \\
& + \frac{\alpha}{\gamma}(\|({v}-{v}^{+}) - (\bar{v}-\bar{v}^{+})\|^2 - \|{v}^{+} - {\bar{v}}^{+}\|^2 + \|{v}-\bar{v}\|^2)\\
& -\frac{\alpha}{\gamma\lambda}\|({v}-{v}^{+}) - (\bar{v}-\bar{v}^{+})\|^2
+ {\alpha\lambda\gamma\|L\|^2}\|(x_{1}-x_{n})-(\bar{x}_{1}-\bar{x}_{n})\|^2 \\
& + \frac{\alpha\beta}{2\lambda}\|(\bm{z}-\bm{z}^{+})-(\bm{\bar{z}}-\bm{\bar{z}}^{+})\|^2.
\end{aligned}
\end{equation}
Combining the two terms involving $\|(x_{1}-x_{n})-(\bar{x}_{1}-\bar{x}_{n})\|^2$ in (\ref{the inequality--2}), we get
\[
-\lambda\|(x_{1}-x_{n})-(\bar{x}_{1}-\bar{x}_{n})\|^2
+\alpha\lambda\gamma\|L\|^2\|(x_{1}-x_{n})-(\bar{x}_{1}-\bar{x}_{n})\|^2
\]
\[
=
-\lambda(1-\alpha\gamma\|L\|^2)\|(x_{1}-x_{n})-(\bar{x}_{1}-\bar{x}_{n})\|^2.
\]
Thus, after rearrangement, inequality (\ref{the inequality--2}) yields
\begin{equation}
\begin{aligned}
\quad & \|\bm{z}^{+}-\bm{\bar{z}}^{+}\|^2 + \frac{\alpha}{\gamma}\|{v}^{+} - {\bar{v}}^{+}\|^2  \\
&+ \frac{1-\lambda-\frac{\alpha\beta}{2}}{\lambda}
\|(\bm{z}-\bm{z}^{+})-(\bm{\bar{z}}-\bm{\bar{z}}^{+})\|^{2} \\
&+ \frac{\alpha}{\gamma}\frac{1-\lambda}{\lambda}
\|({v}-{v}^{+})-({\bar{v}}-{\bar{v}}^{+})\|^{2} \\
&+ \lambda(1-\alpha\gamma\|L\|^2)
\|(x_{1}-x_{n})-(\bar{x}_{1}-\bar{x}_{n})\|^2 \\
&\leq \|\bm{z}-\bm{\bar{z}}\|^2 + \frac{\alpha}{\gamma}\|v-\bar{v}\|^2.
\end{aligned}
\end{equation}
Since
\[
\gamma\in \left(0,\frac{1-\frac{1}{2}\alpha\beta}{\alpha\|L\|^2}\right)
\]
and $\alpha\in(0,\frac{2}{\beta})$, we have
\[
1-\alpha\gamma\|L\|^2>0.
\]
Hence,
\[
\lambda(1-\alpha\gamma\|L\|^2)
\|(x_{1}-x_{n})-(\bar{x}_{1}-\bar{x}_{n})\|^2\geq 0.
\]
Dropping this nonnegative term from the left-hand side gives
\begin{equation}
\begin{aligned}
\quad & \|\bm{z}^{+}-\bm{\bar{z}}^{+}\|^2 + \frac{\alpha}{\gamma}\|{v}^{+} - {\bar{v}}^{+}\|^2  \\
&+ \frac{1-\lambda-\frac{\alpha\beta}{2}}{\lambda}
\|(\bm{z}-\bm{z}^{+})-(\bm{\bar{z}}-\bm{\bar{z}}^{+})\|^{2} \\
&+ \frac{\alpha}{\gamma}\frac{1-\lambda}{\lambda}
\|({v}-{v}^{+})-({\bar{v}}-{\bar{v}}^{+})\|^{2} \\
&\leq \|\bm{z}-\bm{\bar{z}}\|^2 + \frac{\alpha}{\gamma}\|v-\bar{v}\|^2.
\end{aligned}
\end{equation}
Equivalently,
\begin{equation}
\begin{aligned}
& \quad \|T(\bm{z},v)-T(\bm{\bar{z}},\bar{v})\|_{\frac{\alpha}{\gamma}}^{2}  \\
&\leq \|(\bm{z},v)-(\bm{\bar{z}},\bar{v})\|_{\frac{\alpha}{\gamma}}^{2}
- \frac{1-\lambda-\frac{\alpha\beta}{2}}{\lambda}
\|(\emph{I}-T_1)\bm{z}-(\emph{I}-T_1)\bm{\bar{z}}\|^2 \\
&\quad
- \frac{1-\lambda}{\lambda}\frac{\alpha}{\gamma}
\|(\emph{I}-T_2)v-(\emph{I}-T_2)\bar{v}\|^2,
\end{aligned}
\end{equation}
which implies that the inequality (\ref{the inequality of nonexpansion}) holds.

Moreover, since
\[
\frac{1-\lambda-\frac{\alpha\beta}{2}}{\lambda}
\leq
\frac{1-\lambda}{\lambda},
\]
we further have
\begin{equation}
\begin{aligned}
& \quad \|T(\bm{z},v)-T(\bm{\bar{z}},\bar{v})\|_{\frac{\alpha}{\gamma}}^{2}  \\
&\leq \|(\bm{z},v)-(\bm{\bar{z}},\bar{v})\|_{\frac{\alpha}{\gamma}}^{2}
- \frac{1-\lambda-\frac{\alpha\beta}{2}}{\lambda}
\|(\emph{I}-T)(\bm{z},v)-(\emph{I}-T)(\bm{\bar{z}},\bar{v})\|_{\frac{\alpha}{\gamma}}^{2}.
\end{aligned}
\end{equation}
Let
\[
\theta=\frac{\lambda}{1-\frac{\alpha\beta}{2}}.
\]
Since $\lambda\in(0,1-\frac{\alpha\beta}{2})$, we have $\theta\in(0,1)$. Moreover,
\[
\frac{1-\theta}{\theta}
=
\frac{1-\frac{\alpha\beta}{2}-\lambda}{\lambda}
=
\frac{1-\lambda-\frac{\alpha\beta}{2}}{\lambda}.
\]
Therefore,
\begin{equation}
\begin{aligned}
& \quad \|T(\bm{z},v)-T(\bm{\bar{z}},\bar{v})\|_{\frac{\alpha}{\gamma}}^{2}  \\
&\leq \|(\bm{z},v)-(\bm{\bar{z}},\bar{v})\|_{\frac{\alpha}{\gamma}}^{2}
- \frac{1-\theta}{\theta}
\|(\emph{I}-T)(\bm{z},v)-(\emph{I}-T)(\bm{\bar{z}},\bar{v})\|_{\frac{\alpha}{\gamma}}^{2}.
\end{aligned}
\end{equation}
This proves that $T$ is $\theta$-averaged nonexpansive.

Furthermore, if one of the operators $A_i$ is uniformly monotone for some $i\in [1,n]$, then, by the definition of uniform monotonicity, the corresponding inequality among (\ref{the monotonicity of A1}), (\ref{the monotonicity of Ai}), and (\ref{the monotonicity of An}) can be strengthened by adding the term $\phi_{A_i}(\|x_i - \bar{x}_{i}\|)$ on the left-hand side. Repeating the above argument, after multiplying by $2\lambda$, we obtain the additional term $2\lambda\phi_{A_i}(\|x_i - \bar{x}_{i}\|)$ on the left-hand side of the descent inequality. Therefore, we get
\begin{equation*}
\begin{aligned}
& \quad \|T(\bm{z},v)-T(\bm{\bar{z}},\bar{v})\|_{\frac{\alpha}{\gamma}}^{2}  \\
&\leq \|(\bm{z},v)-(\bm{\bar{z}},\bar{v})\|_{\frac{\alpha}{\gamma}}^{2}
- \frac{1-\lambda-\frac{\alpha\beta}{2}}{\lambda}
\|(\emph{I}-T_1)\bm{z}-(\emph{I}-T_1)\bm{\bar{z}}\|^2 \\
&\quad
- \frac{1-\lambda}{\lambda}\frac{\alpha}{\gamma}
\|(\emph{I}-T_2)v-(\emph{I}-T_2)\bar{v}\|^2
-2\lambda\phi_{A_i}(\|x_i - \bar{x}_{i}\|),
\end{aligned}
\end{equation*}
which yields the inequality (\ref{the inequality of nonexpansion2}).
\end{proof}

\begin{theorem}\label{theorem--1}
Let the sequences $(\bm{z}^{k},v^k)_{k\in N}=(z_{1}^{k},\cdots,z_{n-1}^{k},v^k)_{k\in N}$, and $(\bm{x}^{k},y^k)_{k\in N}=(x_{1}^{k},\cdots,x_{n}^{k},y^k)_{k\in N}$ be generated by Algorithm \ref{alg1}. Assume that
\[
\alpha\in \left(0,\frac{2}{\beta}\right),\quad
\gamma\in \left(0,\frac{1-\frac{1}{2}\alpha\beta}{\alpha\|L\|^2}\right),\quad
\lambda\in \left(0,1-\frac{\alpha\beta}{2}\right),
\]
where $\beta=\max\{\beta_k\}_{k=1}^{n-1}$. Set
\[
\theta=\frac{\lambda}{1-\frac{\alpha\beta}{2}}.
\]
Then, the following assertions hold:

\emph{(i)} The sequence $(\bm{z}^{k},v^k)_{k\in N}$ converges weakly to $(\bm{\bar{z}},\bar{v})=(\bar{z}_1,\cdots,\bar{z}_{n-1},\bar{v})\in \emph{Fix}\; T$.

\emph{(ii)} The sequence $(\bm{x}^{k},y^k)_{k\in N}$ converges weakly to $(\bm{\bar{x}},L\bar{x})=(\bar{x},\cdots,\bar{x},L\bar{x})$, where $\bar{x}$ belongs to $\mathcal{P}$.

\emph{(iii)} For every $i\in [1,n]$, the sequences $(\gamma Lx_{i}^k-v^k)_{k\in N}$ converges weakly to $\gamma L\bar{x}-\bar{v}\in \mathcal{D}$.

\emph{(iv)} $
\| (\bm{z}^k, v^k) - (\bm{z}^{k+1}, v^{k+1}) \|_{\frac{\alpha}{\gamma}} = o\left(\frac{\theta}{\sqrt{\sigma_k}}\right),
$
where \( \sigma_k = \theta(1-\theta)(k+1) \).

\emph{(v)} If there exists an index $i\in [1,n]$ such that the operator $A_i$ is uniformly monotone,  then the sequence $\{x_{i}^{k}\}$ converges strongly to $\bar{x}$, where $\bar{x}$ belongs to $\mathcal{P}$.
\end{theorem}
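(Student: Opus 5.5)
The plan is to rely on Lemma \ref{the lemma of nonexpansion}. It says that under the stated parameter ranges $T$ is $\theta$-averaged in $\|\cdot\|_{\frac{\alpha}{\gamma}}$. We assume $\bm{Z}\neq\emptyset$, so $\mathrm{Fix}\,T\neq\emptyset$ by Lemma \ref{the lemma of two nonempty solution set}. The iteration $(\bm{z}^{k+1},v^{k+1})=T(\bm{z}^k,v^k)$ is then a Krasnosel'ski\u{\i}--Mann scheme.

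\textbf{Part (i).} Apply (\ref{the inequality of nonexpansion}) with $(\bm{\bar z},\bar v)\in\mathrm{Fix}\,T$. This shows the sequence is Fej\'er monotone with respect to $\mathrm{Fix}\,T$. It also gives $\sum_k\|(\bm{z}^k,v^k)-(\bm{z}^{k+1},v^{k+1})\|^2_{\frac{\alpha}{\gamma}}<\infty$, hence $(I-T)(\bm{z}^k,v^k)\to0$. By demiclosedness of $I-T$ ($T$ is nonexpansive), every weak cluster point lies in $\mathrm{Fix}\,T$. The Opial lemma then gives (i).

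\textbf{Part (iv).} This is the standard rate for averaged operators. The residual norms $\|(\bm{z}^k,v^k)-T(\bm{z}^k,v^k)\|$ are nonincreasing, because $T$ is nonexpansive. Summability of the squared residuals, which the averagedness inequality yields, gives $\sum_k\|x^k-Tx^k\|^2\le\frac{\theta}{1-\theta}\|x^0-\bar x\|^2$. Since $\sum_{j\ge \lceil k/2\rceil}\|\cdot\|^2\to0$ and the terms are monotone, we get $(k+1)\|x^k-x^{k+1}\|^2\to0$. Rearranged with the normalization $\sigma_k=\theta(1-\theta)(k+1)$, this is the stated $o(\theta/\sqrt{\sigma_k})$.

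\textbf{Parts (ii)--(iii).} This is the main obstacle, since the resolvents are not weakly continuous. Boundedness of $(\bm{z}^k,v^k)$ and nonexpansiveness of the resolvents, together with boundedness of $L$ and Lipschitz continuity of the $C_i$, give boundedness of $(\bm{x}^k,y^k)$. From $(I-T)\to0$ we get $x_{i+1}^k-x_i^k\to0$ for all $i$ and $y^k-Lx_n^k\to0$. Next, take a weak cluster point $x^*$ of $x_1^k$; along that subsequence all $x_i^k\rightharpoonup x^*$ and $y^k\rightharpoonup Lx^*$. Write the defining inclusions in the form used in the proof of Lemma \ref{the lemma of two nonempty solution set}(ii), namely $a_i^k\in \alpha A_ix_i^k$ and $b^k\in By^k$, with each $a_i^k,b^k$ explicit in $(\bm z^k,v^k,\bm x^k,y^k)$. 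Then form the operator $\mathcal{S}$ on $\mathcal{H}^n\times\mathcal{G}$ whose zeros encode $\bm{Z}$, as in the standard Brice\~no-Arias--Combettes/Arag\'on-Artacho argument. It is the sum of the maximally monotone operator $(x_1,\dots,x_n,u)\mapsto A_1x_1\times\cdots\times A_nx_n\times B^{-1}u$, a bounded skew linear operator built from $L$ and differences, and a cocoercive part from the $C_i$. It is therefore maximally monotone, and its graph is sequentially closed in weak$\times$strong topology. The quantities built from the iterates lie in $\mathrm{gra}\,\mathcal S$. Their second components tend strongly to $0$, because all differences $x_{i+1}^k-x_i^k$, $y^k-Lx_n^k$ and $C_ix_i^k-C_ix_{i+1}^k$ vanish; the first components converge weakly.

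This would identify $(x^*,\gamma Lx^*-\bar v)\in\bm Z$. It would also show $x^*=J_{\alpha A_1}(\bar z_1)$, provided one tracks the limit using the explicit fixed-point relations, or argues uniqueness via the linear combination $z_1^k-x_1^k\rightharpoonup\alpha a_1$. Hence every weak cluster point equals $\bar x:=J_{\alpha A_1}\bar z_1$, which gives (ii), and then (iii) follows from $\gamma Lx_i^k-v^k\rightharpoonup\gamma L\bar x-\bar v$. A subtlety here is that $\bar x\in\mathcal P$ and $\gamma L\bar x-\bar v\in\mathcal D$ by Lemma \ref{the lemma of two nonempty solution set}(ii).

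\textbf{Part (v).} Use (\ref{the inequality of nonexpansion2}) with $(\bm{\bar z},\bar v)$ the fixed point, for which $\bar x_i=\bar x$. Summing in $k$ gives $\sum_k\phi_{A_i}(\|x_i^k-\bar x\|)<\infty$. Hence $\phi_{A_i}(\|x_i^k-\bar x\|)\to0$. Because $\phi_{A_i}$ is increasing and vanishes only at $0$, this forces $x_i^k\to\bar x$ strongly.
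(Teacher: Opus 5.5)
\textbf{Verdict: genuine gap in part (ii); parts (i), (iv) and (v) are fine.}

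Parts (i) and (v) are the paper's arguments. For (i) the paper cites Theorem 5.15 of Bauschke--Combettes, which is exactly your Fej\'er/demiclosedness/Opial argument. Your direct proof of (iv) is a valid alternative to the paper's appeal to Matsushita's Theorem 3.1. It gives $o(1/\sqrt{k+1})$, which is the same as $o(\theta/\sqrt{\sigma_k})$ because $\theta/\sqrt{\sigma_k}=\sqrt{\theta/(1-\theta)}/\sqrt{k+1}$.

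The gap is in (ii), at the step you yourself call the main obstacle. Your operator is built from $A_1x_1\times\cdots\times A_nx_n\times B^{-1}u$ evaluated at the primal iterates $x_i^k$, plus a skew part and a cocoercive part. Its images cannot tend strongly to $0$. The first block must contain the element of $\alpha A_1x_1^k$ that the algorithm supplies, namely $z_1^k-x_1^k$. That element converges only weakly, to $\bar z_1-x^*$, which is nonzero in general. No bounded linear map of $(x_1^k,\dots,x_n^k,u^k)$ can cancel the $z^k$-terms. With both arguments and images converging only weakly, weak--strong closedness of the graph gives nothing. So neither $x^*=J_{\alpha A_1}\bar z_1$ nor uniqueness of the cluster point is ever established. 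Weak convergence of $z_1^k-x_1^k$ alone does not place the limit pair in $\mathrm{gra}\,A_1$.

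The missing idea is to dualize the first $n-1$ blocks, as the paper does following Arag\'on-Artacho et al.
\begin{itemize}
\item Use $\bm R=(\alpha A_1)^{-1}\times(\alpha(A_2+C_1))^{-1}\times\cdots\times(\alpha(A_{n-1}+C_{n-2}))^{-1}\times\alpha(A_n+C_{n-1})\times\alpha B^{-1}$.
\item Evaluate it at $w_1^k=z_1^k-x_1^k$, at $w_i^k=(z_i^k-x_i^k)-(z_{i-1}^k-x_{i-1}^k)+\alpha(C_{i-1}x_i^k-C_{i-1}x_{i-1}^k)$ for $2\le i\le n-1$, at $x_n^k$, and at $b^k=\gamma L(x_1^k+x_n^k)-v^k-\gamma y^k$.
\item Add the skew operator $\bm S$: $-\mathrm{Id}$ in column $n$ of rows $1,\dots,n-1$, $\mathrm{Id}$ in row $n$ of columns $1,\dots,n-1$, and $\alpha L^*$, $-\alpha L$ coupling block $n$ with the last block.
\end{itemize}

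Each $C_{i-1}$ must be absorbed into $A_i$ and evaluated at $x_i^k$. This is legitimate because $A_i+C_{i-1}$ is maximally monotone. It is necessary because $C_{i-1}x_{i-1}^k$ by itself has no identifiable weak limit. The resulting correction is strongly null, since $C_{i-1}$ is Lipschitz and $x_i^k-x_{i-1}^k\to0$.

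The images are then:
\begin{itemize}
\item $x_i^k-x_n^k$ for $i\le n-1$;
\item $x_1^k-x_n^k+\alpha\gamma L^*(Lx_n^k-y^k)+\alpha\sum_{i=1}^{n-1}(C_ix_{i+1}^k-C_ix_i^k)$ for block $n$;
\item $\alpha(y^k-Lx_n^k)$ for the last block.
\end{itemize}
All of these are strongly null, while the arguments converge weakly along the subsequence by (i). The factor $\alpha$ on the $B^{-1}$ block matters: the paper's displayed last component $y^k-\alpha Lx_n^k$ is not strongly null in general when $\alpha\neq1$.

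Weak--strong closedness of $\bm R+\bm S$ now yields the blockwise limit inclusions. The first block reads $\bar z_1-x^*\in\alpha A_1x^*$, i.e.\ $x^*=J_{\alpha A_1}\bar z_1$, which is the identification of the cluster point you need. Then $\bar x\in\mathcal P$ and $\gamma L\bar x-\bar v\in\mathcal D$ follow from Lemma~\ref{the lemma of two nonempty solution set}(ii), as you note.
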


\begin{proof}
(i)The sequence in (\ref{the main algorithm--1}) can be viewed as a fixed point iteration
\begin{equation}
(\bm{z}^{k+1}, v^{k+1}) = T(\bm{z}^{k}, v^k), \quad \forall k\geq 0.
\end{equation}
Note that $\lambda\in \left(0,1-\frac{\alpha\beta}{2}\right)$, $\alpha\in (0,\frac{2}{\beta})$, and $\gamma\in (0,\frac{1-\frac{1}{2}\alpha\beta}{\alpha\|L\|^2})$. By Lemma \ref{the lemma of nonexpansion}, it follows that the operator $T$ is $\theta$-averaged nonexpansive with
\[
\theta=\frac{\lambda}{1-\frac{\alpha\beta}{2}}\in(0,1).
\]
Furthermore, since $Z\neq \emptyset$ and by Lemma \ref{the lemma of two nonempty solution set}(i), we have $\text{Fix}\;T\neq \emptyset$. Therefore, by Theorem 5.15 of \cite{bauschkebook2017}, The sequence $(\bm{z}^{k},v^k)_{k\in N}$ converges weakly to $(\bm{\bar{z}},\bar{v})\in \text{Fix}\; T$, and $\lim_{k\rightarrow \infty}\|(\bm{z}^{k+1},v^{k+1})-(\bm{z}^{k},v^{k})\|_{\frac{\alpha}{\gamma}}=0$.

(ii) From (i), we know that the sequence $(\bm{z}^{k},v^k)_{k\in N}$ is bounded. By the Lipschitz continuity of $C$, the boundedness of $L$, and the nonexpansivity of the resolvents, it follows that $(\bm{x}^{k},y^k)_{k\in N}$ is also bounded. Moreover, since $\lim_{k\rightarrow \infty}\|(\bm{z}^{k+1},v^{k+1})-(\bm{z}^{k},v^{k})\|_{\frac{\alpha}{\gamma}}=0$, together with (\ref{the main algorithm--1}), we obtain
\begin{equation}
\lim_{k\rightarrow \infty}\|y^k-Lx_{n}^{k}\|=0 \; \text{and} \lim_{k\rightarrow \infty}\|x_{i+1}^{k}-x_{i}^{k}\|=0 ,\; \text{for all}\; i\in [1,n-1].
\end{equation}
By the definition of resolvent operator, (\ref{the main algorithm--2}) can be written as the following inclusion
\begin{equation}\label{the last inclusion}
\footnotesize
\left(
\begin{matrix}
x_{1}^{k}-x_{n}^{k}\\
x_{2}^{k}-x_{n}^{k}\\
\vdots\\
x_{n-1}^{k}-x_{n}^{k}\\
x_{1}^{k}-x_{n}^{k}+ \alpha \gamma L^{\ast}(Lx_{n}^{k}-y^k)+ \alpha\sum_{i=1}^{n-1}c_{i+1}^k\\
y^{k}- \alpha Lx_{n}^{k}
\end{matrix}
\right)
\in (R+S)
\left(
\begin{matrix}
z_{1}^{k}-x_{1}^{k}\\
(z_{2}^{k}-x_{2}^{k})-(z_{1}^{k}-x_{1}^{k}) + \alpha c_2^k\\
\vdots\\
(z_{n-1}^{k}-x_{n-1}^{k})-(z_{n-2}^{k}-x_{n-2}^{k}) + \alpha c_{n-1}^k\\
x_{n}^{k}\\
\gamma(L(x_{1}^{k}+x_{n}^{k})-y^{k})-v^{k},
\end{matrix}
\right)
\end{equation}
where $c_{i}^{k} = C_{i-1}(x_{i}^{k}) - C_{i-1}(x_{i-1}^{k})$, and the operators $R$, $S$ are defined by

$\bm{R}=\begin{bmatrix}
\left(\alpha A_{1}\right)^{-1}& 0 &0 &\cdots  & 0& 0      \\
0& \left(\alpha(A_{2} + C_{1})\right)^{-1}& 0 &\cdots & 0& 0 \\
\vdots & 0& \ddots &\vdots & \vdots& \vdots \\
0& \vdots &0 & \left(\alpha(A_{n-1} + C_{n-2})\right)^{-1} & 0& 0 \\
0 & 0 &\cdots & 0 & \alpha(A_{n}+C_{n-1}) & 0\\
0 & 0 &\cdots & 0 & 0 &   B^{-1}
\end{bmatrix}$
,

$\bm{S}=\begin{bmatrix}
0& 0 &\cdots &0  & -\text{Id}& 0      \\
0& 0& \cdots &0 & -\text{Id}& 0 \\
\vdots & \vdots& \ddots &\vdots & \vdots& \vdots \\
0& 0 &\cdots & 0 & -\text{Id}& 0 \\
\text{Id} & \text{Id} &\cdots & \text{Id} & 0 & \alpha L^{\ast}\\
0 & 0 &\cdots & 0 & -\alpha L & 0
\end{bmatrix}$
, respectively.\\
By the definition of the operators $A_{i},1\leq i \leq n$, $B$, $C_{i}, 1\leq i \leq n-1$, we know that $\bm{R}$ is maximally monotone. Since $\bm{S}$ is a skew symmetric linear operator, the operator $\bm{S}$ is monotone and Lipschitz. Thus, we know from Corollary 25.5 of \cite{bauschkebook2017} that $\bm{R}+\bm{S}$ is maximally monotone. Hence, its graph is closed in the weak-strong topology on $\mathcal{H}^{n}\times \mathcal{G}$.

Now, let $(\bm{\bar{x}},\bar{y})$ be a weak sequential cluster point of $(\bm{x}^k,y^k)_{k\in N}$, where $\bm{\bar{x}}=(\bar{x},\cdots,\bar{x})\in \mathcal{H}^{n}$ and $\bar{y}=L\bar{x}$. Taking the limit along a subsequence of $(\bm{x}^k,y^k)_{k\in N}$, (\ref{the last inclusion}) yields
\begin{equation}
\left\{
\begin{aligned}
& \bar{z}_{1}-\bar{x}\in \alpha A_{1}\bar{x} \\
& \bar{z}_{i}-\bar{z}_{i-1}\in \alpha(A_{i}+C_{i-1})(\bar{x}), \;\forall i\in [2,n-1]\\
& \bar{x}-\bar{z}_{n-1}- \alpha L^{\ast}(\gamma L\bar{x}-\bar{v})\in \alpha(A_{n}+ C_{n-1})(\bar{x})\\
& \gamma L\bar{x}-\bar{v}\in B(L\bar{x}),
\end{aligned}
\right.
\end{equation}
from which we deduce that $(\bar{x},\gamma L\bar{x}-\bar{v})\in Z$, where $\bar{x}=J_{\alpha A_1}(\bar{z}_{1})$. Hence, $(\bm{\bar{x}},\bar{y})$ is the unique weak sequential cluster point of $(\bm{x}^k,y^k)_{k\in N}$, which implies that the result holds.

(iii) From (i)-(ii), we can obtain that the sequences $(\gamma Lx_{i}^k-v^k)_{k\in N}$ converges weakly to $\gamma L\bar{x}-\bar{v}$ for every $i\in [1,n]$, and from $(\bar{x},\gamma L\bar{x}-\bar{v})\in Z$ , we know that $\gamma L\bar{x}-\bar{v}\in \mathcal{D}$.

(iv) According to Lemma \ref{the lemma of nonexpansion}, the operator \( T \) is \( \theta \)-averaged and nonexpansive, where
\[
\theta=\frac{\lambda}{1-\frac{\alpha\beta}{2}}.
\]
Applying Theorem 3.1 in~\cite{Matsushita2017} to the Krasnosel'skii--Mann iteration associated with \( T \), we obtain
\[
\| (\bm{z}^k, v^k) - (\bm{z}^{k+1}, v^{k+1}) \|_{\frac{\alpha}{\gamma}} = o\left(\frac{\theta}{\sqrt{\sigma_k}}\right),
\]
where \( \sigma_k = \theta(1-\theta)(k+1) \).

(v) Let $(\bm{\bar{z}},\bar{v})\in \emph{Fix}\;T$ be the weak limit obtained in (i). Applying inequality (\ref{the inequality of nonexpansion2}) with $(\bm{z},v)=(\bm{z}^{k},v^k)$ and $(\bm{\bar{z}},\bar{v})\in \emph{Fix}\;T$, we obtain
\[
\begin{aligned}
&\|(\bm{z}^{k+1},v^{k+1})-(\bm{\bar{z}},\bar{v})\|_{\frac{\alpha}{\gamma}}^2 \\
&\leq
\|(\bm{z}^{k},v^{k})-(\bm{\bar{z}},\bar{v})\|_{\frac{\alpha}{\gamma}}^2
-2\lambda\phi_{A_i}(\|x_i^k-\bar{x}\|).
\end{aligned}
\]
Hence,
\[
2\lambda\sum_{k=0}^{+\infty}\phi_{A_i}(\|x_i^k-\bar{x}\|)
\leq
\|(\bm{z}^{0},v^{0})-(\bm{\bar{z}},\bar{v})\|_{\frac{\alpha}{\gamma}}^2
<+\infty.
\]
Therefore,
\[
\phi_{A_i}(\|x_i^k-\bar{x}\|)\rightarrow 0.
\]
Since $\phi_{A_i}$ is increasing and vanishes only at $0$, it follows that
\[
\|x_i^k-\bar{x}\|\rightarrow 0.
\]
Thus, the sequence $\{x_{i}^{k}\}$ converges strongly to $\bar{x}$, where $\bar{x}$ belongs to $\mathcal{P}$.
\end{proof}

In the following, we consider a more general composite monotone inclusion than (\ref{problem1}) as follows:

\begin{equation}\label{primal problem-1}
\textrm{find}\quad x\in \mathcal{H}\quad\textrm{such that}\quad 0\in \sum_{i=1}^{n}A_{i}x+\sum_{j=1}^{m}L_{j}^{\ast}B_{j}L_{j}x+\sum_{k=1}^{n-1}C_{k}x,
\end{equation}
together with its dual
\begin{equation}\label{dual problem-1}
\begin{aligned}
\textrm{find}\quad & (u_{1},\cdots,u_{m})\in \mathcal{G}_{1}\times\cdots\times\mathcal{G}_{m} \quad  \\
& \textrm{such that}\quad
(\exists \;x\in\mathcal{H})
\left\{
\begin{aligned}
&-\sum_{j=1}^{m}L_{j}^{\ast}u_{j}\in \sum_{i=1}^{n}A_{i}x+\sum_{k=1}^{n-1}C_{k}x \\
&u_{j}\in B_{j}(L_{j}x),\quad j=1,\cdots,m,
\end{aligned}
\right.\\
\end{aligned}
\end{equation}
where for any $j=1, \cdots, m$, $B_{j}:\mathcal{G}_{j}\rightarrow 2^{\mathcal{G}_{j}}$ is maximally monotone on Hilbert space $\mathcal{G}_{j}$, and $L_{j}:\mathcal{H}\rightarrow \mathcal{G}_{j}$ is bounded linear operator with adjoint operator $L_{j}^{*}$, $\{A_i\}_{i=1}^{n}$ and $\{C_k\}_{k=1}^{n-1}$ are the same as (\ref{problem1}).  With the light of a standard product space reformulation, (\ref{primal problem-1}) can be viewed as a special instance of (\ref{primal problem}), which leads to the form of Algorithm \ref{alg2} and corresponding convergence theorem.

\renewcommand{\algorithmicrequire}{\textbf{Input:}}
\renewcommand{\algorithmicensure}{\textbf{Output:}}
\begin{algorithm}[htb]
\caption{Primal-dual splitting algorithm for solving monotone inclusions (\ref{primal problem-1}) and (\ref{dual problem-1}).}
\label{alg2}
\begin{algorithmic}[1]
\REQUIRE
\vskip 2mm
Let $\lambda\in (0,1)$, $\alpha\in (0,\frac{2}{\beta})$, and $\gamma\in \left(0,\frac{1-\frac{1}{2}\alpha\beta}{\alpha \sum_{j=1}^{m}\|L_j\|^2} \right)$, where $\beta = \max\{\beta_k\}_{k=1}^{n-1}$. For any given $\bm{z}^0=(z_{1}^{0},\cdots,z_{n-1}^{0})\in \mathcal{H}^{n-1}$, $\bm{v}^{0}=(v_{1}^{0},\cdots,v_{m}^{0})\in \mathcal{G}_{1}\times\cdots\times\mathcal{G}_{m}$ and for every $k\geq 0$, iterate
\begin{equation}\label{the main algorithm--3}
\dbinom{\bm{z}^{k+1}}{\bm{v}^{k+1}}= \dbinom{\bm{z}^{k}}{\bm{v}^{k}} + \lambda\left(
\begin{matrix}
x_{2}^{k}-x_{1}^{k}\\
x_{3}^{k}-x_{2}^{k}\\
\vdots\\
x_{n}^{k}-x_{n-1}^{k}\\
\gamma(y_{1}^{k}-L_{1}x_{n}^{k})\\
\vdots\\
\gamma(y_{m}^{k}-L_{m}x_{n}^{k})
\end{matrix}
\right)
\end{equation}
with
\begin{equation}\label{the main algorithm--4}
\left\{
\begin{aligned}
& x_{1}^{k}=J_{ \alpha A_{1}}(z_{1}^{k}),\\
& x_{i}^{k}=J_{ \alpha A_{i}}(z_{i}^{k} + x_{i-1}^{k} - z_{i-1}^{k} - \alpha C_{i-1}x_{i-1}^{k}), \forall i\in [2,n-1],\\
& x_{n}^{k}=J_{ \alpha A_{n}}(x_{1}^{k} + x_{n-1}^{k} - z_{n-1}^{k}- \alpha \sum_{j=1}^{m}L_{j}^{\ast}(\gamma L_{j}x_{1}^{k}-v_{j}^{k})- \alpha C_{n-1}x_{n-1}^{k}),\\
& y_{j}^{k}=J_{\frac{B_{j}}{\gamma}}\left(L_{j}(x_{1}^{k} + x_{n}^{k}) - \frac{v_{j}^{k}}{\gamma}\right), \forall j\in [1,m].
\end{aligned}
\right.
\end{equation}
Stop when a given stopping criterion is met.
\end{algorithmic}
\end{algorithm}

\begin{theorem}\label{theorem2}
Let the sequences $(\bm{z}^{k},\bm{v}^k)_{k\in N}=(z_{1}^{k},\cdots,z_{n-1}^{k},v_{1}^k,\cdots,v_{m}^{k})_{k\in N}$, and $(\bm{x}^{k},\bm{y}^k)_{k\in N}=(x_{1}^{k},\cdots,x_{n}^{k},y_{1}^k,\cdots,y_{m}^k)_{k\in N}$ be generated by Algorithm \ref{alg2}. Assume that
\[
\alpha\in \left(0,\frac{2}{\beta}\right),\quad
\gamma\in \left(0,\frac{1-\frac{1}{2}\alpha\beta}{\alpha\sum_{j=1}^{m}\|L_j\|^2}\right),\quad
\lambda\in \left(0,1-\frac{\alpha\beta}{2}\right),
\]
where $\beta=\max\{\beta_k\}_{k=1}^{n-1}$. Set
\[
\theta=\frac{\lambda}{1-\frac{\alpha\beta}{2}}.
\]
Then, the following assertions hold:

\emph{(i)} The sequence $(\bm{z}^{k},\bm{v}^k)_{k\in N}$ converges weakly to $(\bm{\bar{z}},\bar{\bm{v}})=(\bar{z}_1,\cdots,\bar{z}_{n-1},\bar{v}_1,\cdots,\bar{v}_{m})\in \mathcal{H}^{n-1}\times\mathcal{G}_{1}\times\cdots\times\mathcal{G}_{m}$.

\emph{(ii)} The sequence $(\bm{x}^{k},\bm{y}^k)_{k\in N}$ converges weakly to $(\bm{\bar{x}},\bm{L}\bar{x})=(\bar{x},\cdots,\bar{x},L_1\bar{x},\cdots,L_m\bar{x})$, where $\bar{x}$ is a solution of the primal inclusion (\ref{primal problem-1}).

\emph{(iii)} For every $i\in [1,n]$, the sequences $(\gamma L_1 x_{i}^k-v_{1}^k,\cdots,\gamma L_m x_{i}^k-v_{m}^k)_{k\in N}$ converges weakly to $(\gamma L_1 \bar{x}-\bar{v}_{1},\cdots,\gamma L_m \bar{x}-\bar{v}_{m})$, which solves the dual inclusion (\ref{dual problem-1}).

\emph{(iv)} $
\| (\bm{z}^k, \bm{v}^k) - (\bm{z}^{k+1}, \bm{v}^{k+1}) \|_{\frac{\alpha}{\gamma}} = o\left(\frac{\theta}{\sqrt{\sigma_k}}\right),
$
where \( \sigma_k = \theta(1-\theta)(k+1) \).

\emph{(v)} If there exists an index $i\in [1,n]$ such that the operator $A_i$ is uniformly monotone,  then the sequence $\{x_{i}^{k}\}$ converges strongly to $\bar{x}$, where $\bar{x}$ belongs to $\mathcal{P}$.
\end{theorem}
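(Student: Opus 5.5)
We reduce Theorem \ref{theorem2} to Theorem \ref{theorem--1} by a product space reformulation. Set $\bm{\mathcal{G}}:=\mathcal{G}_1\times\cdots\times\mathcal{G}_m$ with the usual inner product $\langle \bm{u},\bm{w}\rangle=\sum_{j=1}^m\langle u_j,w_j\rangle_{\mathcal{G}_j}$. Define $\bm{L}:\mathcal{H}\to\bm{\mathcal{G}}$ by $\bm{L}x=(L_1x,\dots,L_mx)$ and $\bm{B}:\bm{\mathcal{G}}\to 2^{\bm{\mathcal{G}}}$ by $\bm{B}(w_1,\dots,w_m)=B_1w_1\times\cdots\times B_mw_m$. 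Then $\bm{L}$ is bounded linear with adjoint $\bm{L}^*\bm{u}=\sum_{j}L_j^*u_j$, so $\bm{L}^*\bm{B}\bm{L}=\sum_j L_j^*B_jL_j$. Moreover $\bm{B}$ is maximally monotone, being a product of maximally monotone operators (Proposition 20.23 of \cite{bauschkebook2017}). Hence (\ref{primal problem-1}) is exactly (\ref{primal problem}) with $(L,B)$ replaced by $(\bm{L},\bm{B})$. The dual (\ref{dual problem-1}) is the dual (\ref{dual problem}) written through the set $\bm{Z}$ in (\ref{the definition of Z}), since $\bm{u}\in\bm{B}(\bm{L}x)$ means $u_j\in B_j(L_jx)$ for every $j$.

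Next we check that Algorithm \ref{alg2} is Algorithm \ref{alg1} applied to $(\bm{L},\bm{B})$. The resolvent of $\bm{B}/\gamma$ acts componentwise, $J_{\bm{B}/\gamma}(\bm{w})=(J_{B_1/\gamma}w_1,\dots,J_{B_m/\gamma}w_m)$, which yields the $y_j^k$ updates. The term $\bm{L}^*(\gamma\bm{L}x_1^k-\bm{v}^k)$ equals $\sum_j L_j^*(\gamma L_jx_1^k-v_j^k)$, which matches the $x_n^k$ update. The dual update $\gamma(\bm{y}^k-\bm{L}x_n^k)$ splits into the $m$ components shown in (\ref{the main algorithm--3}). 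The inner product (\ref{new inner product}) on $\mathcal{H}^{n-1}\times\bm{\mathcal{G}}$ becomes $\sum_i\langle z_i,\bar z_i\rangle+\frac{\alpha}{\gamma}\sum_j\langle v_j,\bar v_j\rangle$, which gives the meaning of $\|\cdot\|_{\frac{\alpha}{\gamma}}$ in (iv).

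The only point requiring care is the step-size condition. Theorem \ref{theorem--1} requires $\gamma<\frac{1-\frac12\alpha\beta}{\alpha\|\bm{L}\|^2}$. We bound $\|\bm{L}x\|^2=\sum_j\|L_jx\|^2\le\big(\sum_j\|L_j\|^2\big)\|x\|^2$, so $\|\bm{L}\|^2\le\sum_j\|L_j\|^2$. Therefore the hypothesis on $\gamma$ in Theorem \ref{theorem2} implies the hypothesis of Theorem \ref{theorem--1}. The conditions on $\alpha$ and $\lambda$ and the value of $\theta$ are unchanged. We also take as standing assumption, as in Theorem \ref{theorem--1}, that $\bm{Z}\neq\emptyset$, i.e. that (\ref{primal problem-1}) has a solution.

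With these identifications, assertions (i)–(v) follow directly from Theorem \ref{theorem--1}(i)–(v) by reading off components. Part (ii) gives $y^k=(y_1^k,\dots,y_m^k)\rightharpoonup\bm{L}\bar x=(L_1\bar x,\dots,L_m\bar x)$. Part (iii) gives $\gamma\bm{L}x_i^k-\bm{v}^k\rightharpoonup\gamma\bm{L}\bar x-\bar{\bm{v}}$, which lies in the dual solution set. Part (iv) is the rate statement in the product norm above. Part (v) is unaffected, since uniform monotonicity concerns only the operators $A_i$.
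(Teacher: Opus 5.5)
Your proposal is correct and follows the same route as the paper. Both form the product-space reformulation with $\bm{B}=B_1\times\cdots\times B_m$ and $\bm{L}x=(L_1x,\dots,L_mx)$, then appeal directly to Theorem \ref{theorem--1}. One point in your favor: the paper asserts $\|\bm{L}\|^2=\sum_{j=1}^m\|L_j\|^2$, which fails in general, whereas you use only $\|\bm{L}\|^2\le\sum_{j=1}^m\|L_j\|^2$, and that inequality is exactly what is needed for the hypothesis of Theorem \ref{theorem2} to imply the step-size condition of Theorem \ref{theorem--1}.
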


\begin{proof}
Define the product space $\bm{G}=\mathcal{G}_{1}\times\cdots\times\mathcal{G}_{m}$, which equipped with the inner product and the associated norm as follows
$$
\langle \bm{y}, \bm{z} \rangle_{\bm{G}} = \sum_{j=1}^{m}\langle y_j, z_j \rangle, \textrm{ and } \|\bm{y}\|_{\bm{G}} = \sqrt{\sum_{j=1}^{m}\|y_j\|^2},
$$
where $\bm{y}=(y_1, \cdots, y_m)\in \bm{G}$ and $\bm{z} = (z_1, \cdots, z_m)\in \bm{G}$.
Let $\bm{B}:\bm{G}\rightarrow 2^{\bm{G}}$ is $\bm{y} \mapsto \times_{j=1}^{m}B_{j}y_j$, and the bounded linear operator $\bm{L}:\mathcal{H}\rightarrow \bm{G} : x \mapsto (L_1 x, \cdots, L_m x)$.  Moreover, $\|\bm{L}\|^2=\sum_{j=1}^{m}\|L_{j}\|^2$ and its adjoint operator is $\bm{L}^{*}:\bm{G}\rightarrow \mathcal{H}: \bm{y}\mapsto \sum_{j=1}^{m}L_{j}^{*}y_j$, for any $\bm{y} = (y_1, \cdots, y_m)\in \bm{G}$. Therefore, the monotone inclusion (\ref{primal problem-1}) is equivalent to
$$
0\in \sum_{i=1}^{n}A_i x + \bm{L}^{*}\bm{B}\bm{L}x + \sum_{k=1}^{n-1}C_k x.
$$
Therefore, the results follow by replacing $B$ with the operator $\bm{B}$ and $L$ with the linear operator $\bm{L}$ in Theorem \ref{theorem--1}. In particular, the condition
\[
\gamma\in \left(0,\frac{1-\frac{1}{2}\alpha\beta}{\alpha\sum_{j=1}^{m}\|L_j\|^2}\right)
\]
is exactly the condition
\[
\gamma\in \left(0,\frac{1-\frac{1}{2}\alpha\beta}{\alpha\|\bm{L}\|^2}\right)
\]
in the product space. Moreover, the averagedness parameter is still
\[
\theta=\frac{\lambda}{1-\frac{\alpha\beta}{2}}.
\]
Hence, all assertions follow from Theorem \ref{theorem--1}.

\end{proof}

\subsection{Applications to convex minimization problems}

In this subsection, we consider the following convex composite minimization problem:
\begin{equation}\label{convex-composite}
\min_{x\in \mathcal{H}} \sum_{i=1}^{n}g_i (x) + \sum_{k=1}^{n-1}f_{k}(x) + \sum_{j=1}^{m}h_{j}(L_j x),
\end{equation}
where $g_1, \cdots, g_n:\mathcal{H}\rightarrow (-\infty, +\infty]$ are proper, lsc, and convex functions. For each $j\in \{1, \cdots, m\}$, $h_j:\mathcal{G}_{j}\rightarrow (-\infty, +\infty]$ is a proper, lsc, and convex functions, and for each $k\in \{1, \cdots n-1\}$, $f_k:\mathcal{H}\rightarrow (-\infty,+\infty)$ is convex and differentiable with $\beta_k$-Lipshictz continuous gradient.  For each $j\in \{1, \cdots, m\}$, $L_j:\mathcal{H}\rightarrow \mathcal{G_j}$ is a nonzero bounded linear operator. By the Fenchel-Rockafellar duality theorem, the dual problem of (\ref{convex-composite}) is
\begin{equation}\label{convex-composite-dual}
\max_{y_1 \in \mathcal{G}_1, \cdots  y_m \in \mathcal{G}_m} - \left (\sum_{k=1}^{n-1}f_{k} + \sum_{i=1}^{n}g_i \right )^{*} \left(-\sum_{j=1}^{m}L_{j}^{*}y_j \right ) - \sum_{j=1}^{m}h_{j}^{*}(y_j),
\end{equation}

\begin{theorem}\label{main-convex-minization3}
Consider the convex minimization problem (\ref{convex-composite}) and its dual (\ref{convex-composite-dual}). Let $\lambda\in (0,1)$, $\alpha\in (0,\frac{2}{\beta})$, and $\gamma\in \left(0,\frac{1-\frac{1}{2}\alpha\beta}{\alpha \sum_{j=1}^{m}\|L_j\|^2} \right)$, where $\beta = \max\{\beta_k\}_{k=1}^{n-1}$. For any given $\bm{z}^0=(z_{1}^{0},\cdots,z_{n-1}^{0})\in \mathcal{H}^{n-1}$, $\bm{v}^{0}=(v_{1}^{0},\cdots,v_{m}^{0})\in \mathcal{G}_{1}\times\cdots\times\mathcal{G}_{m}$ and for every $k\geq 0$, iterate
\begin{equation}\label{the main algorithm--32}
\dbinom{\bm{z}^{k+1}}{\bm{v}^{k+1}}= \dbinom{\bm{z}^{k}}{\bm{v}^{k}} + \lambda\left(
\begin{matrix}
x_{2}^{k}-x_{1}^{k}\\
x_{3}^{k}-x_{2}^{k}\\
\vdots\\
x_{n}^{k}-x_{n-1}^{k}\\
\gamma(y_{1}^{k}-L_{1}x_{n}^{k})\\
\vdots\\
\gamma(y_{m}^{k}-L_{m}x_{n}^{k})
\end{matrix}
\right)
\end{equation}
with
\begin{equation}\label{the main algorithm--42}
\left\{
\begin{aligned}
& x_{1}^{k}= prox_{ \alpha g_{1}}(z_{1}^{k}),\\
& x_{i}^{k}= prox_{ \alpha g_{i}}(z_{i}^{k} + x_{i-1}^{k} - z_{i-1}^{k} - \alpha \nabla f_{i-1}x_{i-1}^{k}), \;\forall i\in [2,n-1],\\
& x_{n}^{k}= prox_{ \alpha g_{n}}(x_{1}^{k} + x_{n-1}^{k} - z_{n-1}^{k}- \alpha \sum_{j=1}^{m}L_{j}^{\ast}(\gamma L_{j}x_{1}^{k}-v_{j}^{k})- \alpha \nabla f_{n-1}x_{n-1}^{k}),\\
& y_{j}^{k}= prox_{\frac{h_{j}}{\gamma}}\left(L_{j}(x_{1}^{k} + x_{n}^{k}) - \frac{v_{j}^{k}}{\gamma}\right), \forall j\in [1,m].
\end{aligned}
\right.
\end{equation}
 Then, the following assertions hold:

\emph{(i)} The sequence $(\bm{z}^{k},\bm{v}^k)_{k\in N}$ converges weakly to $(\bm{\bar{z}},\bar{\bm{v}})=(\bar{z}_1,\cdots,\bar{z}_{n-1},\bar{v}_1,\cdots,\bar{v}_{m})\in \mathcal{H}^{n-1}\times\mathcal{G}_{1}\times\cdots\times\mathcal{G}_{m}$.

\emph{(ii)} The sequence $(\bm{x}^{k},\bm{y}^k)_{k\in N}$ converges weakly to $(\bm{\bar{x}},\bm{L}\bar{x})=(\bar{x},\cdots,\bar{x},L_1\bar{x},\cdots,L_m\bar{x})$, where $\bar{x}$ is a solution of (\ref{convex-composite}).

\emph{(iii)} For every $i\in [1,n]$, the sequences $(\gamma L_1x_{i}^k-v_{1}^k,\cdots,\gamma L_m x_{i}^k-v_{m}^k)_{k\in N}$ converges weakly to $(\gamma L_1 \bar{x}-\bar{v}_{1},\cdots,\gamma L_m \bar{x}-\bar{v}_{m})$, which solves the dual problem (\ref{convex-composite-dual}).

\emph{(iv)} $
\| (\bm{z}^k, \bm{v}^k) - (\bm{z}^{k+1}, \bm{v}^{k+1}) \|_{\frac{\alpha}{\gamma}} = o\left(\frac{\lambda}{\sqrt{\sigma_k}}\right),
$
where \( \sigma_k = \lambda(1-\lambda)(k+1) \).

\emph{(v)} If there exists an index $i\in [1,n]$ such that $\partial g_i$ is uniformly monotone, then the sequence $\{x_{i}^{k}\}$ converges strongly to $\bar{x}$, where $\bar{x}$ is a solution of (\ref{convex-composite}).
\end{theorem}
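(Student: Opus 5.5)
The plan is to specialize Theorem \ref{theorem2} to subdifferential and gradient operators. I will set $A_i:=\partial g_i$ for $i\in[1,n]$, $B_j:=\partial h_j$ for $j\in[1,m]$, and $C_k:=\nabla f_k$ for $k\in[1,n-1]$.

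\textbf{Checking the hypotheses.} Since $g_i,h_j\in\Gamma_0$, the operators $\partial g_i$ and $\partial h_j$ are maximally monotone. By the Baillon--Haddad theorem, each $\nabla f_k$ is $\frac{1}{\beta_k}$-cocoercive, so the standing assumptions of (\ref{primal problem-1}) hold. Because $J_{\alpha\partial g_i}=\mathrm{prox}_{\alpha g_i}$ and $J_{\partial h_j/\gamma}=\mathrm{prox}_{h_j/\gamma}$, the iteration (\ref{the main algorithm--32})--(\ref{the main algorithm--42}) is exactly Algorithm \ref{alg2}. I will interpret the step-size requirement as in Theorem \ref{theorem2}, namely $\lambda\in(0,1-\frac{\alpha\beta}{2})$, so that Lemma \ref{the lemma of nonexpansion} applies. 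With that reading, (iv) is (iv) of Theorem \ref{theorem2}; the $\lambda$ appearing in the stated rate should be read as $\theta$, which equals $\lambda$ only when $C_k=0$. Item (v) follows at once, since uniform monotonicity of $\partial g_i$ is precisely the hypothesis of Theorem \ref{theorem2}(v). Item (i) is also immediate.

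\textbf{Relating inclusions to optimization.} This is where the real work lies. I need a qualification condition guaranteeing that the inclusion (\ref{primal problem-1}) has a solution; I will assume, as is implicit, that the solution set $\bm{Z}$ is nonempty.

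\emph{Primal side.} A zero $\bar x$ of $\sum_i\partial g_i+\sum_j L_j^*\partial h_j L_j+\sum_k\nabla f_k$ satisfies $0\in\partial\Phi(\bar x)$ for the objective $\Phi$ of (\ref{convex-composite}). This uses only the elementary inclusion $\sum\partial\subseteq\partial\sum$ and $L^*\partial h L\subseteq\partial(h\circ L)$, so no qualification condition is needed in this direction. Hence $\bar x$ minimizes (\ref{convex-composite}), which gives (ii).

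\emph{Dual side.} Set $\bar u_j=\gamma L_j\bar x-\bar v_j$. By Theorem \ref{theorem2}(iii), $\bar u_j\in\partial h_j(L_j\bar x)$ and $-\sum_j L_j^*\bar u_j\in(\sum\partial g_i+\sum\nabla f_k)(\bar x)\subseteq\partial(\sum g_i+\sum f_k)(\bar x)$. By the Fenchel--Young equality cases, with $F=\sum g_i+\sum f_k$,
\[
F(\bar x)+F^*\Big(-\sum_j L_j^*\bar u_j\Big)=\Big\langle -\sum_j L_j^*\bar u_j,\bar x\Big\rangle ,
\]
and also $h_j(L_j\bar x)+h_j^*(\bar u_j)=\langle \bar u_j,L_j\bar x\rangle$. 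Adding these equalities shows that the primal value at $\bar x$ equals the dual value at $(\bar u_j)_j$. Weak duality then implies that $(\bar u_j)_j$ maximizes (\ref{convex-composite-dual}), which gives (iii).

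The main obstacle is therefore only the existence assumption, $\bm{Z}\neq\emptyset$. If one prefers to start from the existence of a minimizer, I would impose a standard condition such as $0\in\mathrm{sri}$ of the relevant domain differences, so that the subdifferential sum rule holds with equality.
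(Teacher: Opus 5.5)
Your proposal is correct and follows the paper's route: the paper's proof consists only of the identification $A_i=\partial g_i$, $C_k=\nabla f_k$, $B_j=\partial h_j$ and an appeal to Theorem \ref{theorem2}, and your extra steps fill in what the paper leaves implicit (Baillon--Haddad for cocoercivity, the qualification-free inclusion $\sum_i\partial g_i+\sum_j L_j^*\partial h_jL_j+\sum_k\nabla f_k\subseteq\partial\Phi$, the Fenchel--Young argument for dual optimality, and stating explicitly both $\bm{Z}\neq\emptyset$ and the restriction $\lambda<1-\frac{\alpha\beta}{2}$ that Theorem \ref{theorem2} actually requires). One small simplification: in (iv) you need not reread $\lambda$ as $\theta$, since for fixed $\lambda,\theta\in(0,1)$ both forms of the rate just say the residual is $o(1/\sqrt{k+1})$.
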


\begin{proof}
By the first order optimality condition, (\ref{convex-composite}) can be recast as (\ref{primal problem-1}) with
$$
A_i = \partial g_i, C_k = \nabla f_{k}, \textrm{ and } B_j = \partial h_{j}.
$$
Therefore, the convergence theorem is a direct application of Theorem \ref{theorem2}.
\end{proof}

\begin{remark}
In \cite{Tang2022JSC}, the authors proposed a primal-dual splitting algorithm for solving \eqref{convex-composite} and \eqref{convex-composite-dual}. The method treats the finite sum of convex differentiable functions $\sum_{k=1}^{n-1}f_{k}$ as a whole and adopts a parallelizable primal-dual scheme,
in which local proximal steps are computed independently across blocks and then aggregated
through a global synchronization using traditional product-space techniques.

In contrast, the proposed algorithm~\eqref{the main algorithm--32}--\eqref{the main algorithm--42} employs a
sequential chain-structured splitting, updating variables in a dependent order with reduced
dual storage and without requiring global reduction. As a result, it is both simpler and more memory-efficient. To illustrate this, we compare the proposed algorithm with \cite{Tang2022JSC} on a concise image deblurring problem in the next section.
\end{remark}


\section{Numerical experiments}

In this section, we present two numerical experiments to evaluate the proposed algorithms. The first addresses a constrained image deblurring problem using a nuclear norm–total variation model with box constraints. The second tackles image denoising via the MC-TV model, where a convex–nonconvex reformulation yields a convex problem under certain conditions. All experiments are carried out on a laptop running Windows 7 with MATLAB R2016a, equipped with an Intel Core i7-6700 processor (3.40 GHz) and 4 GB of RAM. We use the peak signal-to-noise ratio (PSNR) and the structural similarity index (SSIM)~\cite{wangzhou} to evaluate the quality of the restored images. They are defined as
\begin{equation*}\label{N3}
\mathrm{PSNR} = 20\log_{10}\!\left(\frac{255\sqrt{mn}}{\|x-\widetilde{x}\|_{F}}\right),
\end{equation*}
and
\begin{equation*}\label{N4}
\mathrm{SSIM} = \frac{(2\mu_{1}\mu_{2}+c_{1})(2\sigma_{12}+c_{2})}
{(\mu_{1}^{2}+\mu_{2}^{2}+c_{1})(\sigma_{1}^{2}+\sigma_{2}^{2}+c_{2})},
\end{equation*}
where $\|\cdot\|_{F}$ denotes the Frobenius norm, $x\in \mathbb{R}^{m\times n}$ is the original image, and $\widetilde{x}\in \mathbb{R}^{m\times n}$ is the restored image. The constants $c_{1}>0$ and $c_{2}>0$ are small positive numbers for stability. The quantities $\mu_{1}$ and $\mu_{2}$ denote the mean values of $x$ and $\widetilde{x}$, respectively; $\sigma_{1}^{2}$ and $\sigma_{2}^{2}$ are their corresponding variances; and $\sigma_{12}$ is the covariance between $x$ and $\widetilde{x}$. We use the following condition as the stopping criterion:
$$
\frac{\| \bm{z}^{k+1} - \bm{z}^{k} \|}{\|\bm{z}^{k}\|} \leq 10^{-5}.
$$

The test images used in the experiments are presented in Figure \ref{test-images}.
\begin{figure}[H]
     \setlength{\abovecaptionskip}{-3pt}
  \centering
  \subfigcapskip=-10pt
  \makeatletter
    \subfigure[]{
        \scalebox{0.35}{\includegraphics{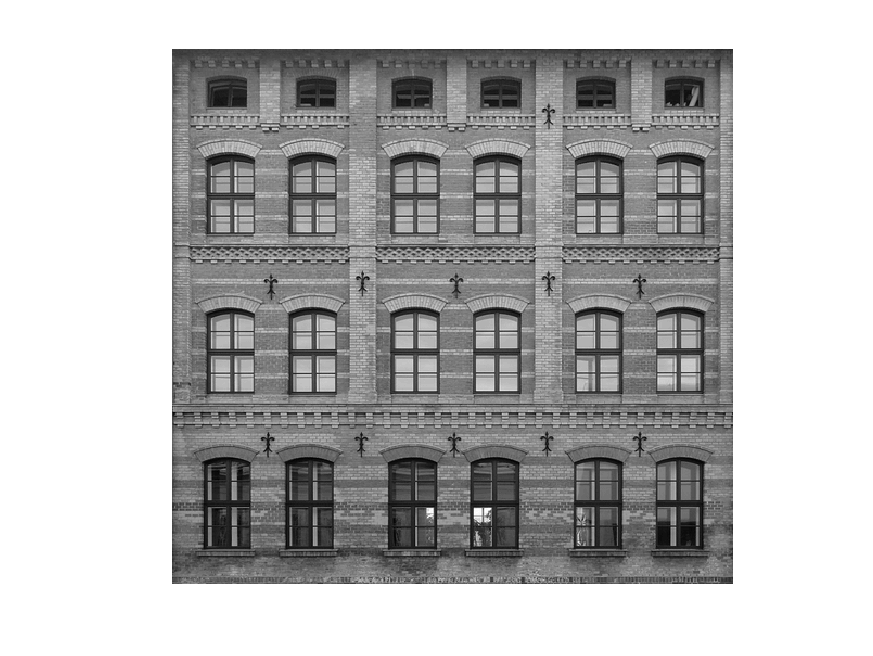}}
    }\hspace{-35pt}
    \subfigure[]{
        \scalebox{0.35}{\includegraphics{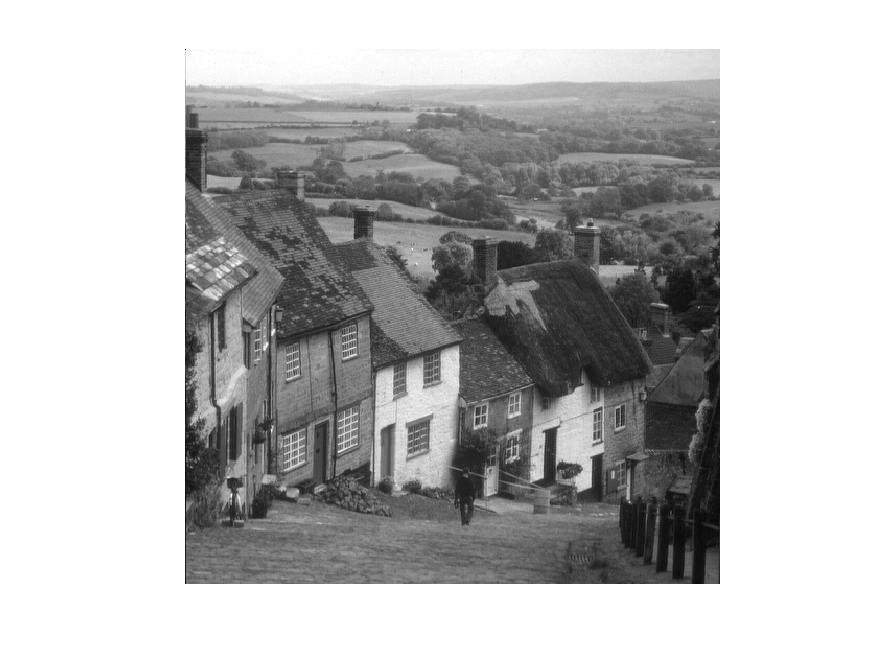}}
    }\hspace{-35pt}
        \subfigure[]{
        \scalebox{0.35}{\includegraphics{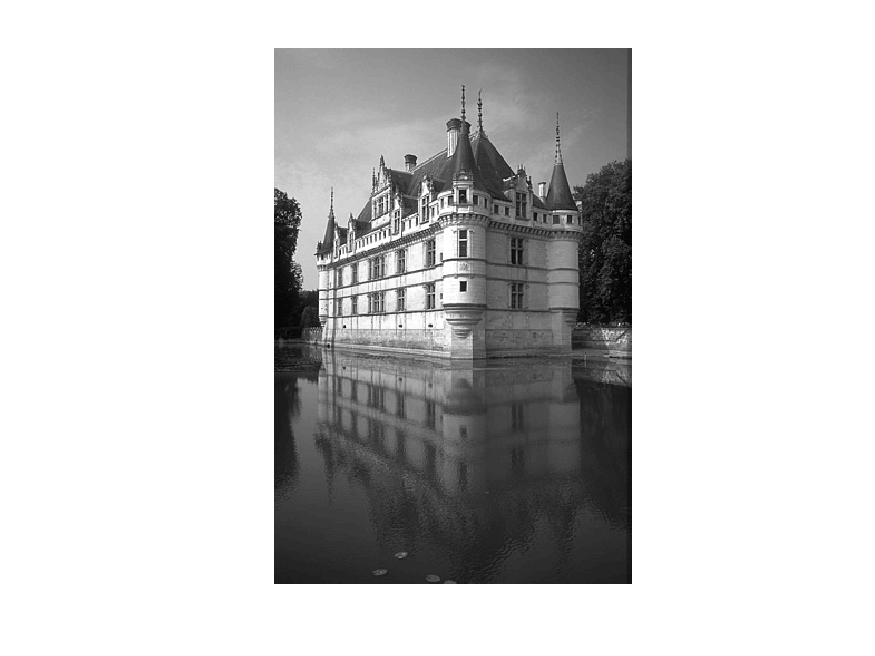}}
    }
    \caption{Test images (pixel intensity range:$0$-$255$). (a) Building, size $493\times 517$. (b) Goldhill, size $512\times 512$. (c) Castle, size $481\times 321$.}
    \label{test-images}
\end{figure}

\subsection{Image deblurring task}

In this subsection, we assess the performance of the proposed algorithms through numerical experiments on a standard image deblurring task. In detail, we consider the following constrained nuclear norm--total variation joint regularization model, which was proposed in \cite{Tang2022JSC}:
\begin{equation}\label{constrained-l2-nuclear-tv}
\begin{aligned}
\min_{x} \ & \tfrac{1}{2}\|Ax-b\|^{2}  + \mu_1 \|x\|_{TV} + \mu_2 \|x\|_{*}, \\
\text{s.t.} \ & x \in C.
\end{aligned}
\end{equation}
The set \(C\) represents the box constraint corresponding to valid pixel intensity bounds, typically defined as
\[
C = \{x \in \mathbb{R}^{m\times n} \mid 0 \le x_{ij} \le 255, 1\leq i \leq m, 1\leq j \leq n\},
\]
with \(0\) and \(255\) being the lower and upper intensity limits, respectively.
This constraint ensures that the reconstructed image remains physically meaningful.
Introducing the indicator function $\delta_{C}$, problem~\eqref{constrained-l2-nuclear-tv} can be equivalently written as
\begin{equation}\label{constrained-l2-nuclear-tv2}
\min_{x} \ \tfrac{1}{2}\|Ax-b\|^{2} + \delta_{C}(x) + \mu_1 \|x\|_{TV} + \mu_2 \|x\|_{*} .
\end{equation}

The total variation term can be expressed as
\[
\|x\|_{TV} = \varphi(Lx),
\]
where $\varphi$ is a convex function and $L$ is a first-order difference operator (see, e.g., \cite{Micchelliandshen2011}). Thus, \eqref{constrained-l2-nuclear-tv2} is a special case of~\eqref{convex-composite} with
\[
n=2, m=1, g_{1}(x) = \delta_{C}(x), g_{2}(x) = \mu_2 \|x\|_{*}, h_{1}(x) = \mu_1 \varphi(x), f_{1}(x)=\tfrac{1}{2}\|Ax-b\|^{2}.
\]
Therefore, the iterative scheme~\eqref{the main algorithm--32} can be directly applied to solve~\eqref{constrained-l2-nuclear-tv2}. In particular, the Lipschitz constant of $\nabla f_{1}$ is equal to $1$, i.e., $\beta =1$.

Each image is synthetically degraded by first applying a blur kernel, followed by the addition of zero-mean Gaussian noise with standard deviation $\sigma_{g}$. Two types of blur kernels are considered: a $9 \times 9$ uniform blur and a $9 \times 9$ Gaussian blur with a standard deviation of $2$.

\subsubsection{Parameter Sensitivity Analysis}

To systematically investigate the influence of the key parameters of the proposed algorithm, i.e., $\lambda$, $\alpha$, and $\gamma$, we evaluate the reconstruction quality (PSNR), convergence behavior (number of iterations), and computational cost (CPU time in seconds) under different parameter configurations. Specifically, $\lambda \in \{0.1, 0.3, 0.5, 0.7, 0.8, 0.9\}$, $\alpha \in \{0.1, 0.5, 1.0, 1.5, 1.9\}$, and $\gamma$ is normalized as $\gamma/\gamma_{\max}(\alpha) \in [0.1, 0.9]$, where $\gamma_{\max}(\alpha) = \frac{1-0.5\alpha}{8\alpha}$. We select the Building image in Figure \ref{test-images} as the test image, and add a $9 \times 9$ uniform blur and zero-mean Gaussian noise with standard deviation $\sigma_{g}=10$ to the image. The corresponding results are shown in Figures \ref{parameter-psnr}, \ref{parameter-iter}, and \ref{parameter-CPU}, respectively.

\begin{figure}[H]
     \setlength{\abovecaptionskip}{0pt}
  \centering
  \makeatletter
    \subfigure[$\lambda = 0.1$]{
        \scalebox{0.3}{\includegraphics{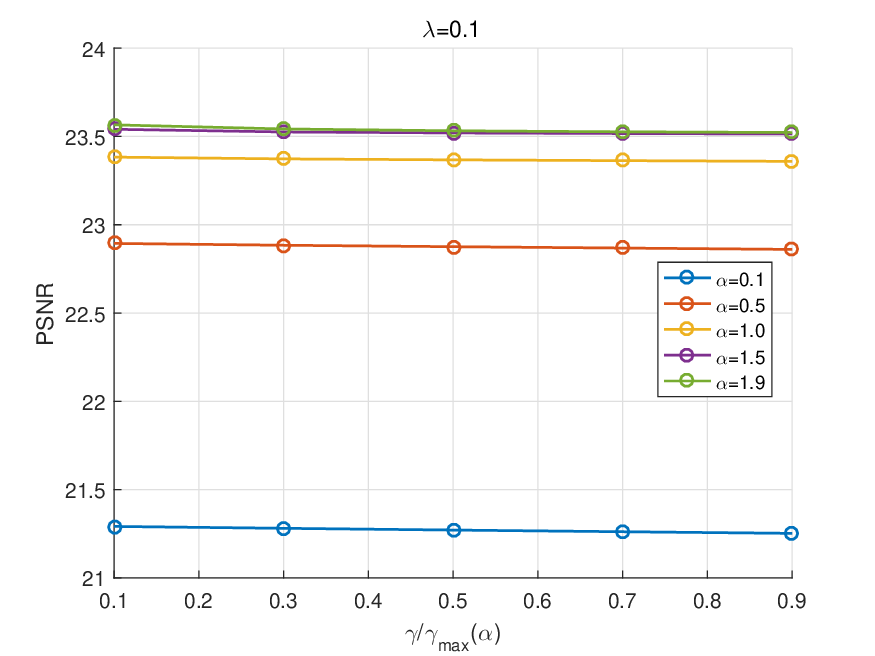}}
    } \hspace{-20pt}
      \subfigure[$\lambda = 0.3$]{
        \scalebox{0.3}{\includegraphics{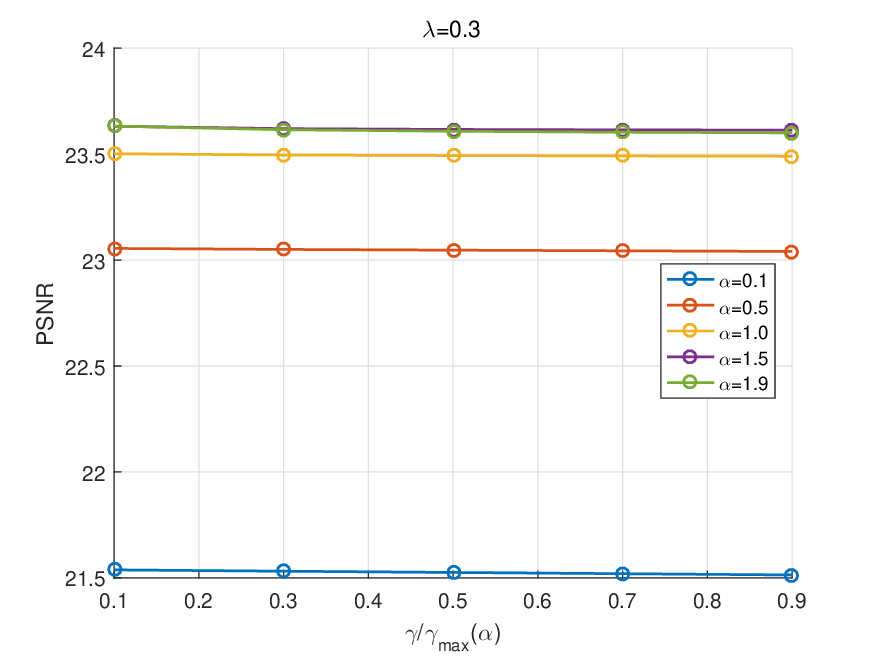}}
    }\hspace{-20pt}
    \subfigure[$\lambda = 0.5$]{
        \scalebox{0.3}{\includegraphics{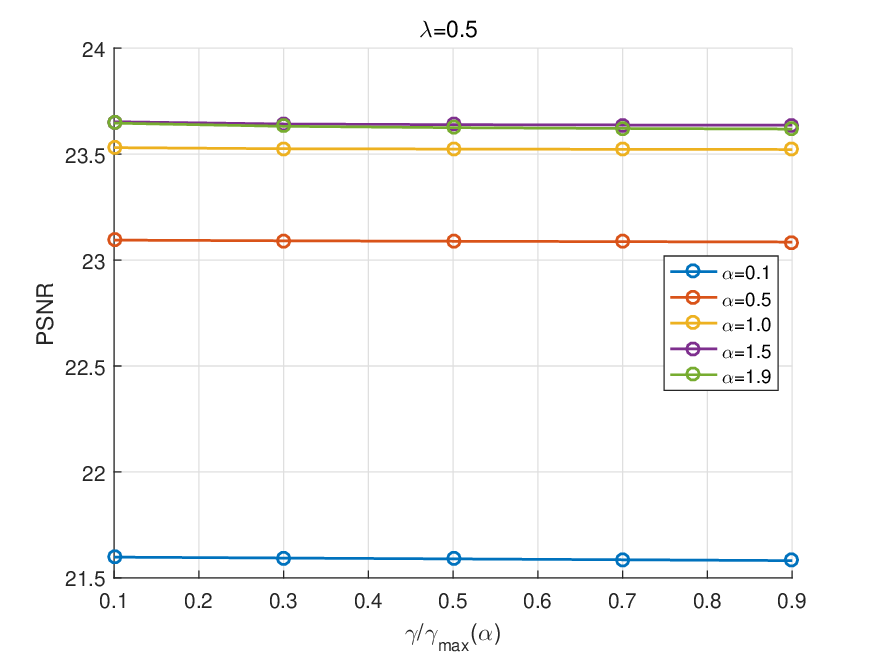}}
    }  \\
    \subfigure[$\lambda = 0.7$]{
        \scalebox{0.3}{\includegraphics{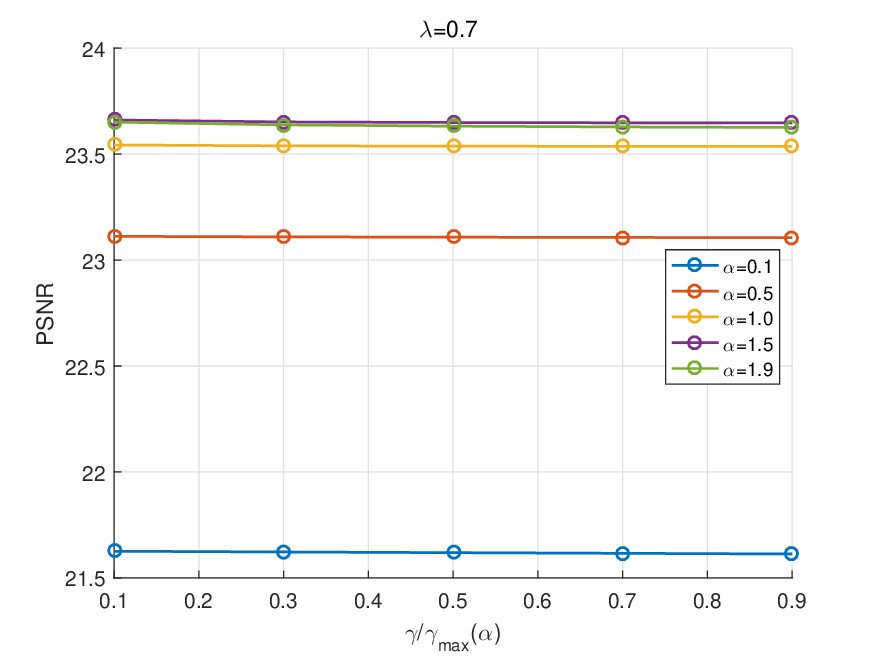}}
    } \hspace{-20pt}
      \subfigure[$\lambda = 0.8$]{
        \scalebox{0.3}{\includegraphics{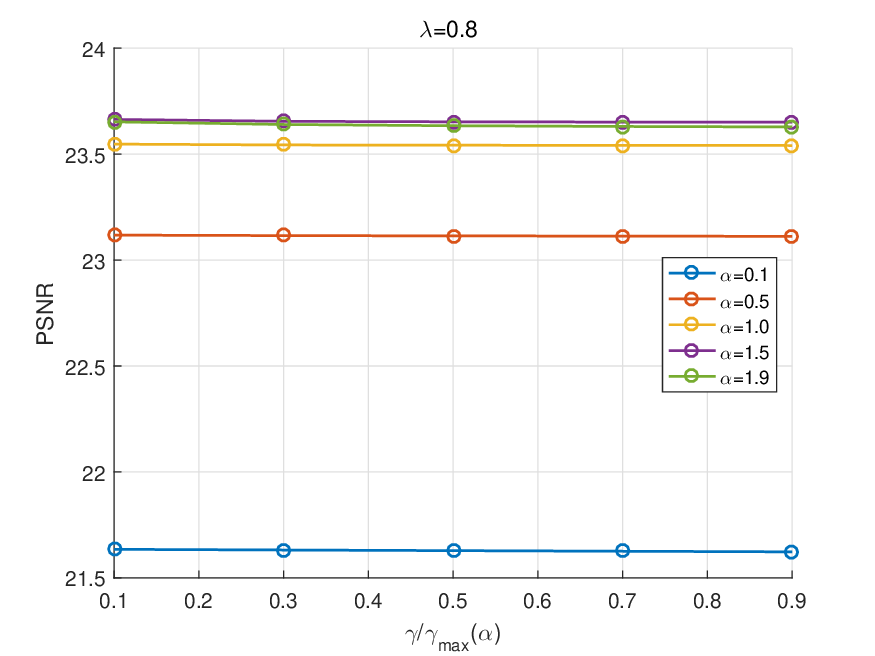}}
    }\hspace{-20pt}
     \subfigure[$\lambda = 0.9$]{
        \scalebox{0.3}{\includegraphics{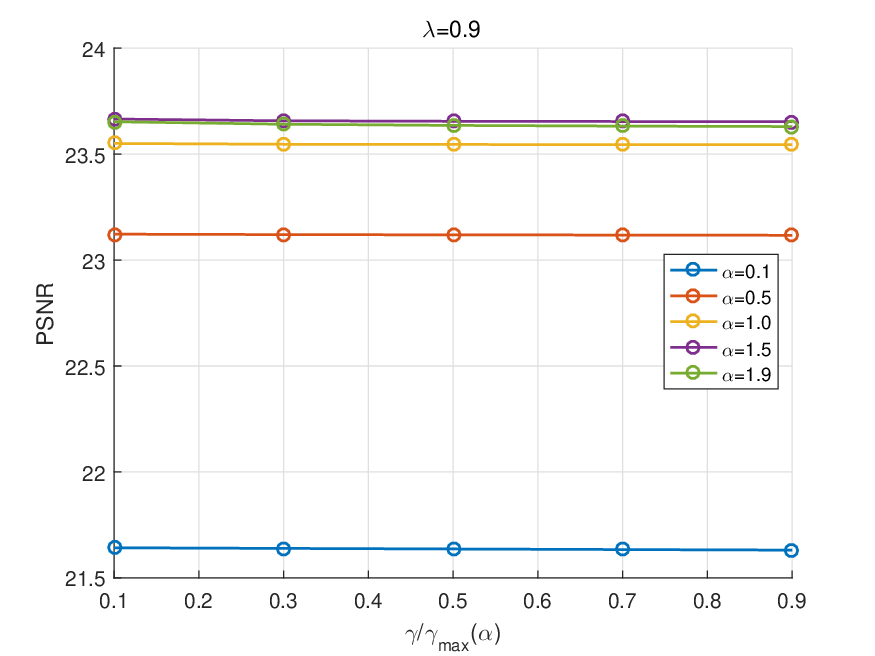}}
    }\\
    \caption{Influence of $\alpha$ and $\gamma$ on PSNR for different $\lambda$ settings.}
    \label{parameter-psnr}
\end{figure}

\begin{figure}[H]
     \setlength{\abovecaptionskip}{0pt}
  \centering
  \makeatletter
    \subfigure[$\lambda = 0.1$]{
        \scalebox{0.3}{\includegraphics{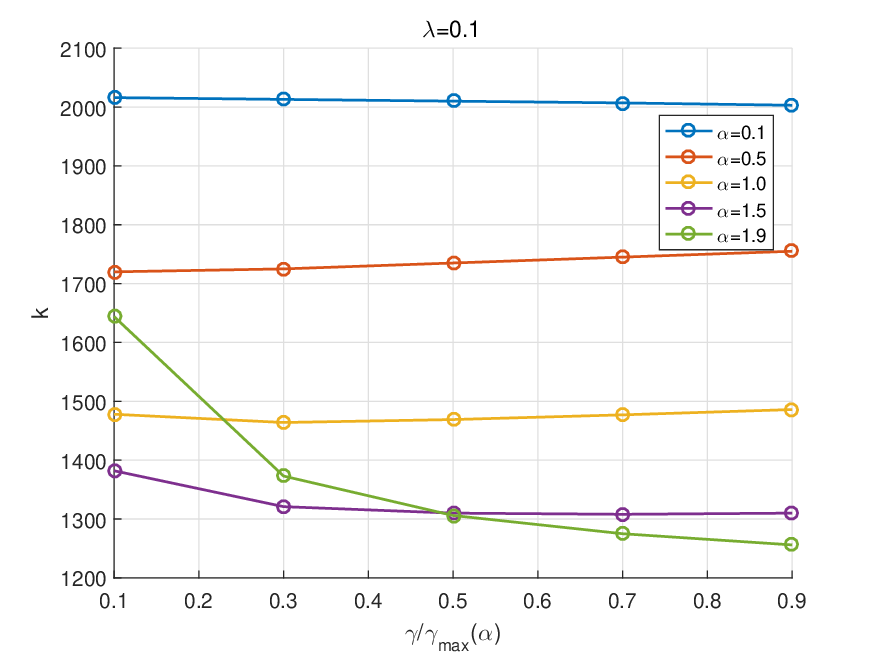}}
    } \hspace{-20pt}
      \subfigure[$\lambda = 0.3$]{
        \scalebox{0.3}{\includegraphics{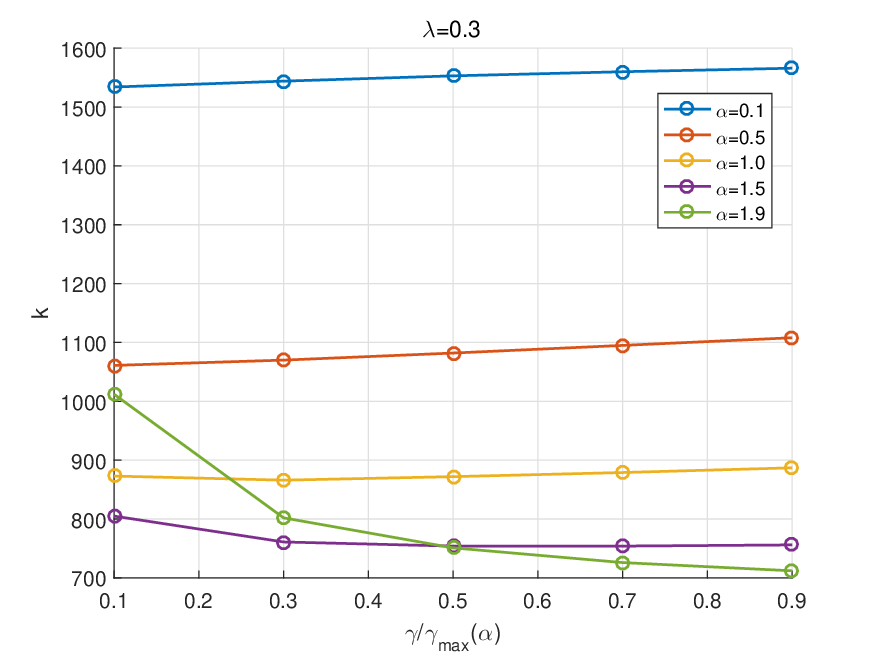}}
    }\hspace{-20pt}
    \subfigure[$\lambda = 0.5$]{
        \scalebox{0.3}{\includegraphics{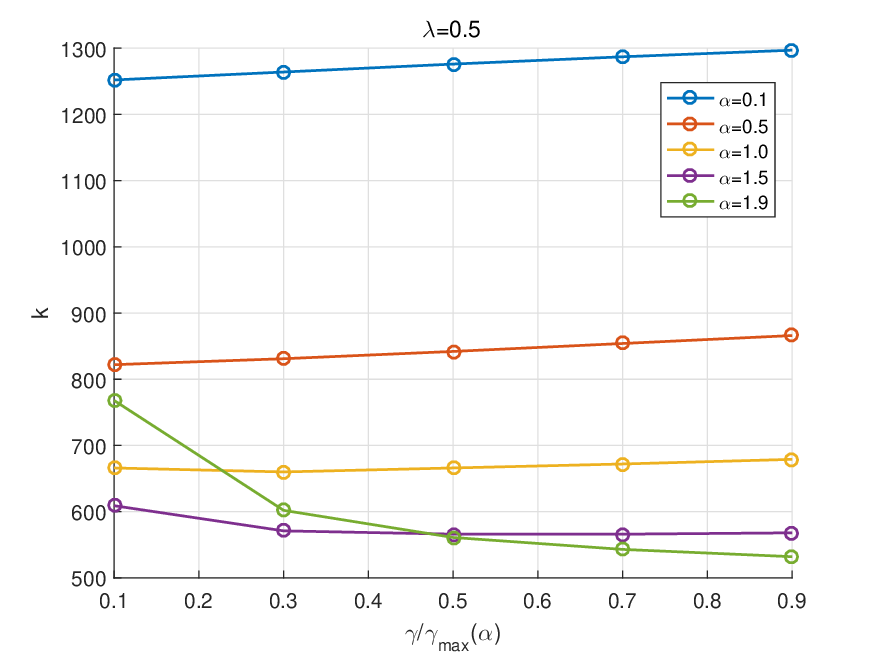}}
    }  \\
    \subfigure[$\lambda = 0.7$]{
        \scalebox{0.3}{\includegraphics{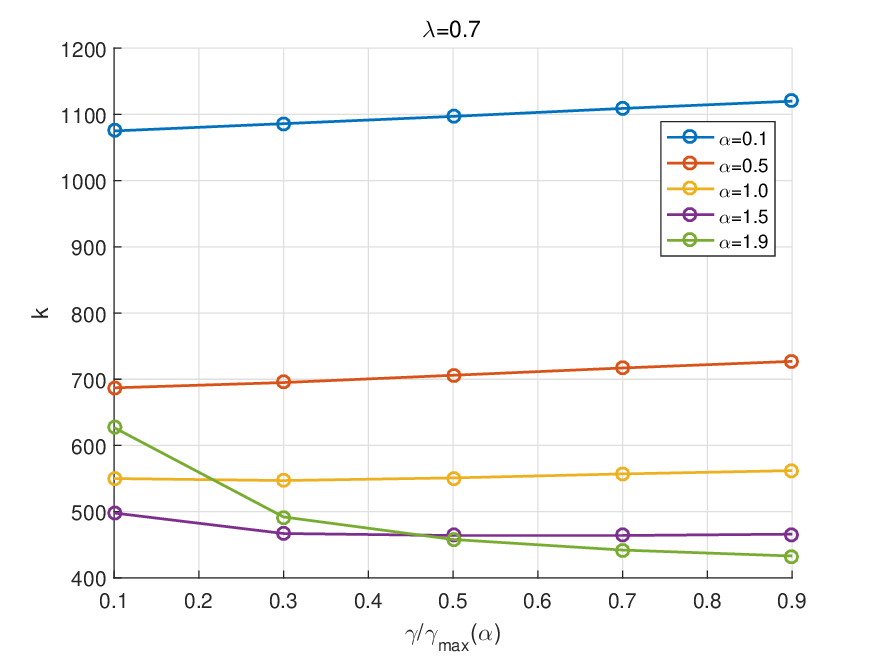}}
    } \hspace{-20pt}
      \subfigure[$\lambda = 0.8$]{
        \scalebox{0.3}{\includegraphics{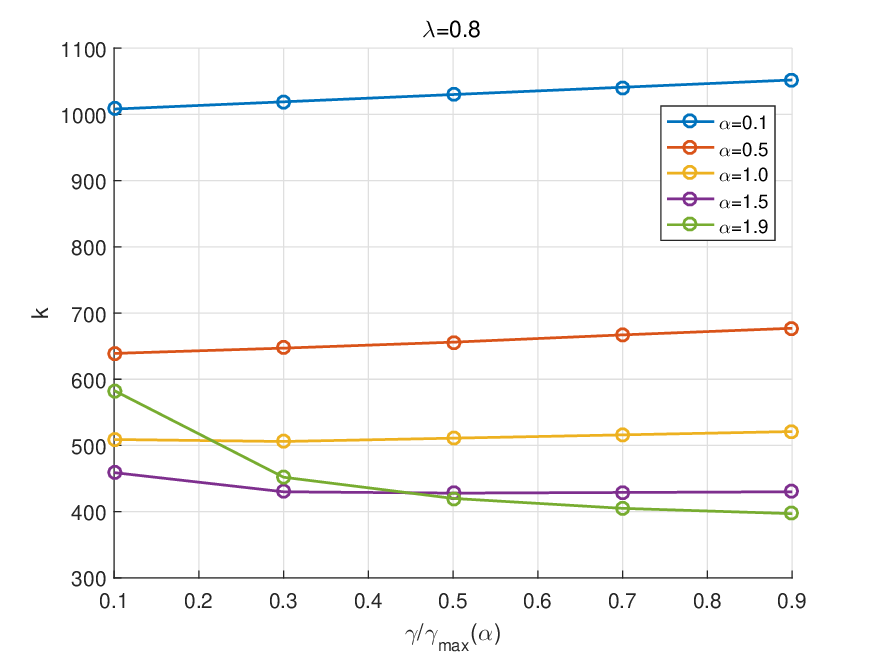}}
    }\hspace{-20pt}
     \subfigure[$\lambda = 0.9$]{
        \scalebox{0.3}{\includegraphics{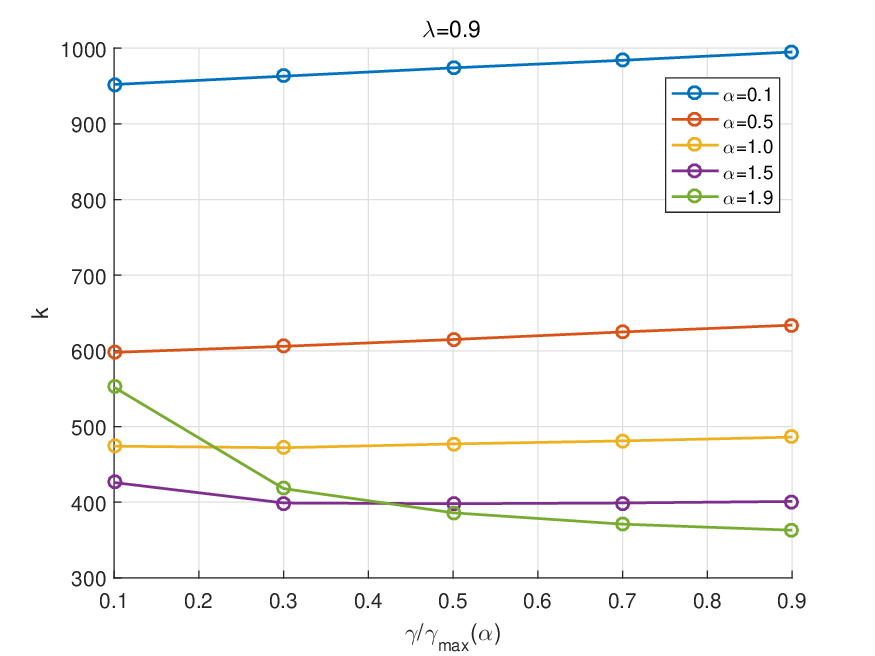}}
    }\\
    \caption{Influence of $\alpha$ and $\gamma$ on number of iterations $k$ for different $\lambda$ settings.}
    \label{parameter-iter}
\end{figure}

\begin{figure}[H]
     \setlength{\abovecaptionskip}{0pt}
  \centering
  \makeatletter
    \subfigure[$\lambda = 0.1$]{
        \scalebox{0.3}{\includegraphics{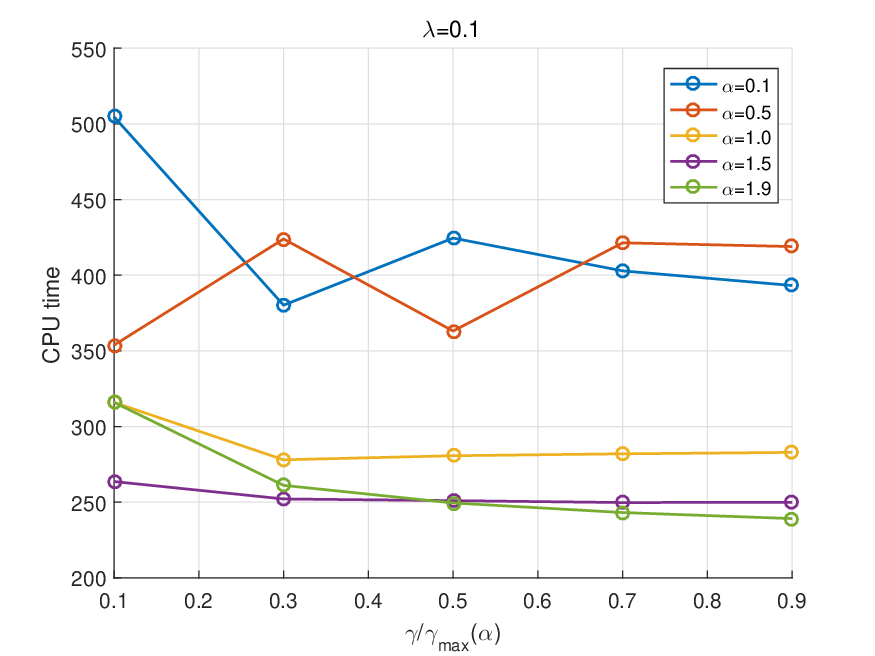}}
    } \hspace{-20pt}
      \subfigure[$\lambda = 0.3$]{
        \scalebox{0.3}{\includegraphics{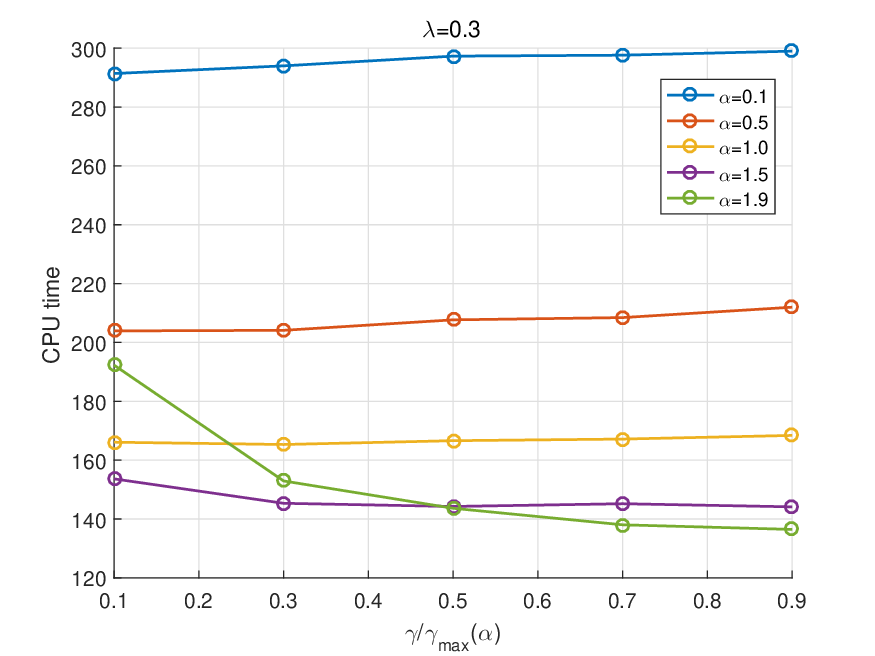}}
    }\hspace{-20pt}
    \subfigure[$\lambda = 0.5$]{
        \scalebox{0.3}{\includegraphics{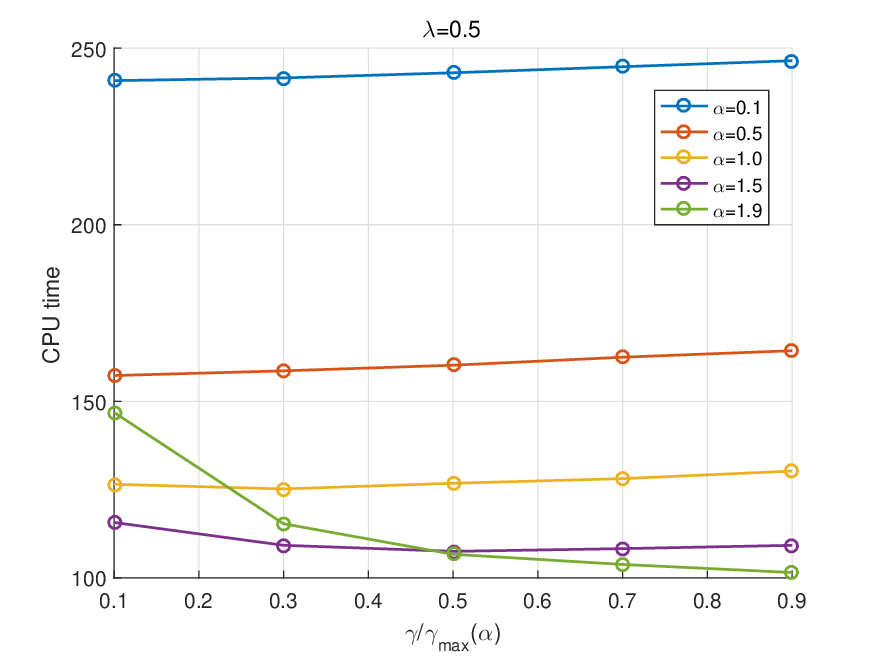}}
    }  \\
    \subfigure[$\lambda = 0.7$]{
        \scalebox{0.3}{\includegraphics{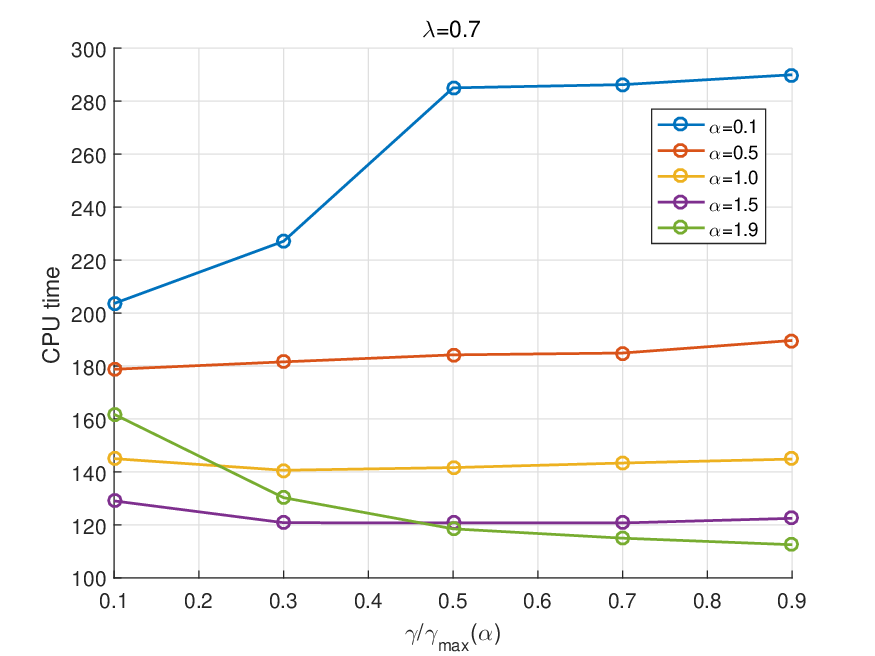}}
    } \hspace{-20pt}
      \subfigure[$\lambda = 0.8$]{
        \scalebox{0.3}{\includegraphics{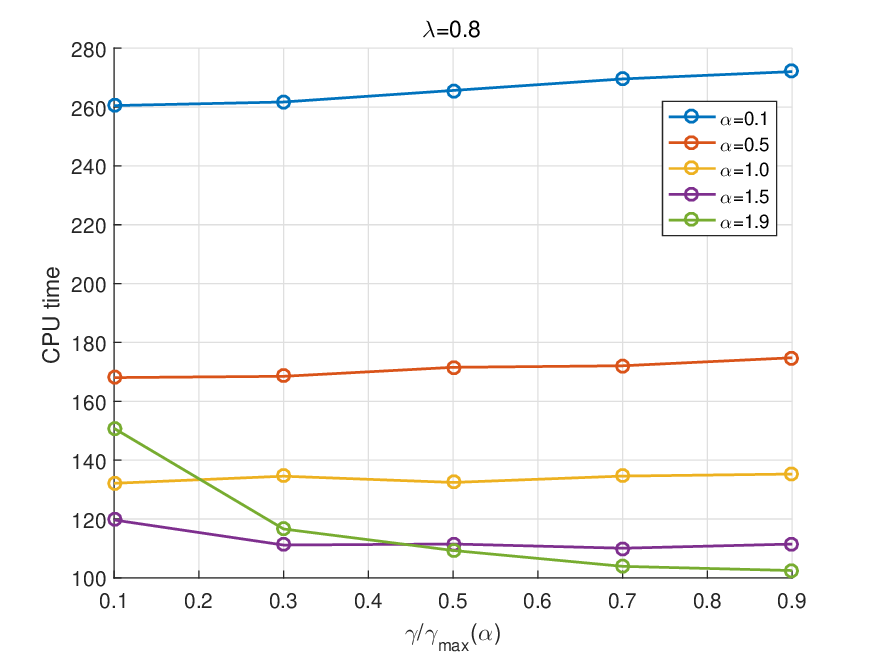}}
    }\hspace{-20pt}
     \subfigure[$\lambda = 0.9$]{
        \scalebox{0.3}{\includegraphics{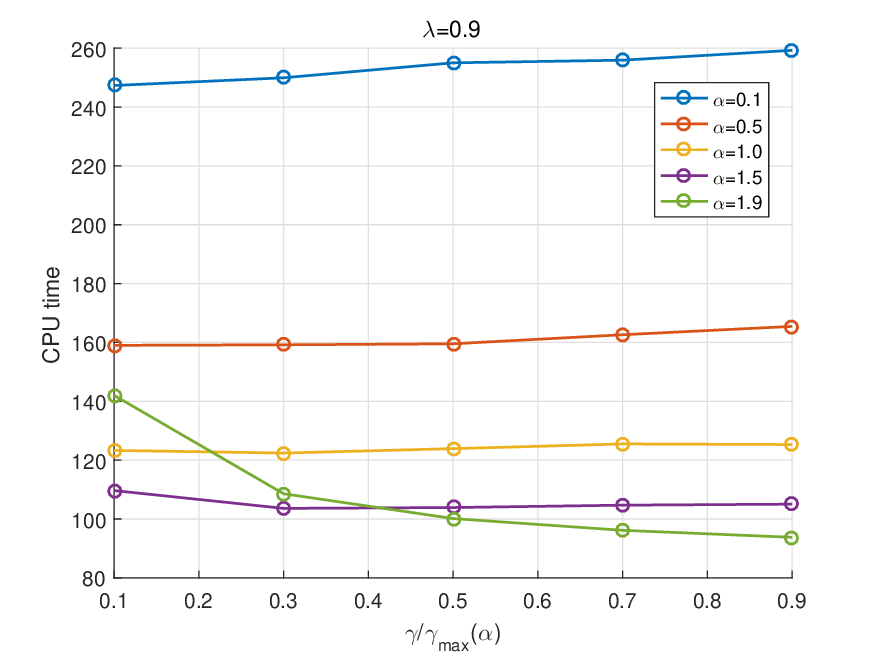}}
    }\\
    \caption{Influence of $\alpha$ and $\gamma$ on CPU time in seconds for different $\lambda$ settings.}
    \label{parameter-CPU}
\end{figure}

\noindent \textbf{Influence of $\gamma$.} From the PSNR curves, it can be observed that varying $\gamma/\gamma_{\max}(\alpha)$ within the admissible range introduces only negligible changes in reconstruction quality. For all combinations of $\lambda$ and $\alpha$, the PSNR variations remain within a very narrow band (typically less than 0.03 dB). This indicates that the proposed algorithm is insensitive to the parameter $\gamma$ in terms of reconstruction accuracy. A similar trend can be observed in the convergence curves. The number of iterations and CPU time remain nearly constant as $\gamma$ increases. Slight acceleration is observed only when $\alpha$ takes relatively large values, but the improvement is marginal. These observations suggest that $\gamma$ primarily affects numerical stability rather than reconstruction performance. Within the feasible range, the algorithm demonstrates strong robustness to $\gamma$, simplifying practical parameter tuning.

\noindent \textbf{Influence of $\alpha$.} Compared with $\gamma$, the parameter $\alpha$ plays a significantly more critical role. The PSNR consistently increases as $\alpha$ increases. When $\alpha$ grows from $0.1$ to $1.5$, a clear performance gain is observed. However, the improvement becomes saturated when $\alpha$ exceeds $1.5$, as the difference between $\alpha=1.5$ and $\alpha=1.9$ is marginal. The impact of $\alpha$ on convergence is even more pronounced. As $\alpha$ increases, the number of iterations decreases significantly. For example, when $\lambda=0.1$, the iteration count drops from approximately $2000$ ($\alpha=0.1$) to around $1200$ ($\alpha=1.9$). The CPU time exhibits the same trend, confirming that $\alpha$ is a dominant factor governing convergence efficiency. This phenomenon suggests that a larger $\alpha$ improves the conditioning of the optimization problem, thereby accelerating convergence.

\noindent \textbf{Influence of $\lambda$.} The parameter $\lambda$ also affects algorithm performance, though in a different manner.
As $\lambda$ increases from $0.1$ to $0.9$: PSNR exhibits a slight but consistent improvement (approximately $0.1$--$0.2$ dB); the number of iterations decreases significantly; the CPU time correspondingly reduces.
Notably, for fixed $\alpha$, the iteration count at $\lambda=0.9$ is nearly half of that at $\lambda=0.1$. This indicates that a larger $\lambda$ strengthens the regularization constraint, which stabilizes the optimization process and accelerates convergence. Therefore, $\lambda$ has a more substantial influence on convergence speed than on reconstruction quality.

Based on the above observations, the following conclusions can be drawn: The proposed algorithm is highly robust to $\gamma$ within the feasible range.
The parameter $\alpha$ is the most influential factor, affecting both reconstruction quality and convergence speed.
 Increasing $\lambda$ significantly accelerates convergence while slightly improving PSNR.
Performance saturation is observed when $\alpha$ exceeds approximately $1.5$.

From a practical perspective, the combination $\alpha \in [1.5, 1.9]$ and $\lambda \in [0.7, 0.9]$ provides a favorable trade-off between reconstruction accuracy and computational efficiency.

%

\subsubsection{Numerical results and discussions}

We select the regularization parameters $\mu_1$ and $\mu_2$ in (\ref{constrained-l2-nuclear-tv}) to maximize the PSNR of the restored images. The chosen parameter values for different test images of (a)-(c) in Figure \ref{test-images} and blur kernels are listed in Table~\ref{regularizaton-1}.

\begin{table}[h]
\centering
\caption{Selected values of the regularization parameters for the image restoration model~(\ref{constrained-l2-nuclear-tv}).}
\begin{tabular}{c|c|ccccc}
\hline
\multirow{2}[1]{*}{Image} & \multirow{2}[1]{*}{Kernel} & \multicolumn{2}{c}{$\sigma_g = 10$} &  & \multicolumn{2}{c}{$\sigma_g = 20$} \\
\cline{3-4} \cline{6-7}
 & & $\mu_1$ & $\mu_2$ & & $\mu_1$ & $\mu_2$ \\
\hline
\multirow{2}[1]{*}{Building}
& Uniform  & $0.1$ & $37$  &   & $0.2$   & $109.7$ \\
& Gaussian & $0.4$ & $24.9$ &   & $0.5$ & $125.5$ \\
\hline
\multirow{2}[1]{*}{Goldhill}
& Uniform  & $0.5$ & $22$  &   & $1.5$ & $62$ \\
& Gaussian & $0.6$   & $25$    &   & $2$ & $68$ \\
\hline
\multirow{2}[1]{*}{Castle}
& Uniform  & $0.5$ & $6.6$  &   & $1.1$ & $37.5$ \\
& Gaussian & $0.7$   & $9.5$    &   & $1.9$ & $43.5$ \\
\hline
\end{tabular}
\label{regularizaton-1}
\end{table}

We compare the proposed algorithm with the primal-dual algorithm of \cite{Tang2022JSC}, which is referred to as PFDR. The iterative parameters of the proposed algorithm are set as: $\lambda = 0.8$, $\alpha = 1.5$, and $\gamma = 0.0188$. The parameters for  PFDR are chosen as specified \cite{Tang2022JSC}.  Table \ref{table-results} reports the quantitative comparisons under different kernels, images, and noise levels. The proposed algorithm consistently achieves marginally higher PSNR and SSIM values than PFDR \cite{Tang2022JSC}, indicating more stable restoration quality. More importantly, it substantially reduces the number of iterations required for convergence in most scenarios. For example, on the Building image with a Uniform kernel and $\sigma_g=10$, the iteration count decreases from $779$ to $430$. Although a slight increase in iterations is observed in a few cases, the proposed method still yields improved image quality. Overall, these results confirm that the proposed algorithm attains better restoration performance with enhanced convergence efficiency, thereby outperforming the PFDR method. These findings indicate that our approach not only delivers better restoration quality but also converges more efficiently. To provide a more intuitive comparison of the two algorithms, Figures \ref{psnr1}, \ref{psnr2}, and \ref{psnr3} present the objective function values and PSNR curves with respect to the number of iterations under different noise levels.  It can be observed that the proposed method achieves a significantly faster decrease in the objective function, converging to a stable solution within a small number of iterations, whereas PFDR exhibits a slower convergence behavior. Meanwhile, the PSNR curves indicate that the proposed algorithm provides a more rapid improvement in image quality during the early iterations and consistently attains higher or at least comparable final PSNR values. This advantage is maintained across different noise types and noise intensities, demonstrating strong robustness of the proposed method. Furthermore, Figures \ref{restored1} and \ref{restored2} show the images restored by the two algorithms.

\begin{table}[h]
\centering
\footnotesize
\caption{Numerical results for solving (\ref{constrained-l2-nuclear-tv}) in terms of PSNR (dB), SSIM, number of iterations, and CPU time (seconds).}
\begin{tabular}{c|c|c|ccc}
\hline
\multirow{2}[1]{*}{Kernel} & \multirow{2}[1]{*}{Image}&  \multirow{2}[1]{*}{Noise level} & Input  &  PFDR \cite{Tang2022JSC}   & Proposed algorithm  \\
& & & PSNR/SSIM & PSNR/SSIM/Iter/CPU &   PSNR/SSIM/Iter/CPU  \\
\hline
\hline
\multirow{6}[1]{*}{Uniform} & \multirow{2}[1]{*}{Building}
 & $\sigma_g = 10$ & $19.6706/0.2350$ & $23.5273$/$0.6026$/$779$/$263.3$ &  $23.6505$/$0.6100$/$430$/$111.4$ \\
& & $\sigma_g = 20$  & 18.1314/0.1500 & $22.3512$/$0.5184$/$481$/$166.1$  &    $22.4533$/$0.5252$/$264$/$67.3$   \\
\cline{2-6}
& \multirow{2}[1]{*}{Goldhill}
 & $\sigma_g = 10$ & $23.3278/0.3428$ & $26.9840$/$0.6462$/$341$/$140.3$ &  $27.0185$/$0.6469$/$238$/$68.8$ \\
& & $\sigma_g = 20$  & $20.3375/0.1768$ & $26.0737$/$0.6031$/$332$/$124.2$  &    $26.1113$/$0.6038$/$370$/$114.6$   \\
\cline{2-6}
& \multirow{2}[1]{*}{Castle}
 & $\sigma_g = 10$ & $21.3462/0.3296$ & $24.4988$/$0.7155$/$501$/$94.8$ &  $24.4956$/$0.7042$/$258$/$33.2$ \\
& & $\sigma_g = 20$  & $19.2419/0.1539$ & $23.5216$/$0.6841$/$298$/$56.1$  &    $23.5625$/$0.6740$/$293$/$41.1$   \\
\hline
\multirow{6}[1]{*}{Gaussian} & \multirow{2}[1]{*}{Building}
 & $\sigma_g = 10$ & $21.0123/0.3727$ & $23.0711$/$0.5728$/$799$/$330.1$ &  $23.0959$/$0.5750$/$395$/$103.3$ \\
& & $\sigma_g = 20$  & $19.0324$/0.2485 & $22.4699$/$0.5254$/$420$/$135.5$  &    $22.5317$/$0.5305$/$238$/$69.5$   \\
\cline{2-6}
& \multirow{2}[1]{*}{Goldhill}
 & $\sigma_g = 10$ & $24.5985/0.4310$ & $27.8662$/$0.6921$/$339$/$94.7$ &  $27.8934$/$0.6924$/$256$/$55.7$ \\
& & $\sigma_g = 20$  & $20.9215/0.2307$ & $26.8681$/$0.6440$/$348$/$101.6$  &    $26.8998$/$0.6444$/$426$/$95.6$   \\
\cline{2-6}
& \multirow{2}[1]{*}{Castle}
 & $\sigma_g = 10$ & $22.5061/0.3903$ & $24.6871$/$0.7396$/$551$/$110.7$ &  $24.6843$/$0.7341$/$327$/$44.8$ \\
& & $\sigma_g = 20$  & $19.9043/0.1968$ & $24.0225$/$0.7092$/$321$/$67.6$  &    $24.0392$/$0.7020$/$383$/$51.2$   \\
\hline
\end{tabular}\label{table-results}

\end{table}

\begin{figure}[H]
     \setlength{\abovecaptionskip}{0pt}
  \centering
  \makeatletter
    \subfigure[Uniform, $\sigma_g = 10$]{
        \scalebox{0.4}{\includegraphics{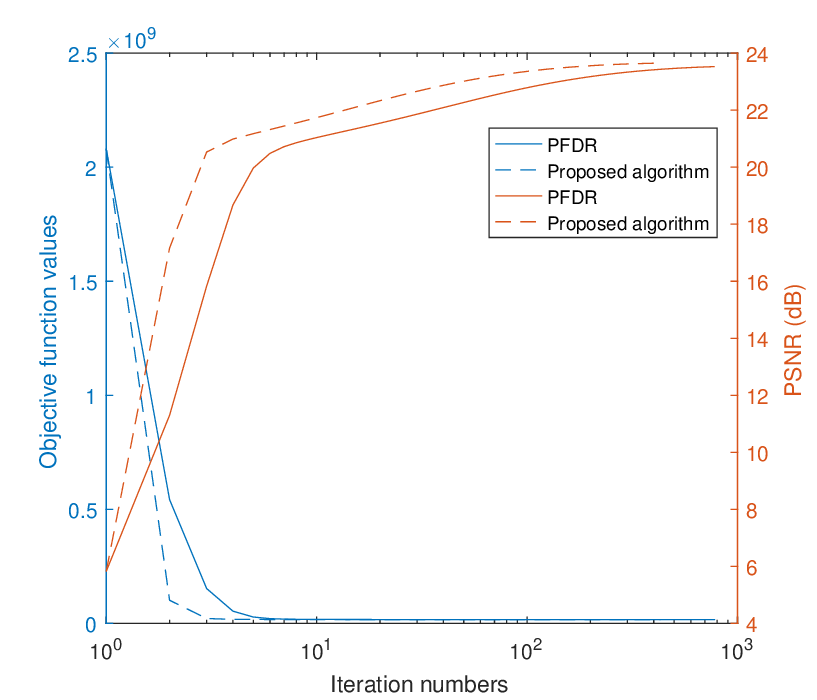}}
    } \hspace{-10pt}
      \subfigure[Uniform, $\sigma_g = 20$]{
        \scalebox{0.4}{\includegraphics{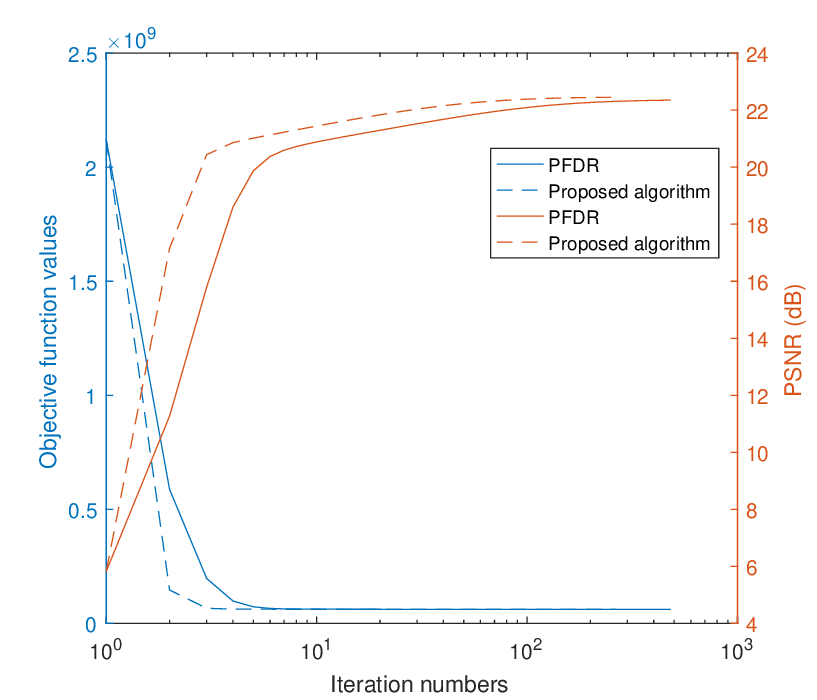}}
    } \\
    \subfigure[Gaussian, $\sigma_g = 10$]{
        \scalebox{0.4}{\includegraphics{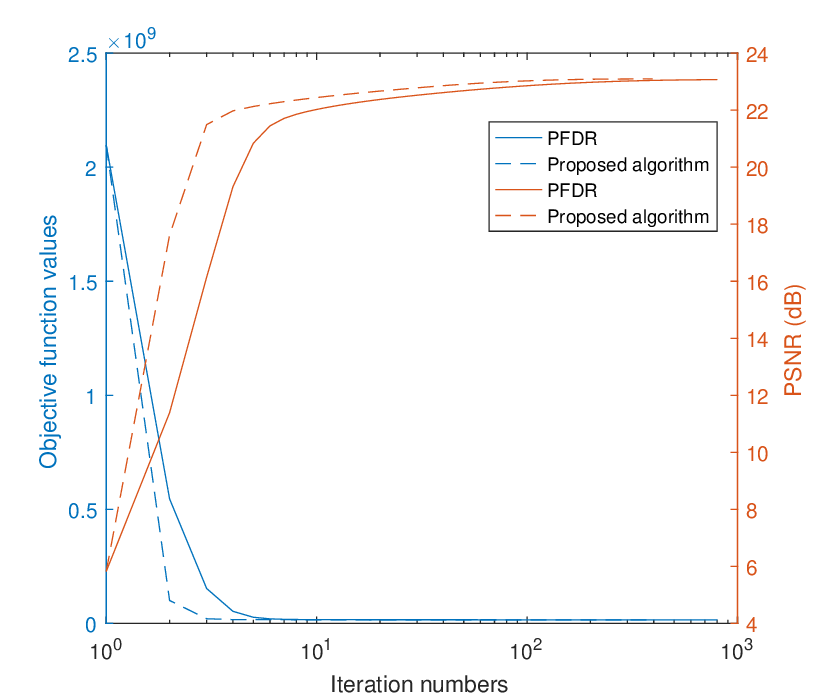}}
    } \hspace{-10pt}
      \subfigure[Gaussian, $\sigma_g = 20$]{
        \scalebox{0.4}{\includegraphics{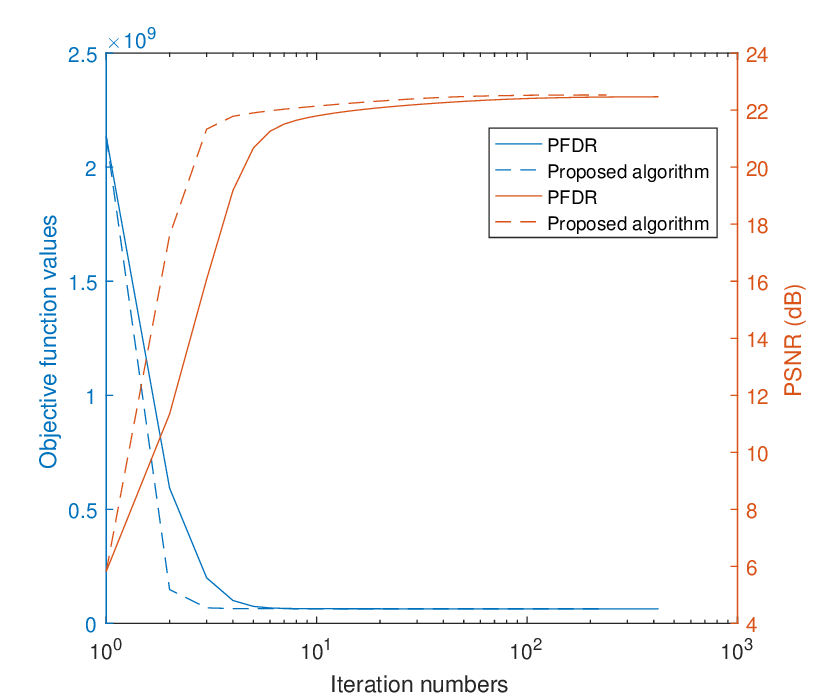}}
    }\\
    \caption{Objective function values and PSNR versus the number of iterations for the test image ``Building".}
    \label{psnr1}
\end{figure}

\begin{figure}[H]
     \setlength{\abovecaptionskip}{0pt}
  \centering
  \makeatletter
    \subfigure[Uniform, $\sigma_g = 10$]{
        \scalebox{0.4}{\includegraphics{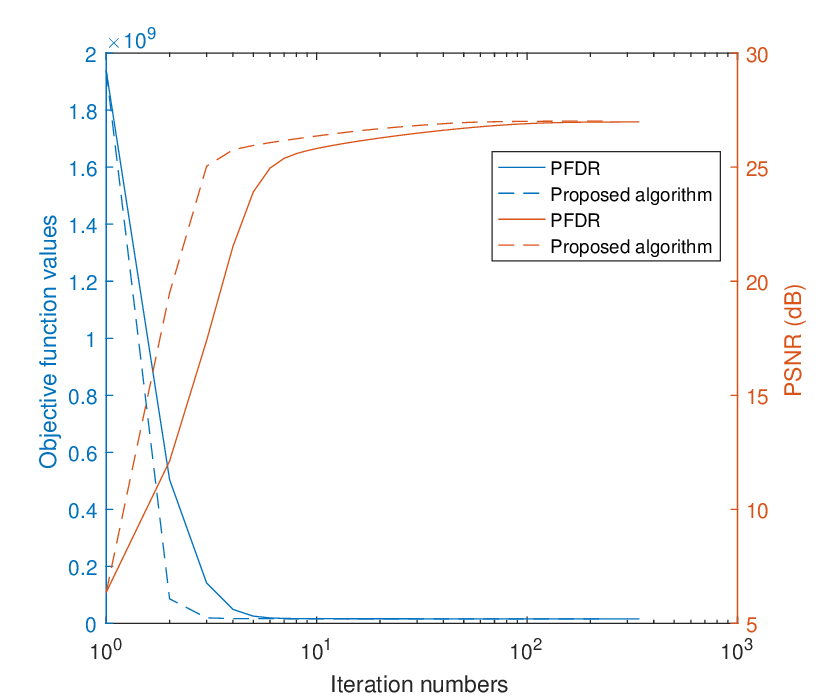}}
    } \hspace{-10pt}
      \subfigure[Uniform, $\sigma_g = 20$]{
        \scalebox{0.4}{\includegraphics{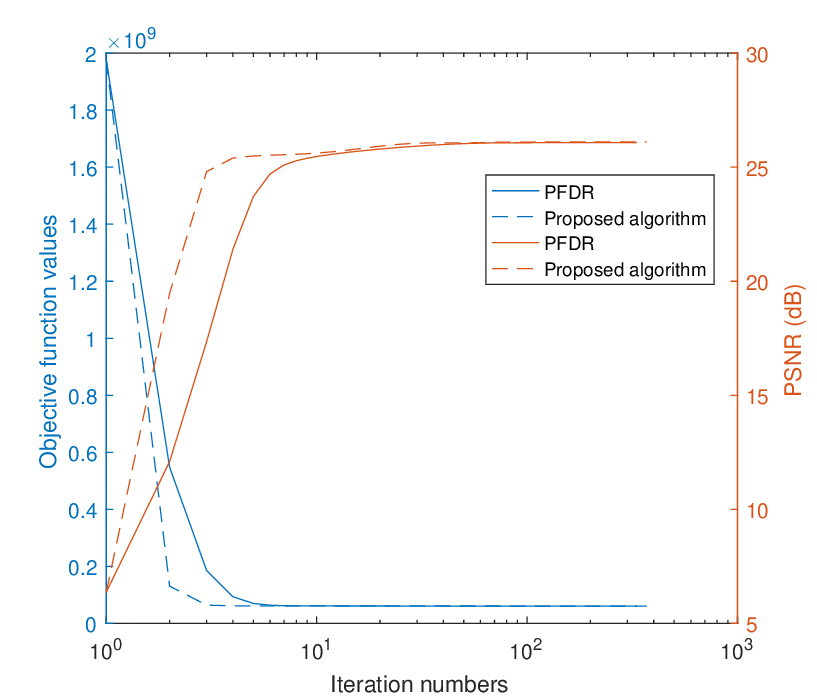}}
    } \\
    \subfigure[Gaussian, $\sigma_g = 10$]{
        \scalebox{0.4}{\includegraphics{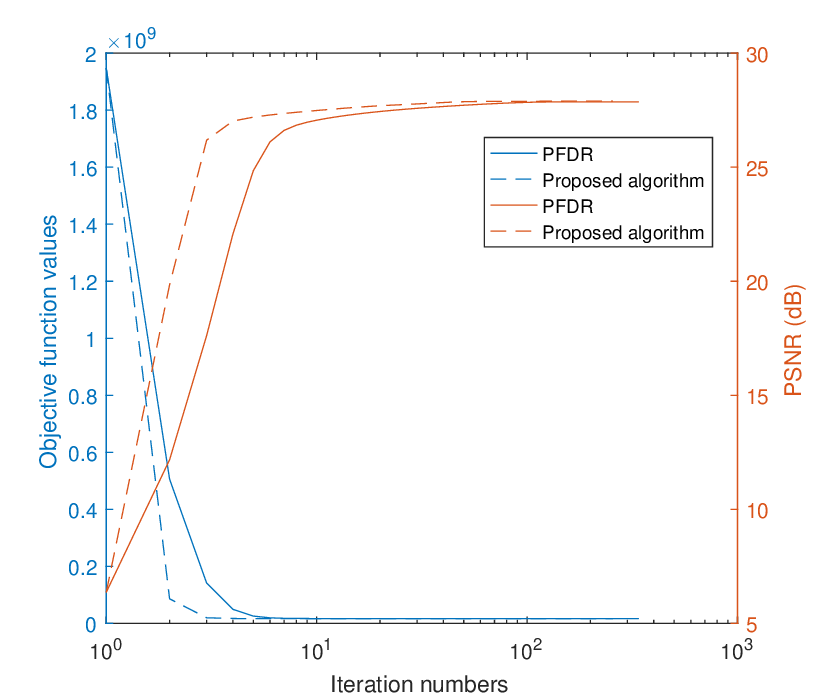}}
    } \hspace{-10pt}
      \subfigure[Gaussian, $\sigma_g = 20$]{
        \scalebox{0.4}{\includegraphics{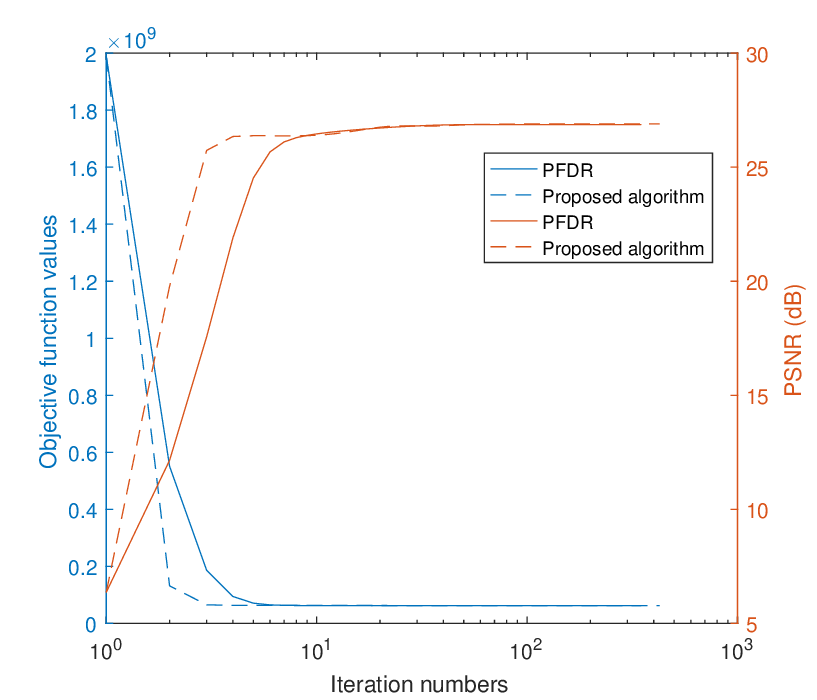}}
    }\\
    \caption{Objective function values and PSNR versus the number of iterations for the test image ``Goldhill".}
    \label{psnr2}
\end{figure}

\begin{figure}[H]
     \setlength{\abovecaptionskip}{0pt}
  \centering
  \makeatletter
    \subfigure[Uniform, $\sigma_g = 10$]{
        \scalebox{0.4}{\includegraphics{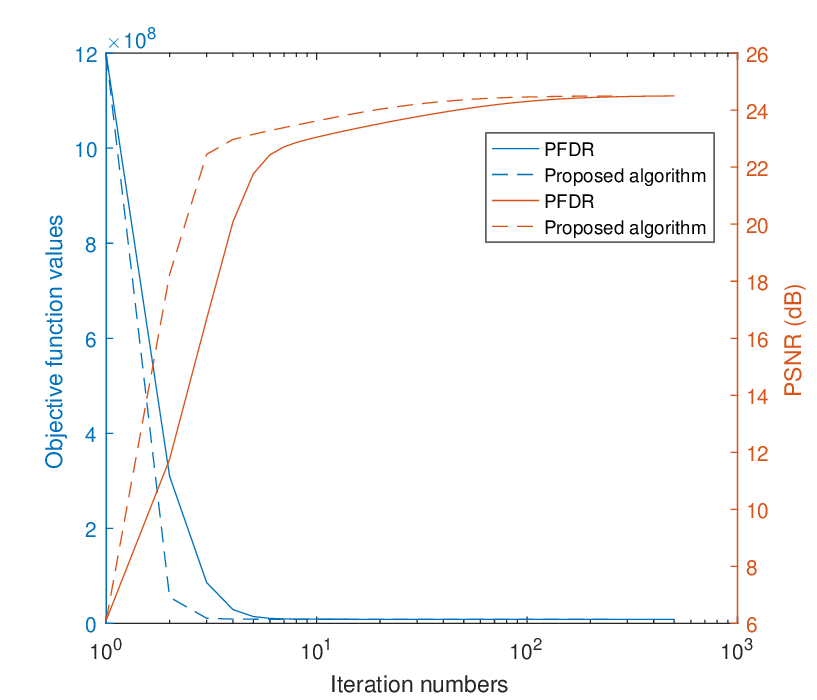}}
    } \hspace{-10pt}
      \subfigure[Uniform, $\sigma_g = 20$]{
        \scalebox{0.4}{\includegraphics{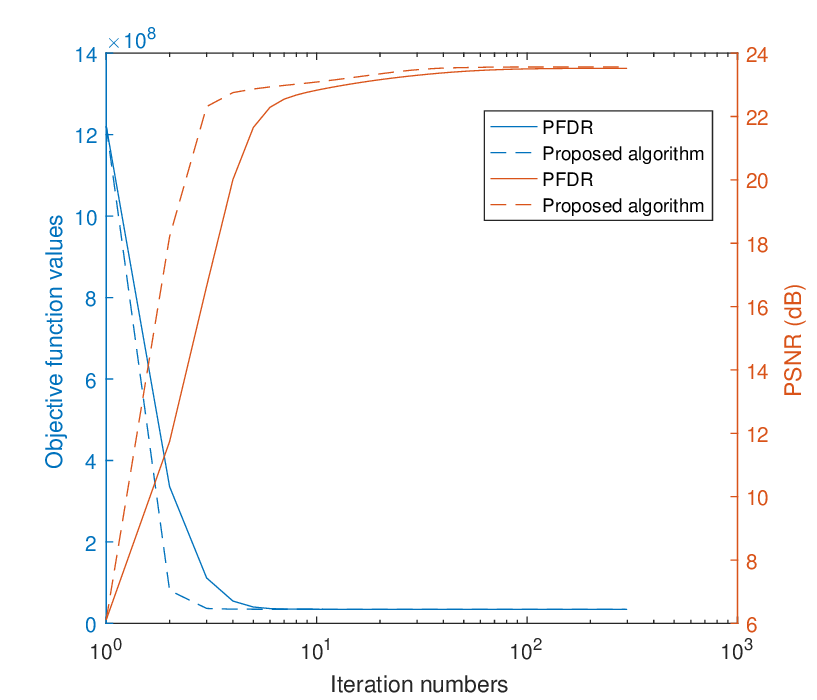}}
    } \\
    \subfigure[Gaussian, $\sigma_g = 10$]{
        \scalebox{0.4}{\includegraphics{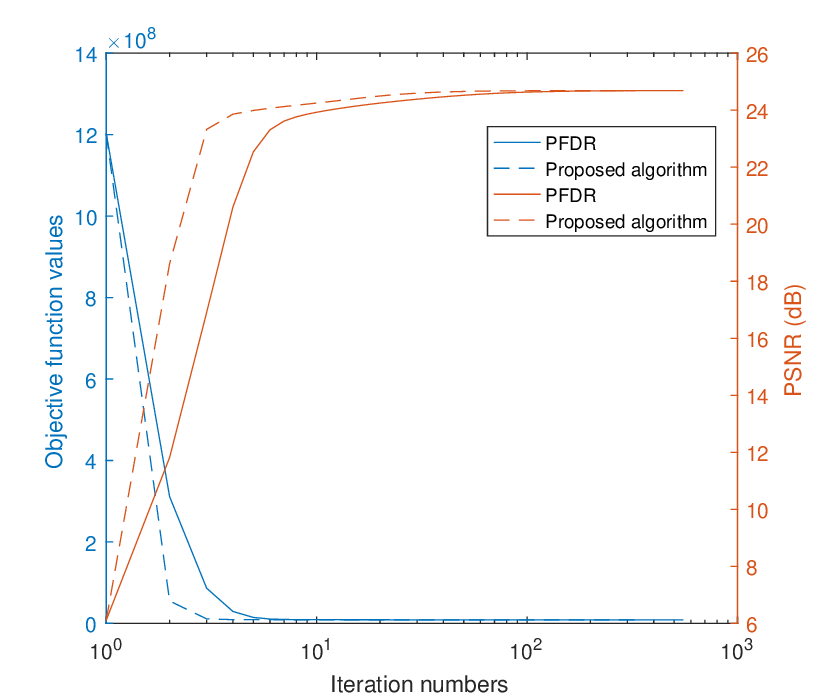}}
    } \hspace{-10pt}
      \subfigure[Gaussian, $\sigma_g = 20$]{
        \scalebox{0.4}{\includegraphics{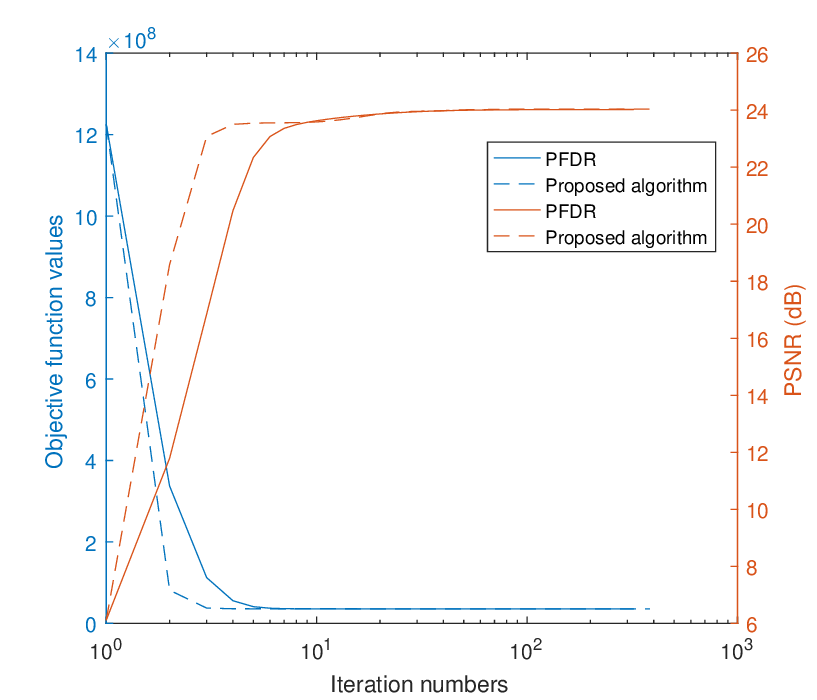}}
    }\\
    \caption{Objective function values and PSNR versus the number of iterations for the test images ``Castle".}
    \label{psnr3}
\end{figure}

\begin{figure}[H]
     \setlength{\abovecaptionskip}{0pt}
  \centering
  \makeatletter
    \subfigure[Uniform, $\sigma_g = 10$]{
        \scalebox{0.35}{\includegraphics{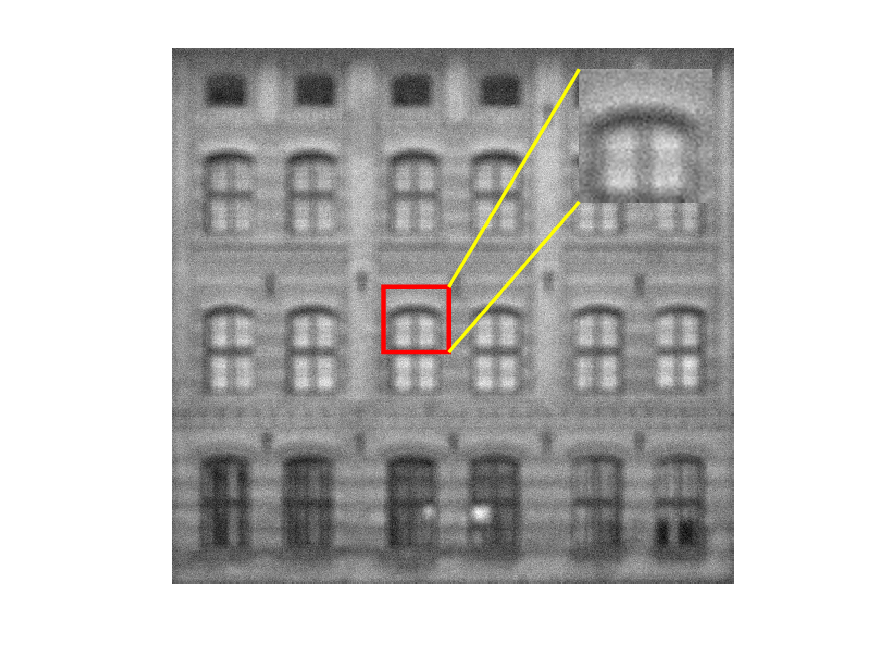}}
    } \hspace{-30pt}
      \subfigure[PFDR]{
        \scalebox{0.35}{\includegraphics{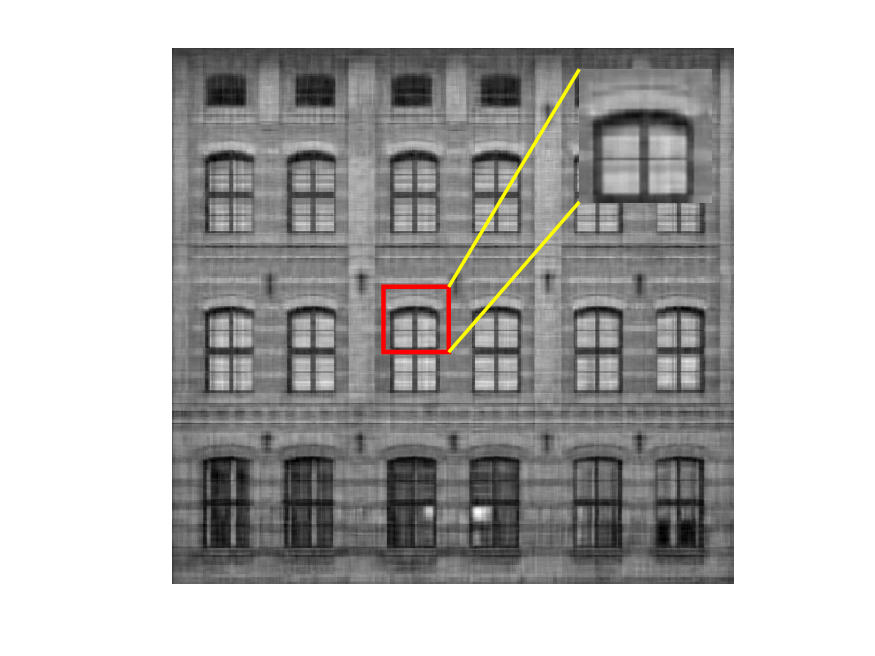}}
    } \hspace{-30pt}
    \subfigure[Proposed algorithm]{
        \scalebox{0.35}{\includegraphics{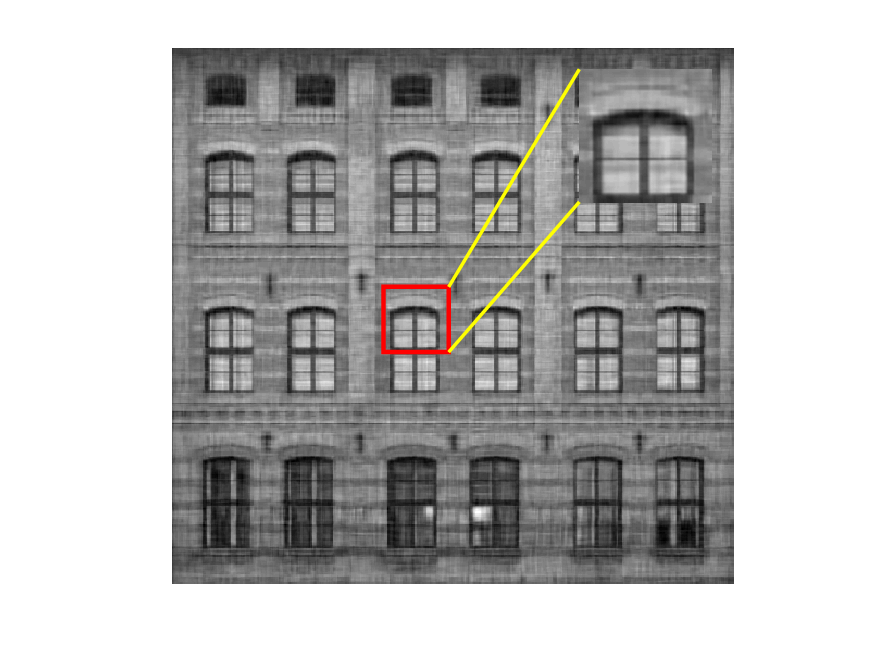}}
    }\\
    \subfigure[Uniform, $\sigma_g = 20$]{
        \scalebox{0.35}{\includegraphics{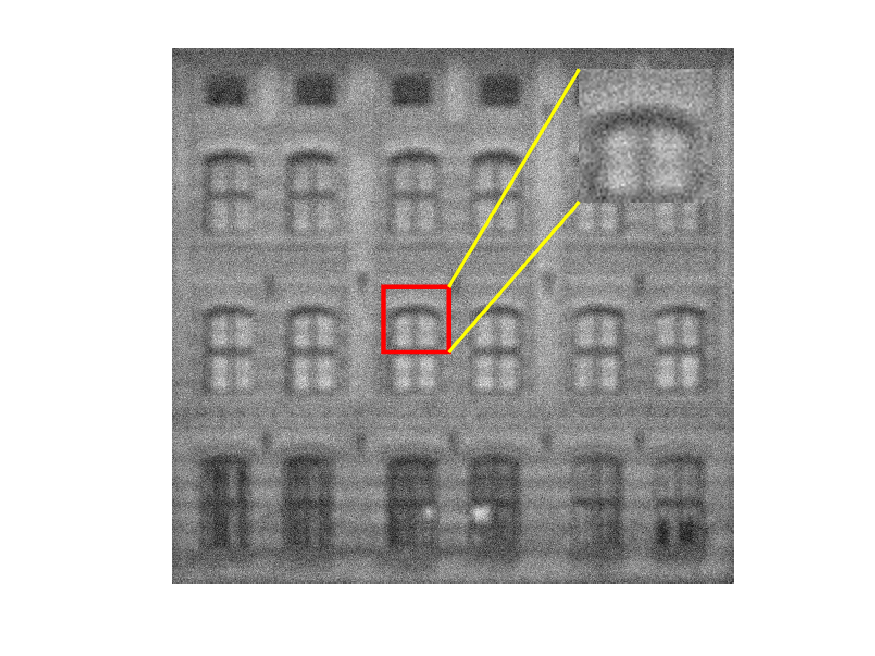}}
    } \hspace{-30pt}
      \subfigure[PFDR]{
        \scalebox{0.35}{\includegraphics{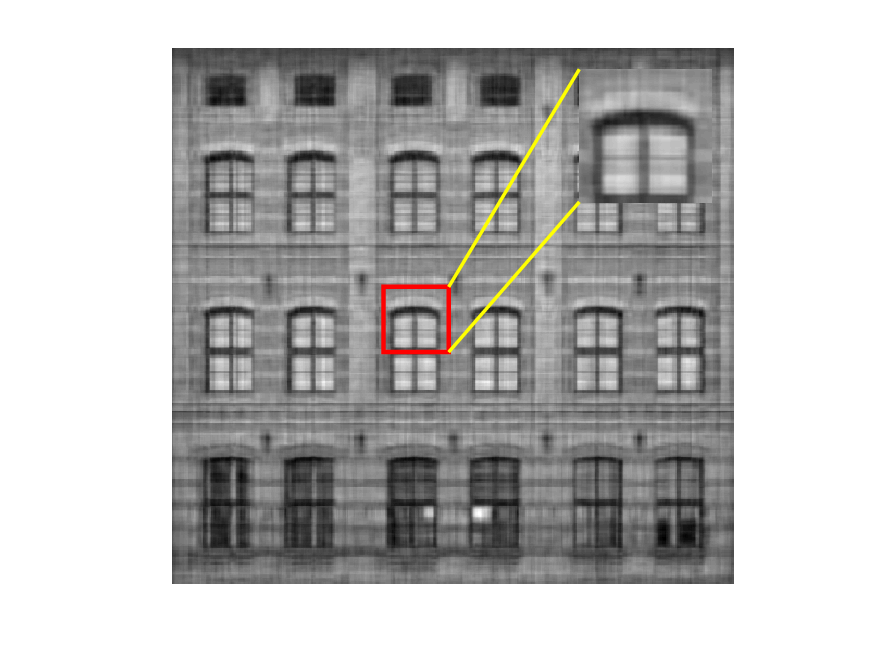}}
    } \hspace{-30pt}
    \subfigure[Proposed algorithm]{
        \scalebox{0.35}{\includegraphics{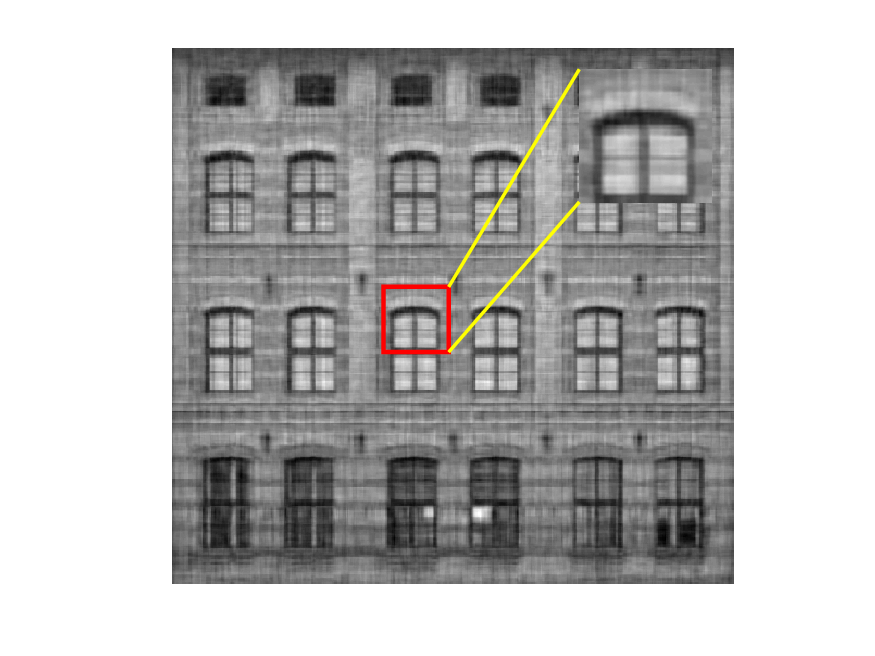}}
    }\\
     \subfigure[Gaussian, $\sigma_g = 10$]{
        \scalebox{0.35}{\includegraphics{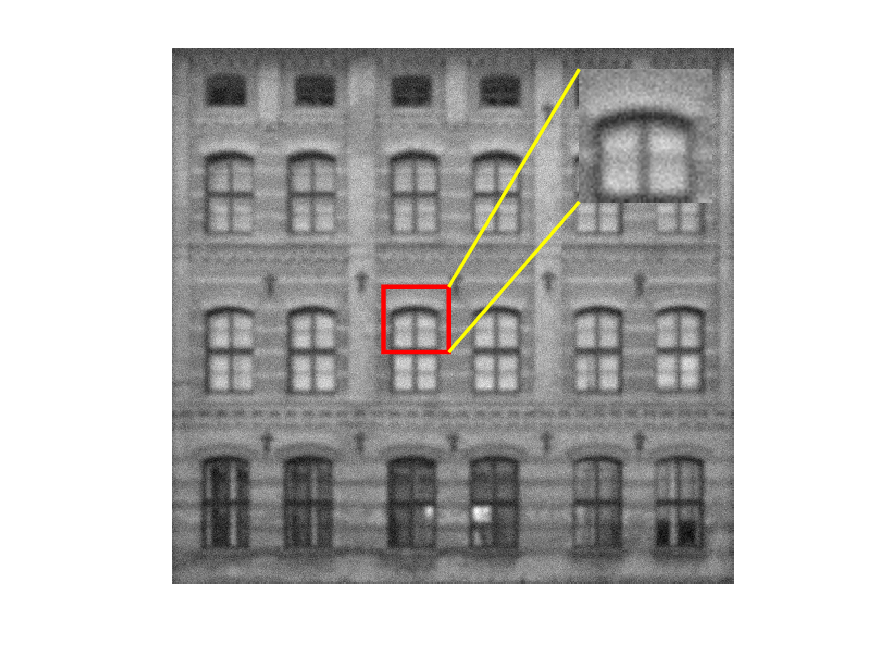}}
    } \hspace{-30pt}
      \subfigure[PFDR]{
        \scalebox{0.35}{\includegraphics{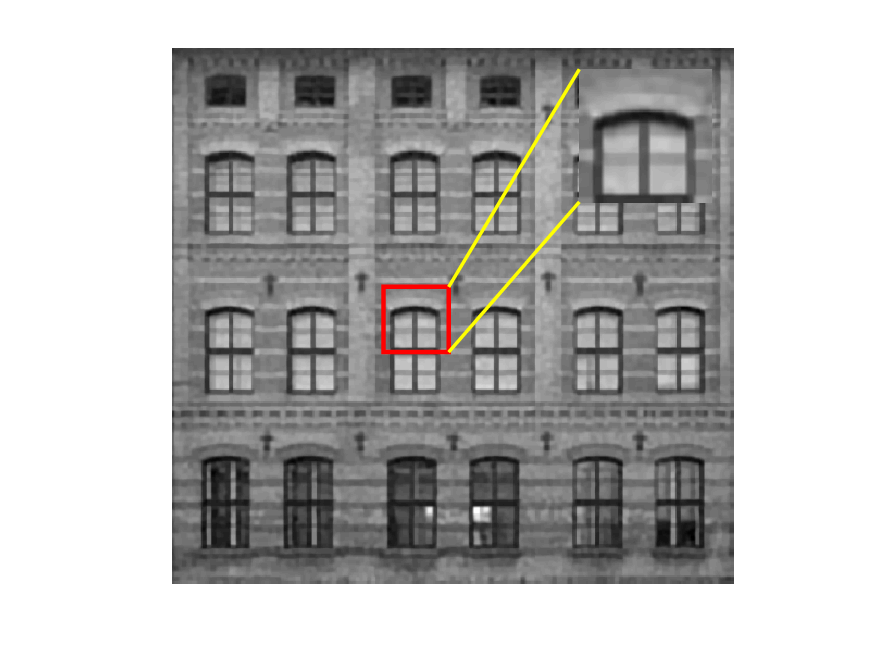}}
    } \hspace{-30pt}
    \subfigure[Proposed algorithm]{
        \scalebox{0.35}{\includegraphics{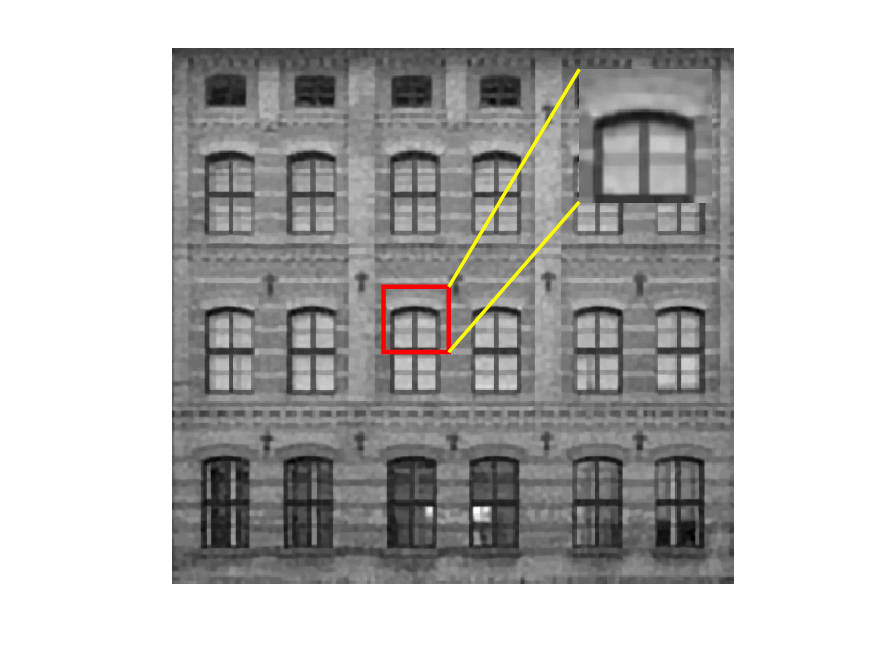}}
    }\\
    \subfigure[Gaussian, $\sigma_g = 20$]{
        \scalebox{0.35}{\includegraphics{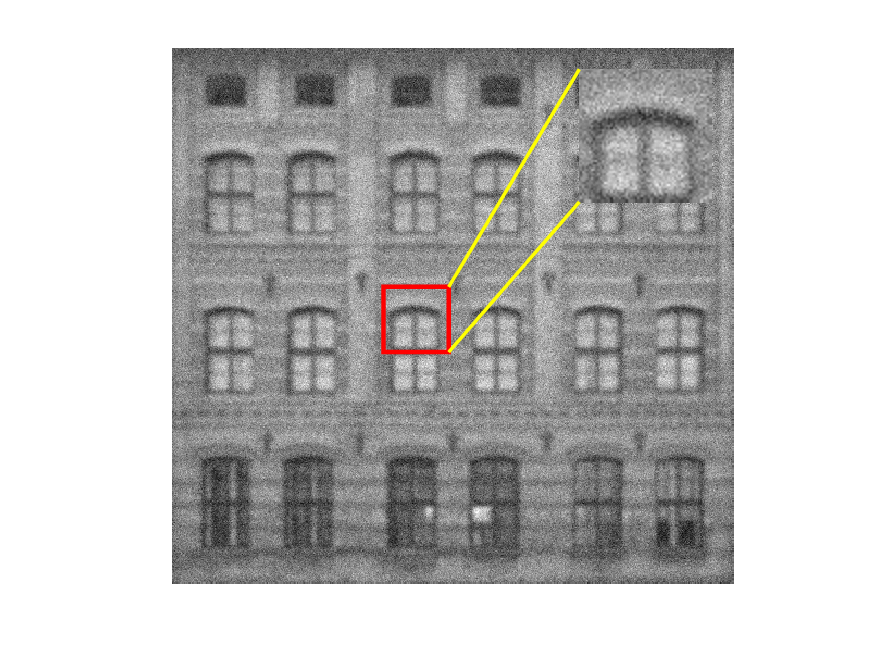}}
    } \hspace{-30pt}
      \subfigure[PFDR]{
        \scalebox{0.35}{\includegraphics{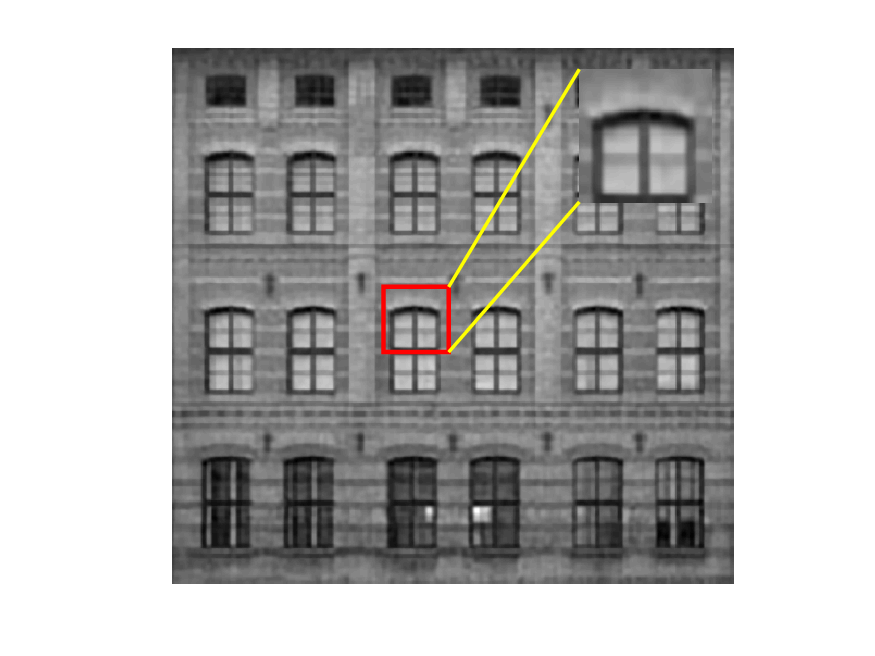}}
    } \hspace{-30pt}
    \subfigure[Proposed algorithm]{
        \scalebox{0.35}{\includegraphics{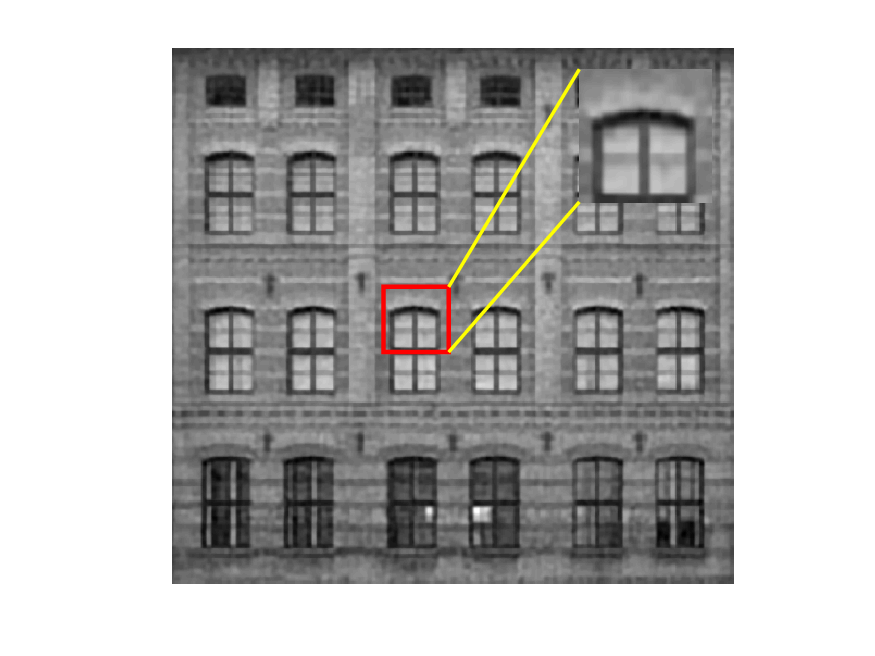}}
    }\\
    \caption{Corrupted and restored results of  the ``Building" image. The first column presents the corrupted images, whereas the second and third columns display the images restored using PFDR \cite{Tang2022JSC} and the proposed algorithm, respectively. }
    \label{restored1}
\end{figure}

\begin{figure}[H]
     \setlength{\abovecaptionskip}{0pt}
  \centering
  \makeatletter
    \subfigure[Uniform, $\sigma_g = 10$]{
        \scalebox{0.35}{\includegraphics{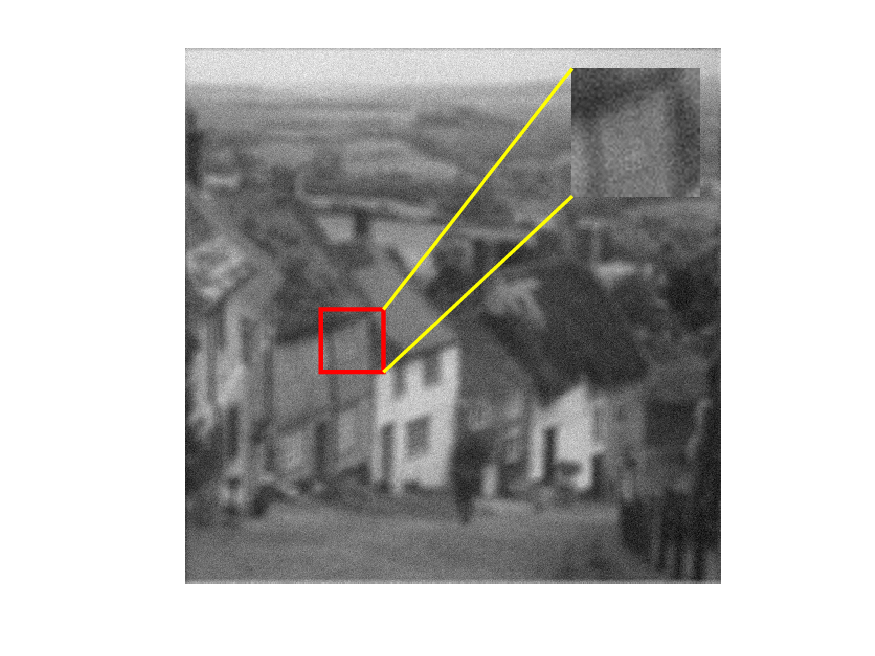}}
    } \hspace{-30pt}
      \subfigure[PFDR]{
        \scalebox{0.35}{\includegraphics{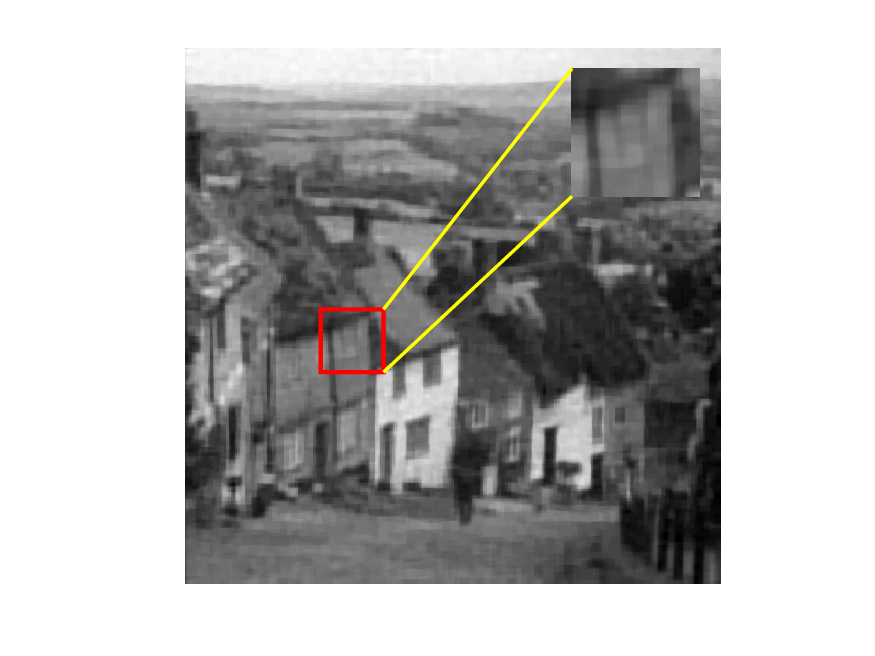}}
    } \hspace{-30pt}
    \subfigure[Proposed algorithm]{
        \scalebox{0.35}{\includegraphics{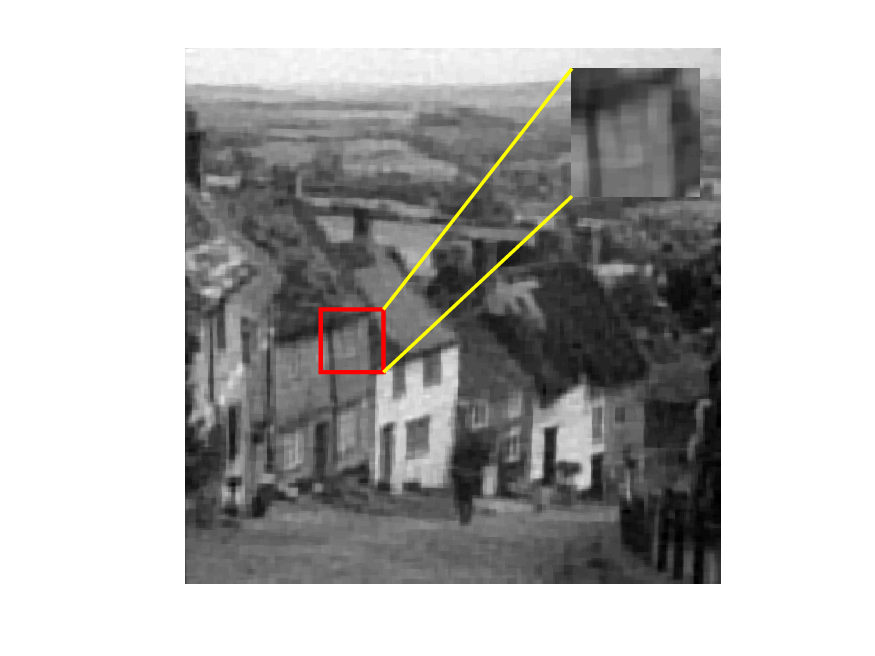}}
    }\\
    \subfigure[Uniform, $\sigma_g = 20$]{
        \scalebox{0.35}{\includegraphics{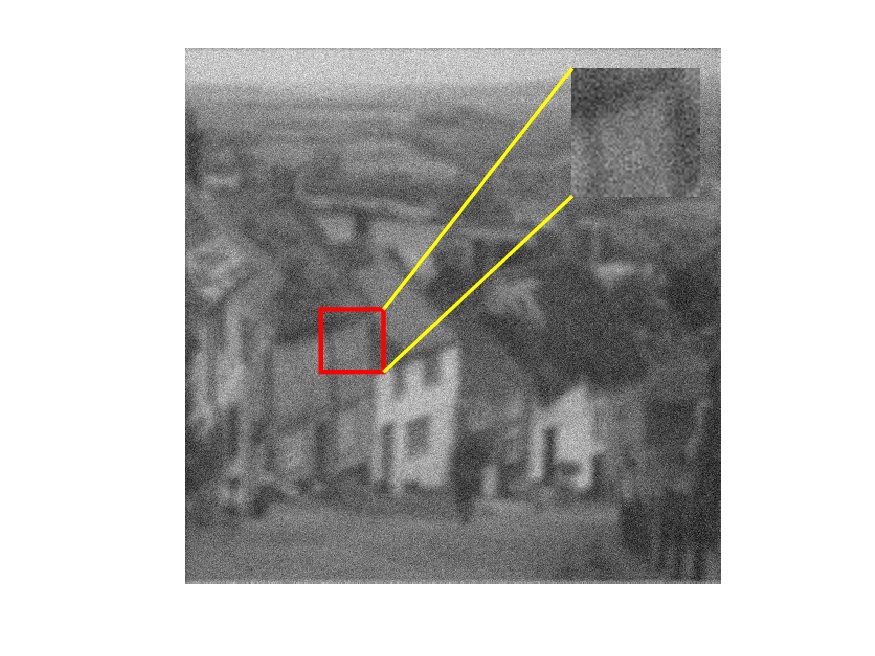}}
    } \hspace{-30pt}
      \subfigure[PFDR]{
        \scalebox{0.35}{\includegraphics{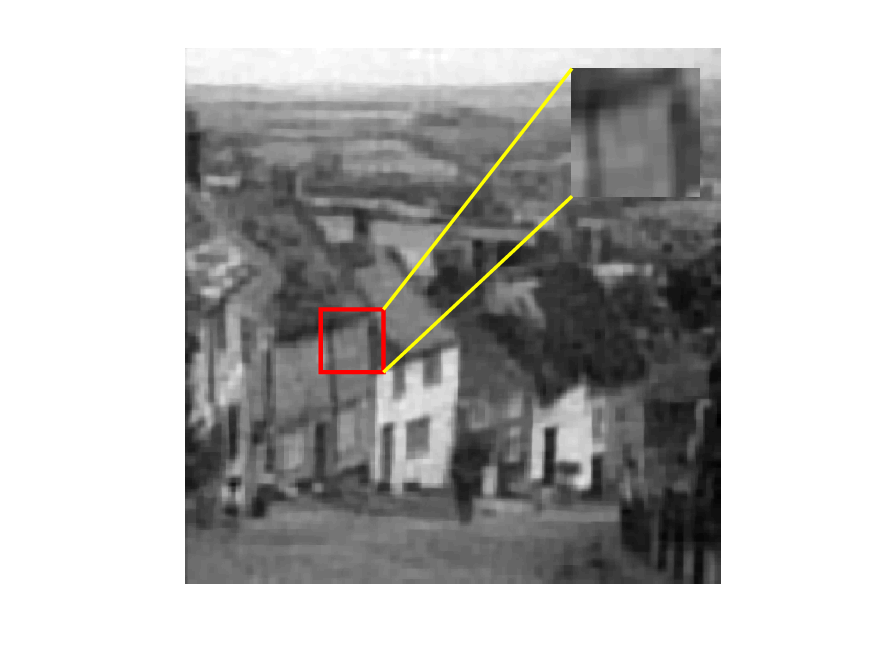}}
    } \hspace{-30pt}
    \subfigure[Proposed algorithm]{
        \scalebox{0.35}{\includegraphics{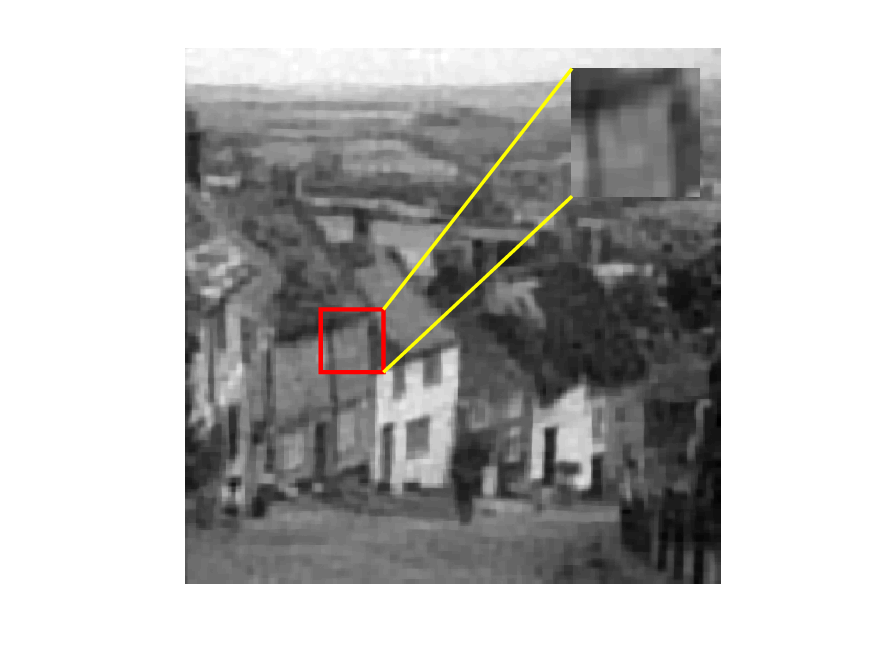}}
    }\\
     \subfigure[Gaussian, $\sigma_g = 10$]{
        \scalebox{0.35}{\includegraphics{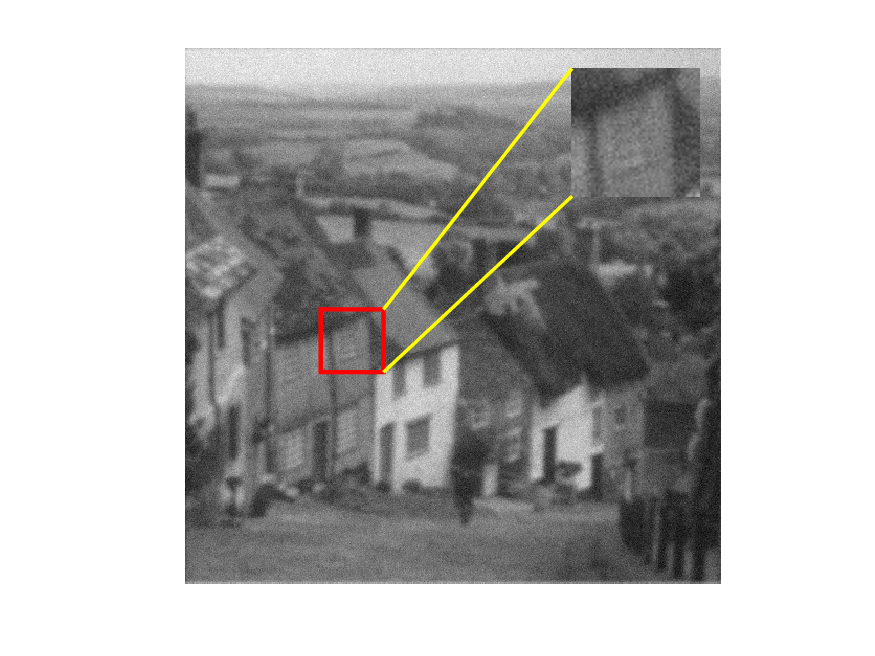}}
    } \hspace{-30pt}
      \subfigure[PFDR]{
        \scalebox{0.35}{\includegraphics{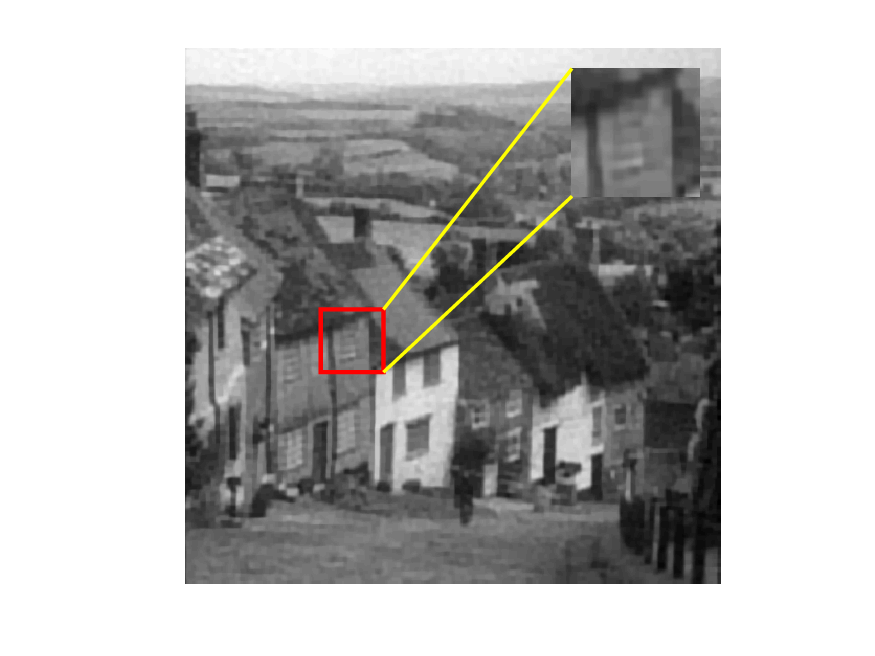}}
    } \hspace{-30pt}
    \subfigure[Proposed algorithm]{
        \scalebox{0.35}{\includegraphics{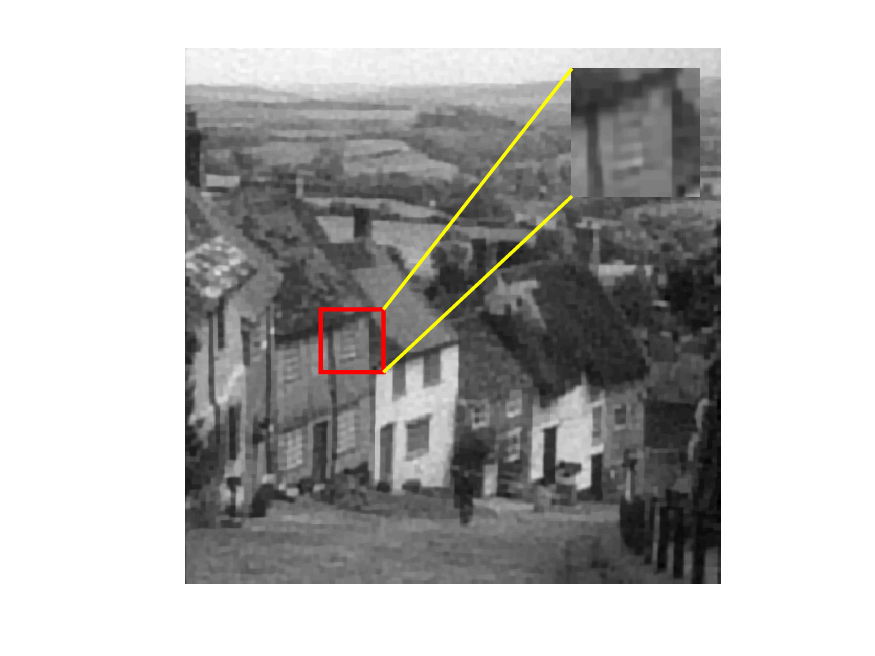}}
    }\\
    \subfigure[Gaussian, $\sigma_g = 20$]{
        \scalebox{0.35}{\includegraphics{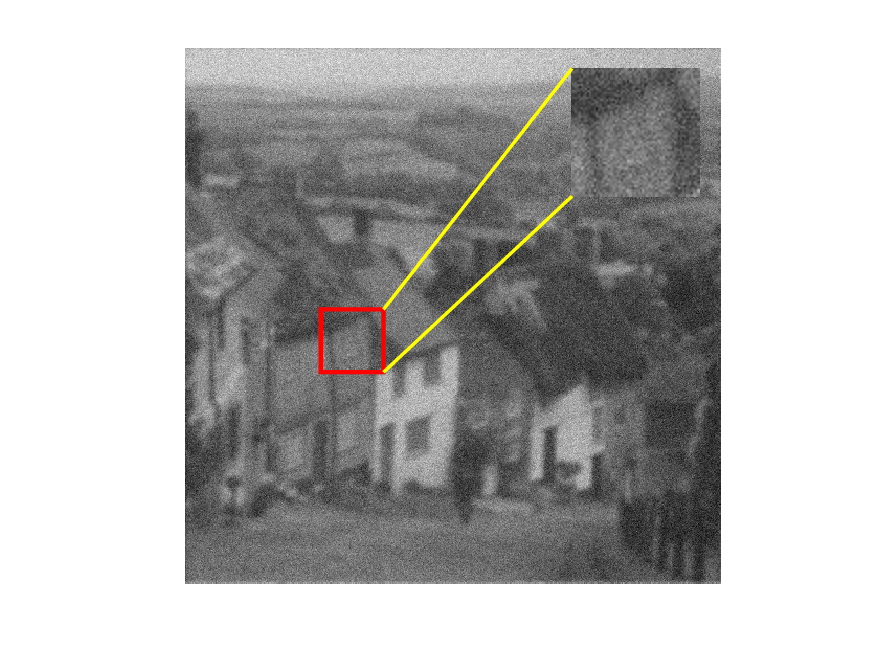}}
    } \hspace{-30pt}
      \subfigure[PFDR]{
        \scalebox{0.35}{\includegraphics{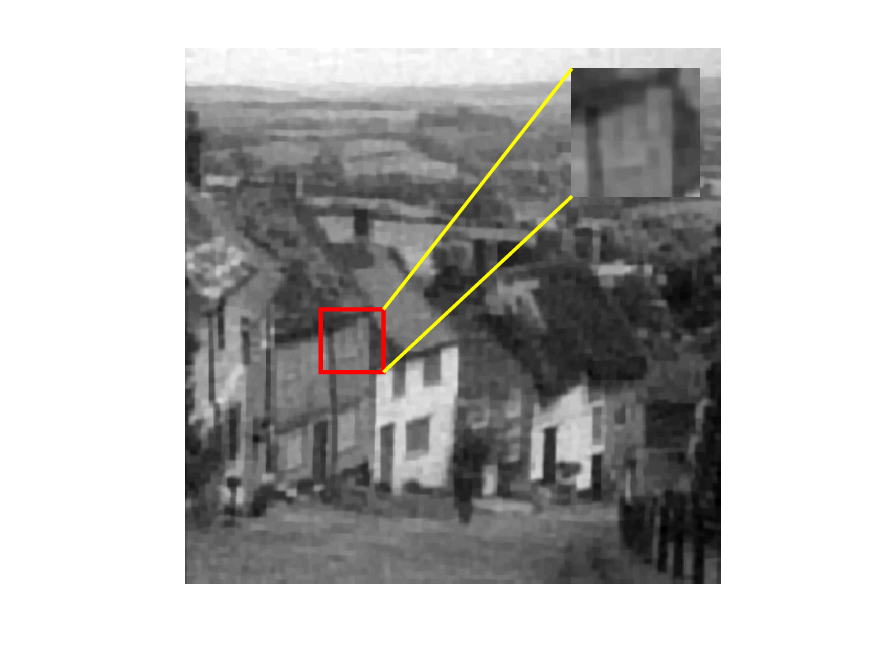}}
    } \hspace{-30pt}
    \subfigure[Proposed algorithm]{
        \scalebox{0.35}{\includegraphics{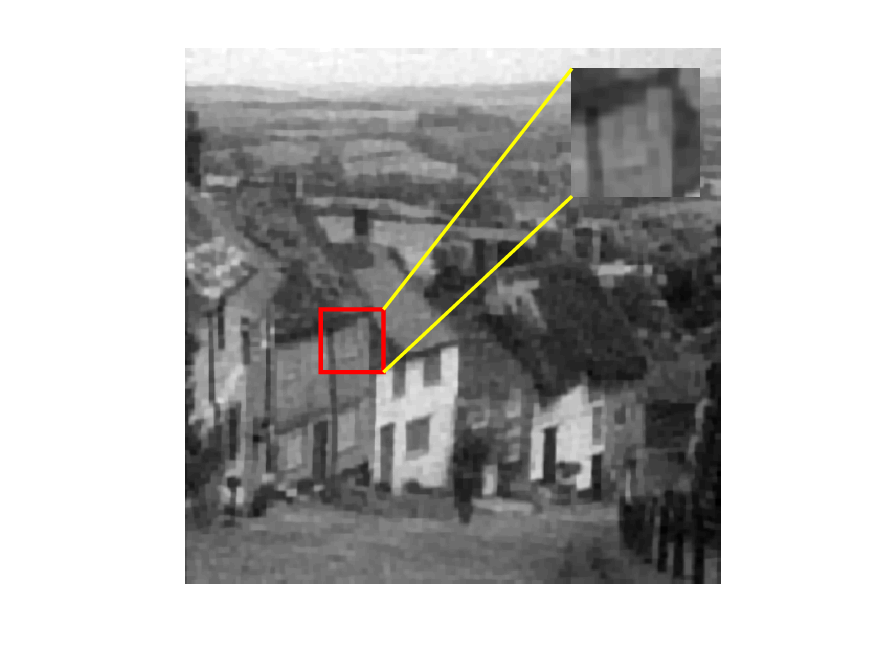}}
    }\\
    \caption{Corrupted and restored results of the ``Goldhill" image. The first column presents the corrupted images, whereas the second and third columns display the images restored using PFDR \cite{Tang2022JSC} and the proposed algorithm, respectively.}
    \label{restored2}
\end{figure}

\begin{figure}[H]
     \setlength{\abovecaptionskip}{0pt}
  \centering
  \makeatletter
    \subfigure[Uniform, $\sigma_g = 10$]{
        \scalebox{0.35}{\includegraphics{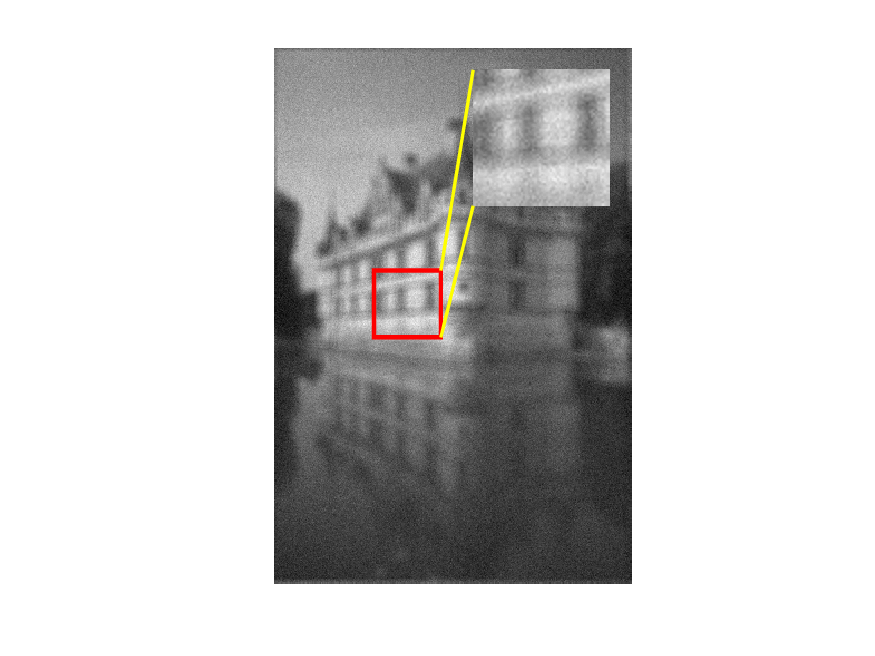}}
    } \hspace{-35pt}
      \subfigure[PFDR]{
        \scalebox{0.35}{\includegraphics{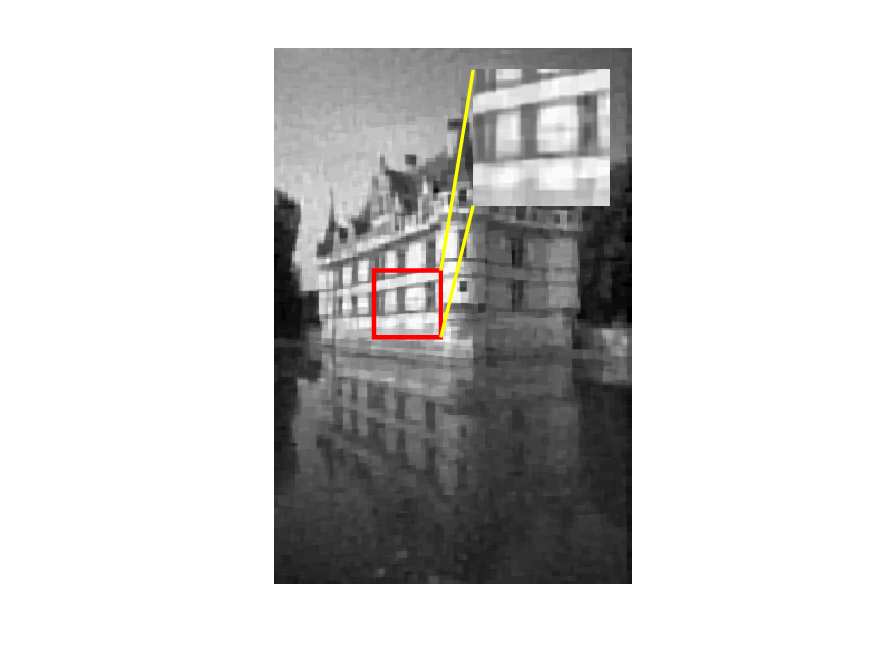}}
    } \hspace{-35pt}
    \subfigure[Proposed algorithm]{
        \scalebox{0.35}{\includegraphics{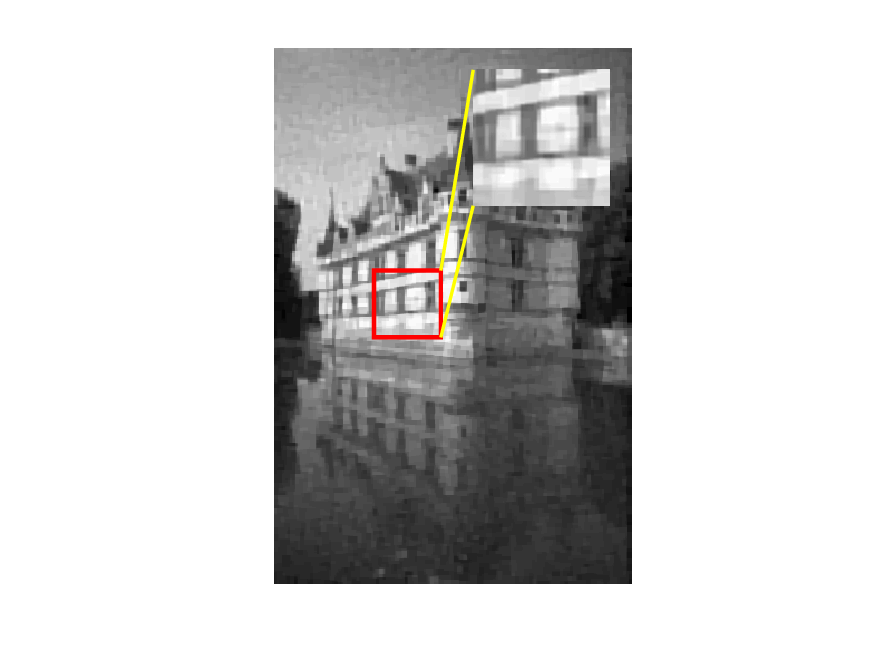}}
    }\\
    \subfigure[Uniform, $\sigma_g = 20$]{
        \scalebox{0.35}{\includegraphics{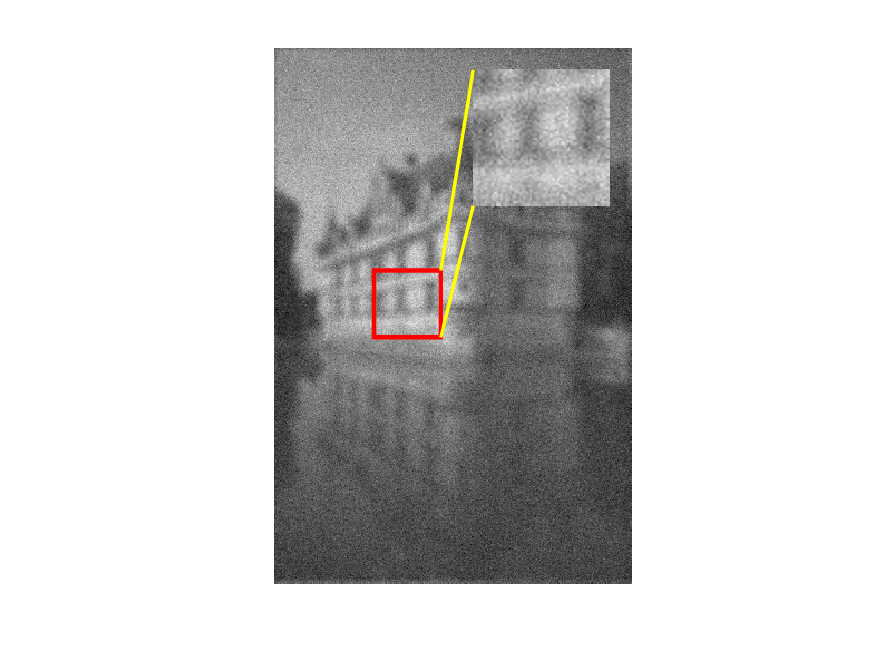}}
    } \hspace{-35pt}
      \subfigure[PFDR]{
        \scalebox{0.35}{\includegraphics{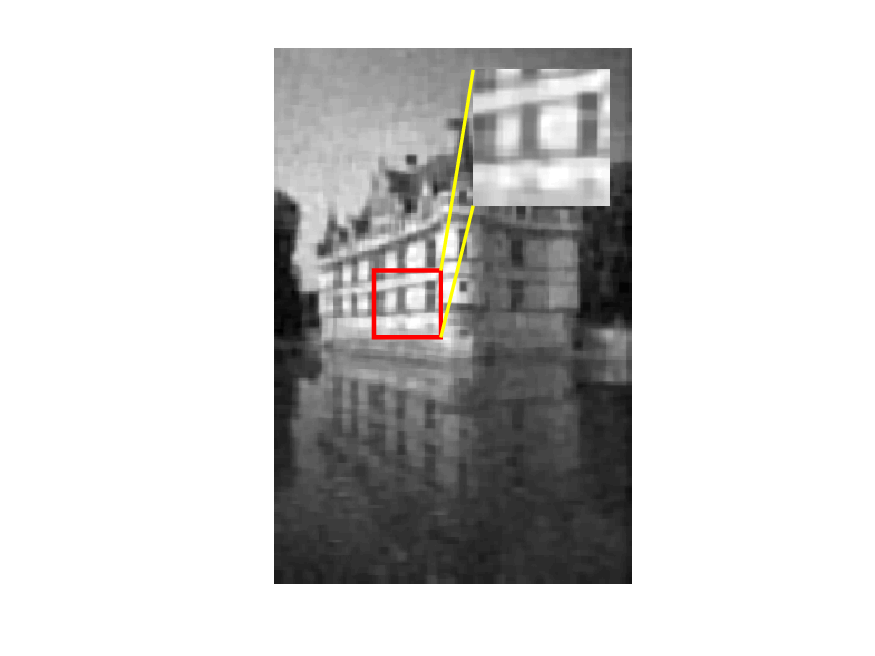}}
    } \hspace{-35pt}
    \subfigure[Proposed algorithm]{
        \scalebox{0.35}{\includegraphics{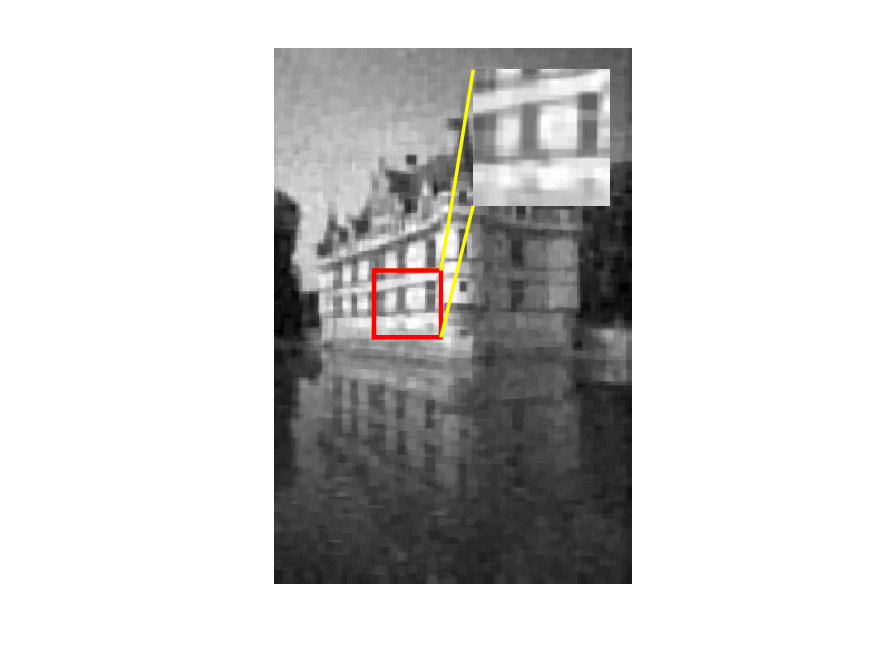}}
    }\\
     \subfigure[Gaussian, $\sigma_g = 10$]{
        \scalebox{0.35}{\includegraphics{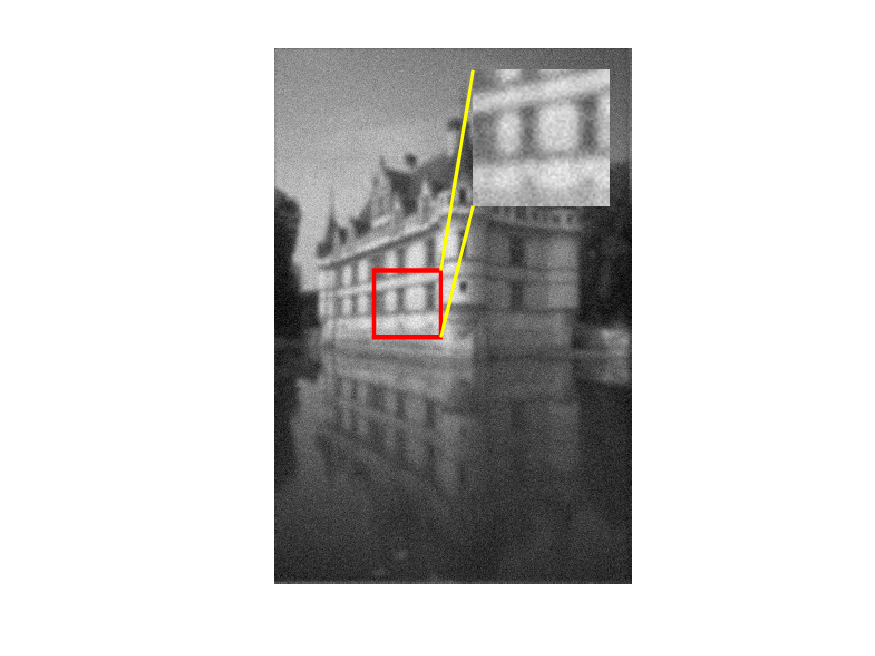}}
    } \hspace{-35pt}
      \subfigure[PFDR]{
        \scalebox{0.35}{\includegraphics{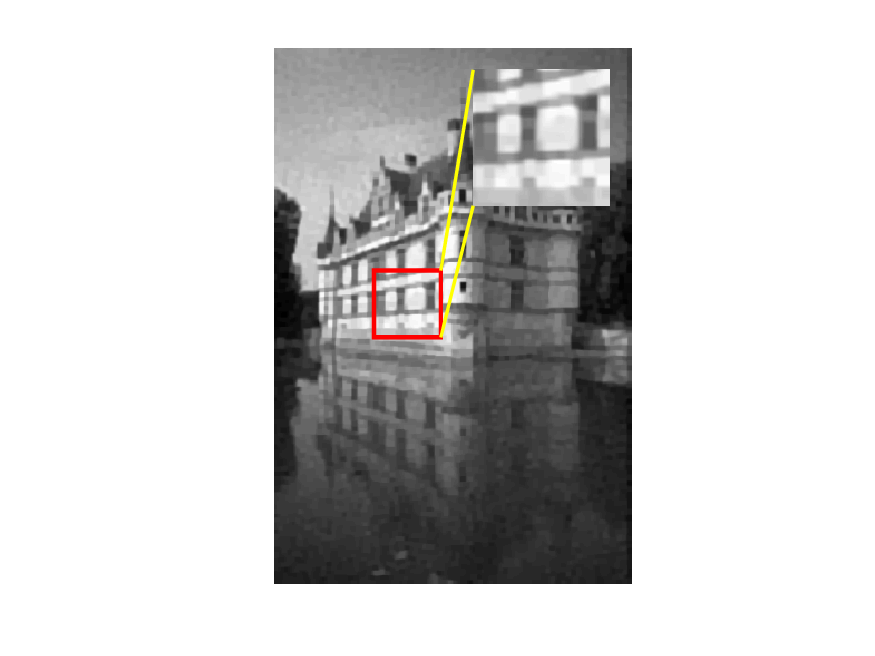}}
    } \hspace{-35pt}
    \subfigure[Proposed algorithm]{
        \scalebox{0.35}{\includegraphics{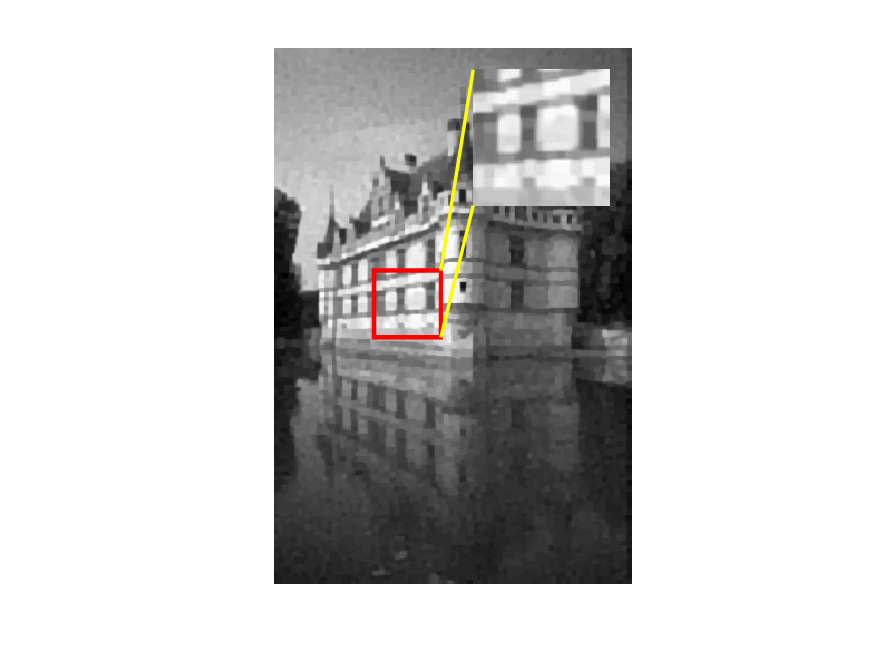}}
    }\\
    \subfigure[Gaussian, $\sigma_g = 20$]{
        \scalebox{0.35}{\includegraphics{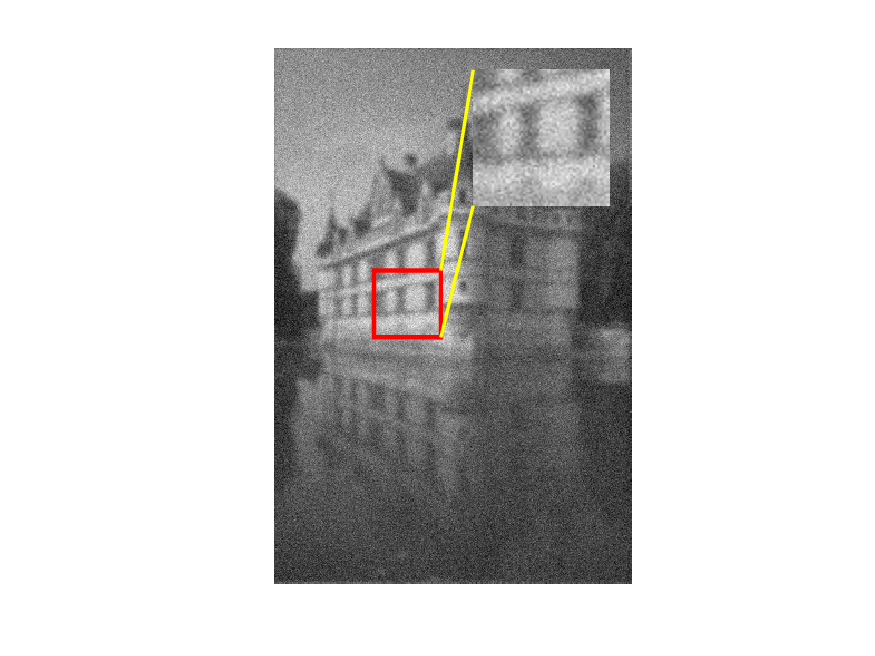}}
    } \hspace{-35pt}
      \subfigure[PFDR]{
        \scalebox{0.35}{\includegraphics{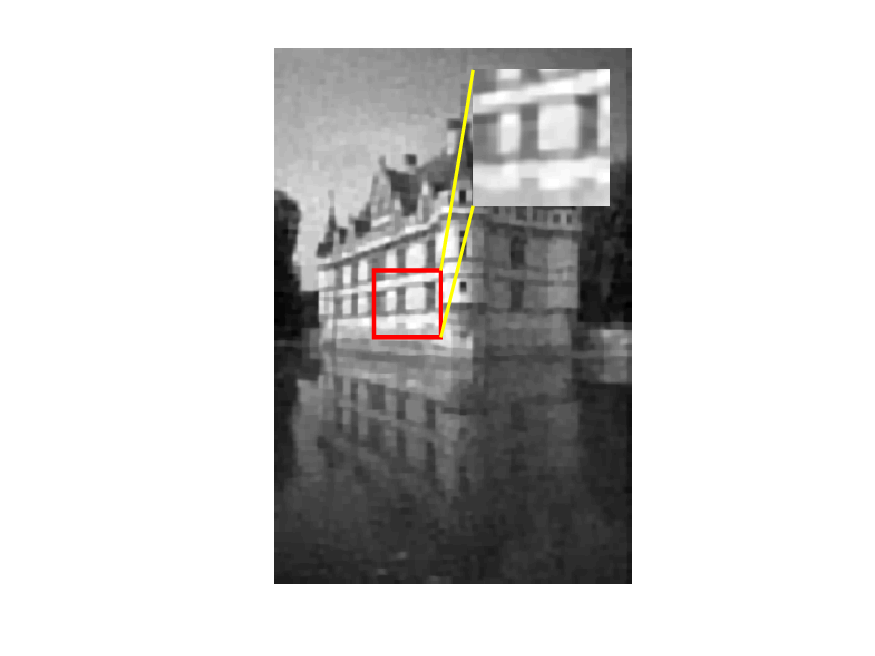}}
    } \hspace{-35pt}
    \subfigure[Proposed algorithm]{
        \scalebox{0.35}{\includegraphics{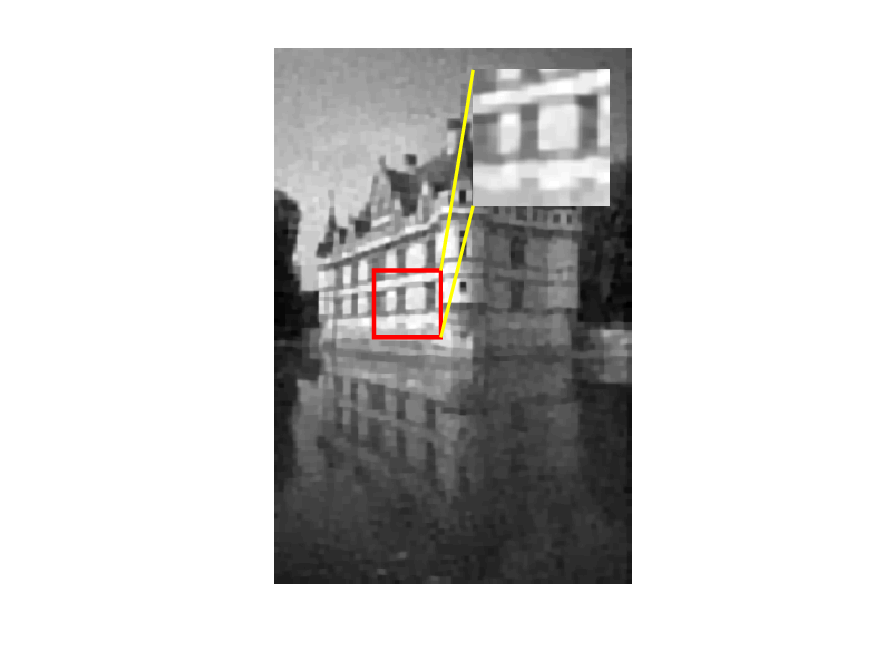}}
    }\\
    \caption{Corrupted and restored results of  the ``Castle" image. The first column presents the corrupted images, whereas the second and third columns display the images restored using PFDR \cite{Tang2022JSC} and the proposed algorithm, respectively. }
    \label{restored3}
\end{figure}


\subsection{Image denoising task}

In this subsection, we consider a general MC-TV image denoising problem, which is defined by
\begin{equation}\label{MC-denoising}
\begin{aligned}
& \min_{x\in R^{m\times n}}\, \frac{1}{2}\| x -b \|_{F}^{2} + \mu_1 \Psi_{a}^{MC}(Dx) + \mu_2 \|x\|_{*}, \\
& s.t. \, x \in C,
\end{aligned}
\end{equation}
where $b\in R^{m\times n}$ is the observed noisy image, $\Psi_{a}^{MC}(\cdot) = \sum \psi_{a}(\cdot)$ ($\psi_{a}$ denotes the MC penalty), $D$ denotes the first-order difference operator, $\|x\|_{*}$ denotes the nuclear norm, $C = \{x_{ij}\in R^{m\times n}: 0 \leq x_{ij} \leq 255, i= 1, \cdots, m,j =1,\cdots, n\}$, and $\mu_1 >0, \mu_2 >0$ are two regularization parameters. When the constraint set $C =R^{m\times n}$ and $\mu_2 =0$, (\ref{MC-denoising}) reduces to the original MC-TV, which was proposed by Selesnick et al. \cite{Selesnick-2020-JMIV}. Let the Moreau envelope of the function $f$ is defined as $f^{M}(x) = inf_{v} \{f(v) + \frac{1}{2}\|x-v\|^2\}$. It follows from the convex-nonconvex method, the general MC-TV problem (\ref{MC-denoising}) can be transformed into the following formulation,
\begin{equation}\label{MC-denoising2}
\min_{x\in R^{m\times n}}\, \frac{1}{2}\| x -b \|_{F}^{2} + \mu_1 \|Dx\|_{1} - a \mu_1 (\frac{1}{a}\|\cdot\|_{1})^{M}(Dx)  + \mu_2 \|x\|_{*} + \delta_{C}(x), \\
\end{equation}
which is convex under the condition of $0\leq a \leq \frac{1}{8\mu_1}$. Let $f_{1}(x)= \frac{1}{2}\| x -b \|_{F}^{2} - a \mu_1 (\frac{1}{a}\|\cdot\|_{1})^{M}(Dx) $, $g_{1}(x)=\mu_2 \|x\|_{*}$, $g_{2}(x)=\delta_{C}(x)$,
$h_{1}(x) = \mu_1 \|x\|_{1}$, $L_1 = D$. Then (\ref{MC-denoising2}) is a special case of (\ref{convex-composite}). According to \cite{Selesnick-2020-JMIV}, $\nabla f_{1}(x) = x -b - a \mu_1 D^{T}(Dx - soft_{\frac{1}{a}}(Dx))$ and $\nabla f_{1}$ has a Lipschitz constant of $1$, where $soft_{\frac{1}{a}}$ denotes the soft thresholding operator.

In the subsequent experiments, we select images (a)–(c) in Figure \ref{test-images} as the test images. Gaussian noise with zero mean and standard deviation $\sigma_g$ is added to each image. To achieve better denoising performance, for images with different noise levels, we search and tune the regularization parameters $\mu_1, \mu_2$ and the nonconvexity parameter $a$, while ensuring that problem (\ref{MC-denoising}) remains convex, so as to obtain the best possible denoising quality. The resulting parameter settings are summarized in Table \ref{regularizaton-2}.

\begin{table}[h]
\centering
\caption{Selected values of the regularization parameters $\mu_1, \mu_2$ and the nonconvexity parameter $a$ for the nonconvex image denoising model (\ref{MC-denoising}).}
\begin{tabular}{c|ccccccc}
\hline
\multirow{2}[1]{*}{Image}  & \multicolumn{3}{c}{$\sigma_g = 15$}  & & \multicolumn{3}{c}{$\sigma_g = 25$}   \\
\cline{2-4} \cline{6-8}
 &  $\mu_1$ & $\mu_2$ &  $a$ & & $\mu_1$ & $\mu_2$  & $a$  \\
\hline
Building
  & $4.8$ & $204.5$  & $0.0234$  & &  $6.8$   & $399.4$ & $0.0165$\\
Goldhill
  & $7.5$ & $62$  & $0.0083$ & & $14$ & $90.2$ & $0.0045$ \\
 Castle
  & $8.1$ & $58$  & $0.0139$ & & $14.2$ & $85.2$ & $0.0079$ \\
\hline
\end{tabular}
\label{regularizaton-2}
\end{table}

\subsubsection{Numerical results and discussions}

We compare the algorithm proposed in this paper with the PFDR algorithm from \cite{Tang2022JSC} in solving problem (\ref{MC-denoising}); the results are shown in Table \ref{denoising-results}. The results in Table \ref{denoising-results} clearly demonstrate the superiority of the proposed algorithm over the PFDR method in terms of reconstruction quality. Across all test images and noise levels, the proposed algorithm consistently achieves the highest PSNR and SSIM values, indicating its enhanced capability to recover fine details and preserve structural information. Although the numerical improvements may appear marginal, their consistency across diverse scenarios highlights the robustness and reliability of the proposed approach. Notably, the performance gains become more meaningful under higher noise levels, where accurate recovery is more challenging. While the proposed method requires a larger number of iterations and higher computational cost, this trade-off is justified by the improved restoration accuracy. Figure \ref{denoising-psnr-func} presents the evolution of the objective function values and PSNR curves with respect to the number of iterations under different noise levels. Furthermore, Figures \ref{Building-denoising}, \ref{Goldhill-denoising}, and \ref{Castle-denoising} present the images restored by the two algorithms, providing a visual comparison of their denoising performance.

\begin{table}[h]
\centering
\footnotesize
\caption{Numerical results for solving (\ref{MC-denoising}) in terms of PSNR (dB), SSIM, number of iterations, and CPU time (seconds).}
\begin{tabular}{c|c|ccc}
\hline
 \multirow{2}[1]{*}{Image}&  \multirow{2}[1]{*}{Noise level} & Input  & PFDR \cite{Tang2022JSC}   & Proposed algorithm  \\
& & PSNR/SSIM & PSNR/SSIM/Iter/CPU &   PSNR/SSIM/Iter/CPU  \\
\hline
\hline
 \multirow{2}[1]{*}{Building}
 & $\sigma_g =15$  & $24.6145/0.7236$ & $29.4727$/$0.8681$/$91$/$27.1$ &  $29.4755$/$0.8682$/$298$/$114.8$ \\
 & $\sigma_g = 25$   & $20.1804/0.5341$ & $27.0207$/$0.7930$/$160$/$50.2$  &    $27.0236$/$0.7931$/$375$/$88.6$   \\
\hline
\multirow{2}[1]{*}{Goldhill}
 & $\sigma_g =15$ & $24.6091/0.5273$ & $30.7700$/$0.8078$/$77$/$25.5$ &  $30.7796$/$0.8084$/$434$/$102.7$ \\
 & $\sigma_g = 25$ & $20.1844/0.3227$ & $28.6664$/$0.7297$/$127$/$54.9$  &    $28.6777$/$0.7306$/$658$/$169.7$   \\
\hline
\multirow{2}[1]{*}{Castle}
 & $\sigma_g =15$ & $24.6112/0.4647$ & $30.9838$/$0.8600$/$87$/$18.4$ &  $30.9898$/$0.8609$/$467$/$57.5$ \\
 & $\sigma_g = 25$  & $20.1742/0.3005$ & $28.2860$/$0.7971$/$128$/$27.4$  &    $28.2909$/$0.7984$/$654$/$92.6$   \\
\hline
\end{tabular}\label{denoising-results}

\end{table}

\begin{figure}[H]
     \setlength{\abovecaptionskip}{0pt}
  \centering
  \makeatletter
    \subfigure[Building, $\sigma_g = 15$]{
        \scalebox{0.4}{\includegraphics{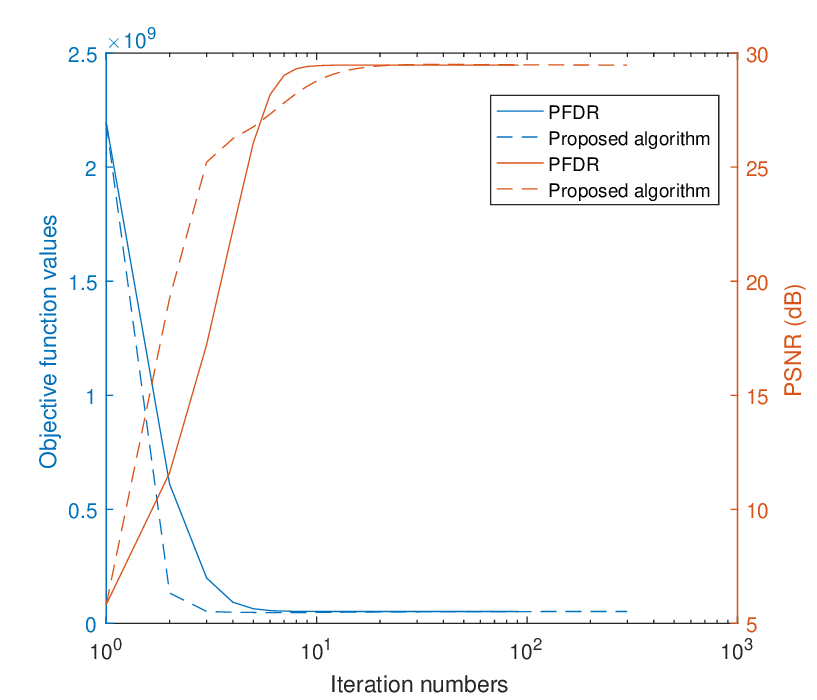}}
    } \hspace{-10pt}
      \subfigure[Building, $\sigma_g = 25$]{
        \scalebox{0.4}{\includegraphics{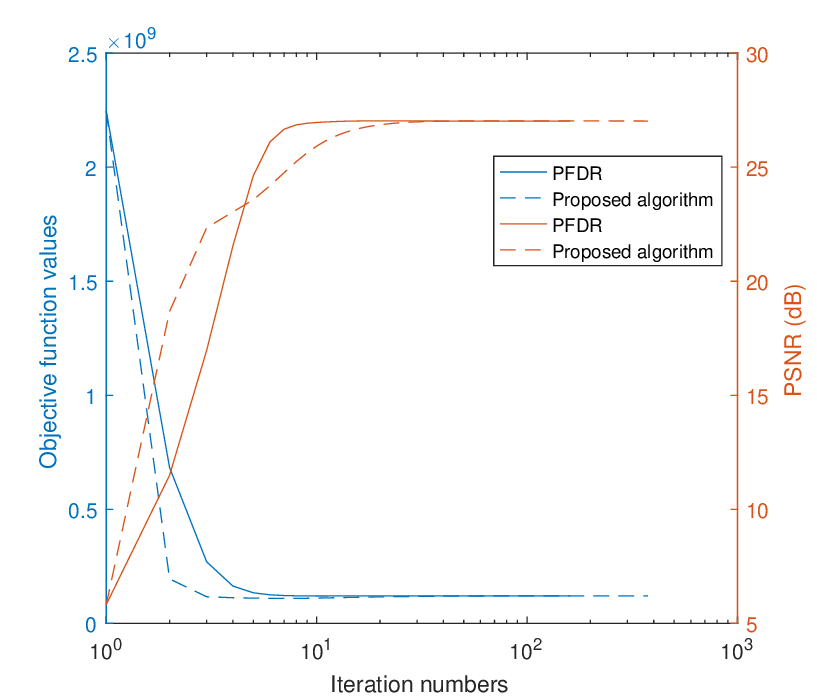}}
    } \\
    \subfigure[Goldhill, $\sigma_g = 15$]{
        \scalebox{0.4}{\includegraphics{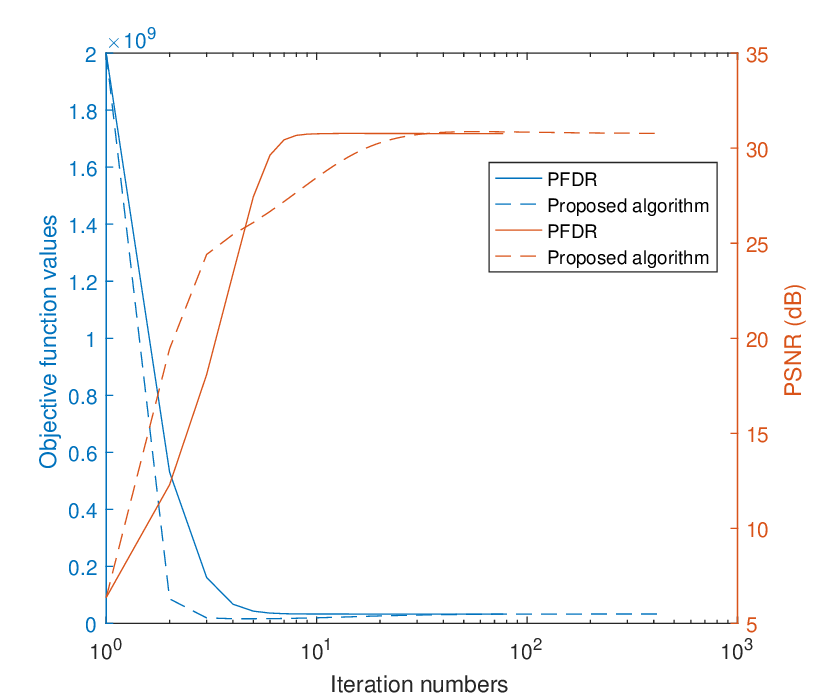}}
    } \hspace{-10pt}
      \subfigure[Goldhill, $\sigma_g = 25$]{
        \scalebox{0.4}{\includegraphics{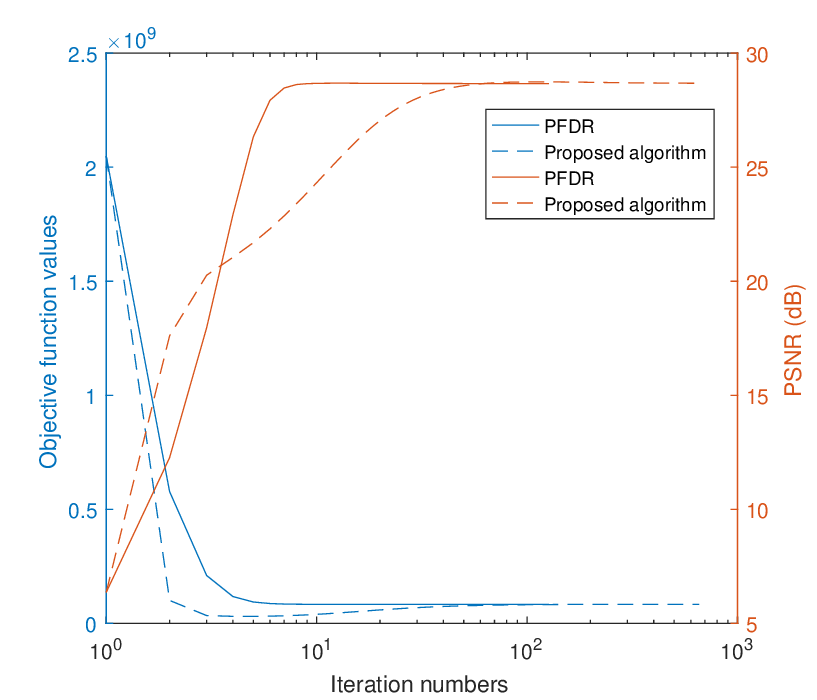}}
    }\\
     \subfigure[Castle, $\sigma_g = 15$]{
        \scalebox{0.4}{\includegraphics{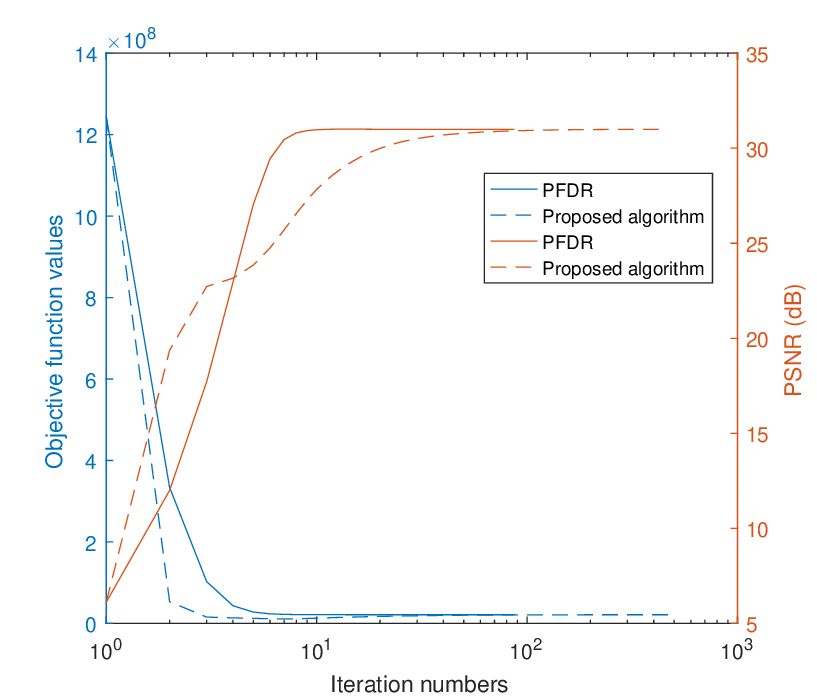}}
    } \hspace{-10pt}
      \subfigure[Castle, $\sigma_g = 25$]{
        \scalebox{0.4}{\includegraphics{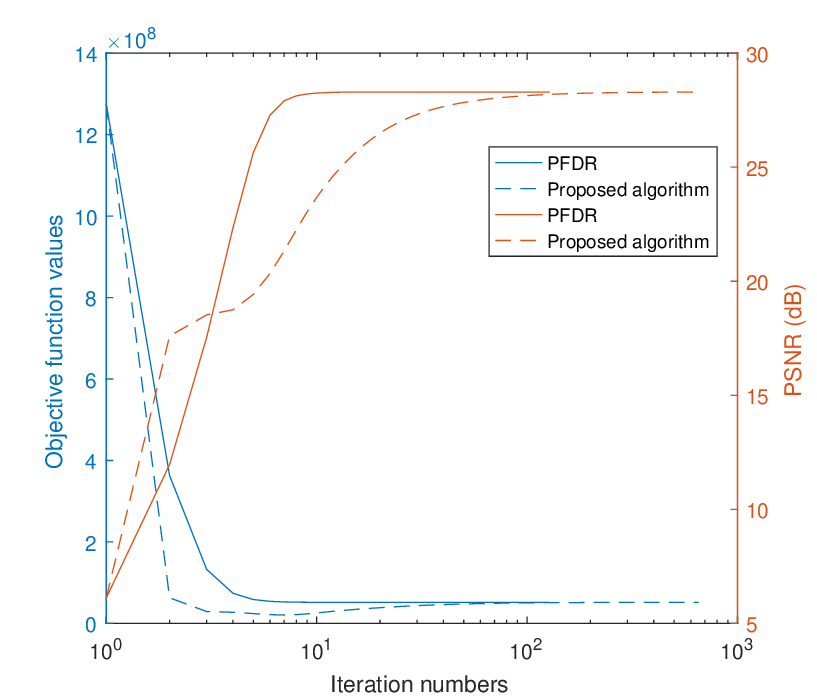}}
    }\\
    \caption{Objective function values and PSNR versus the number of iterations for the test images ``Building", ``Goldhill", and ``Castle" under different noise levels.}
    \label{denoising-psnr-func}
\end{figure}

\begin{figure}[H]
     \setlength{\abovecaptionskip}{0pt}
  \centering
  \makeatletter
    \subfigure[$\sigma_g = 15$]{
        \scalebox{0.35}{\includegraphics{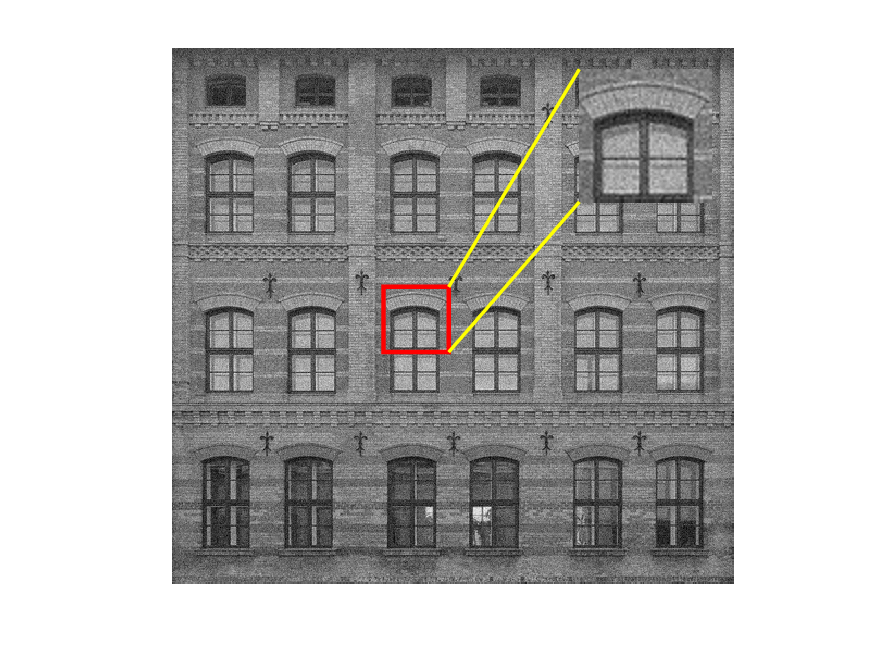}}
    } \hspace{-30pt}
      \subfigure[PFDR]{
        \scalebox{0.35}{\includegraphics{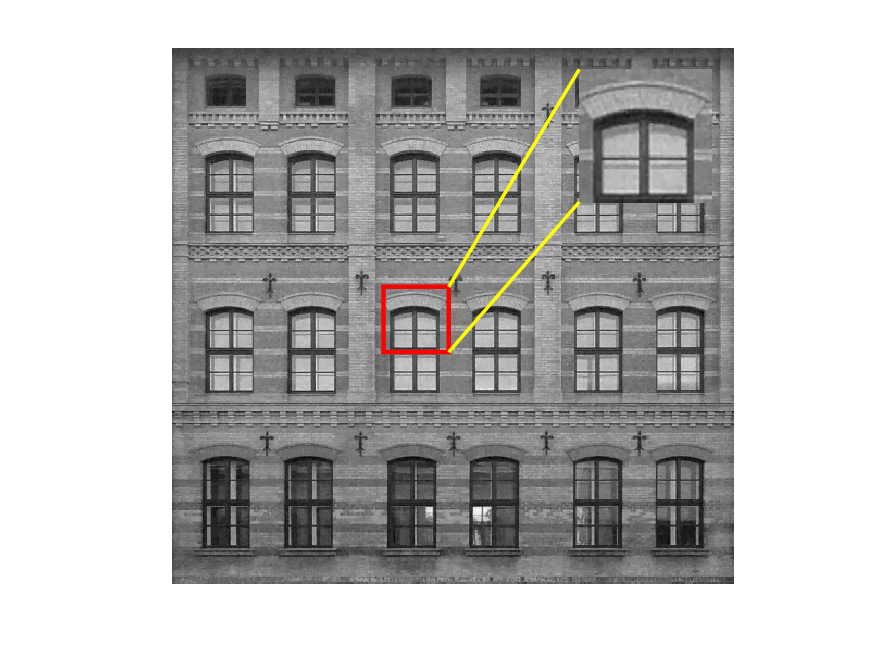}}
    } \hspace{-30pt}
    \subfigure[Proposed algorithm]{
        \scalebox{0.35}{\includegraphics{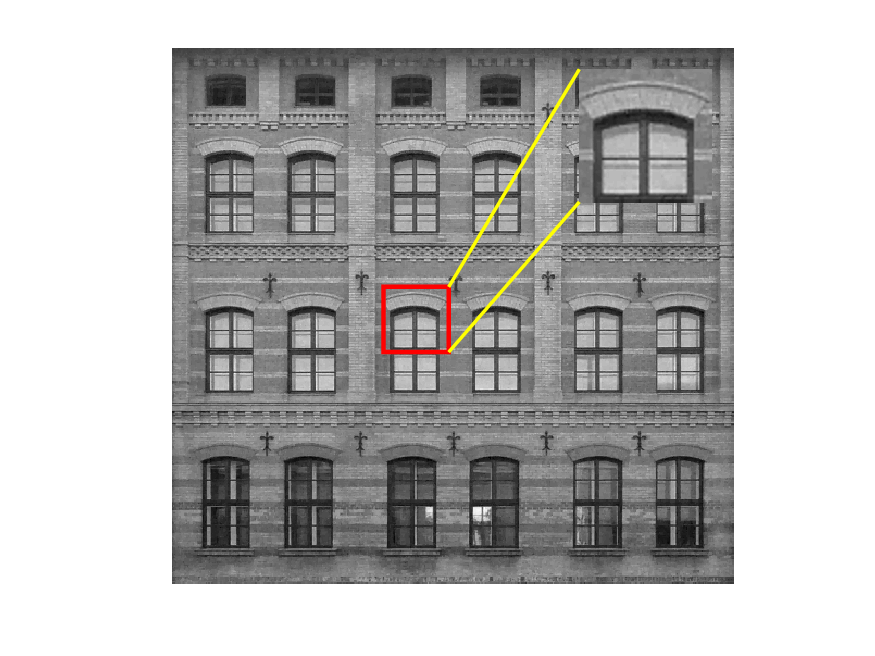}}
    }\\
    \subfigure[$\sigma_g = 25$]{
        \scalebox{0.35}{\includegraphics{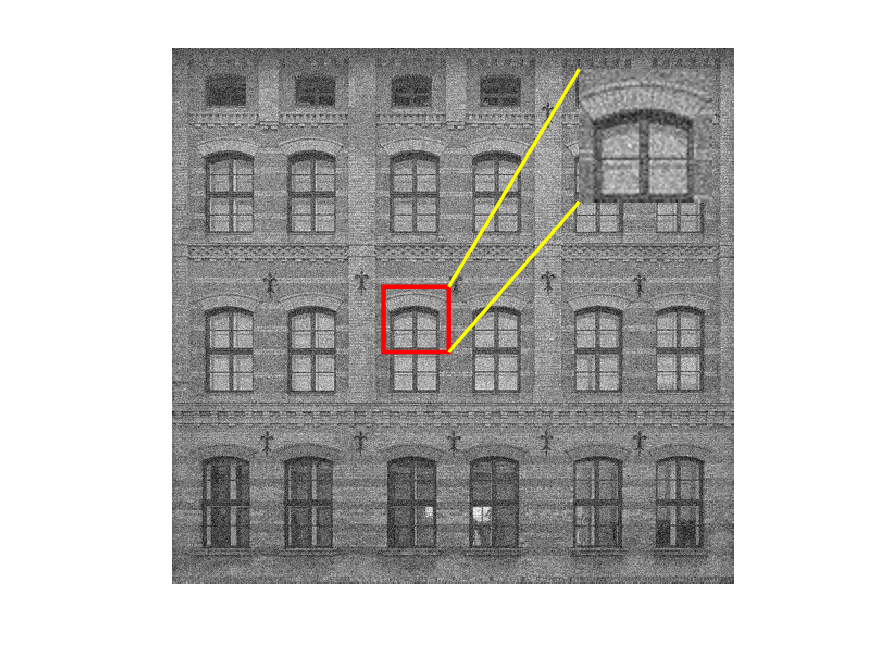}}
    } \hspace{-30pt}
      \subfigure[PFDR]{
        \scalebox{0.35}{\includegraphics{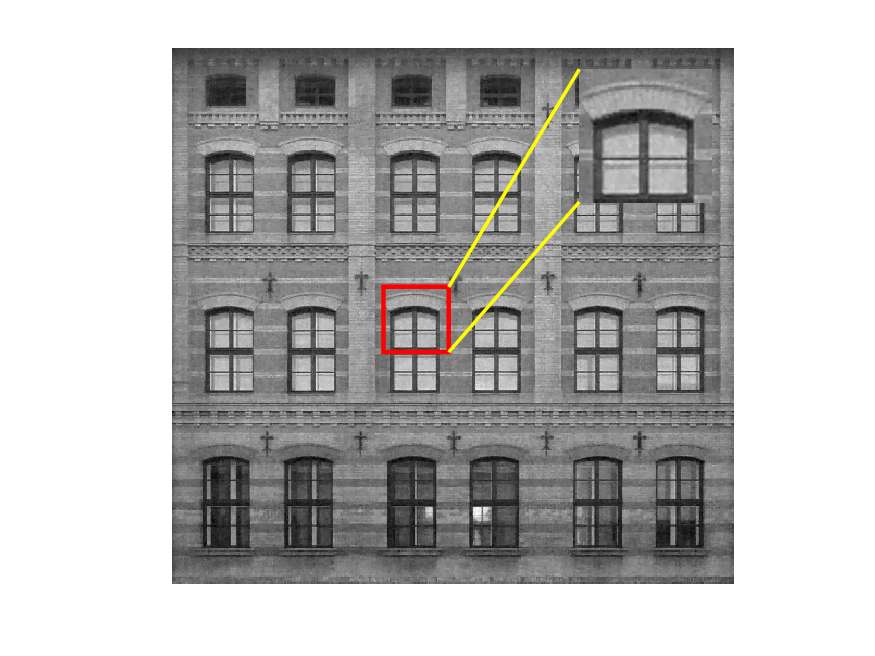}}
    } \hspace{-30pt}
    \subfigure[Proposed algorithm]{
        \scalebox{0.35}{\includegraphics{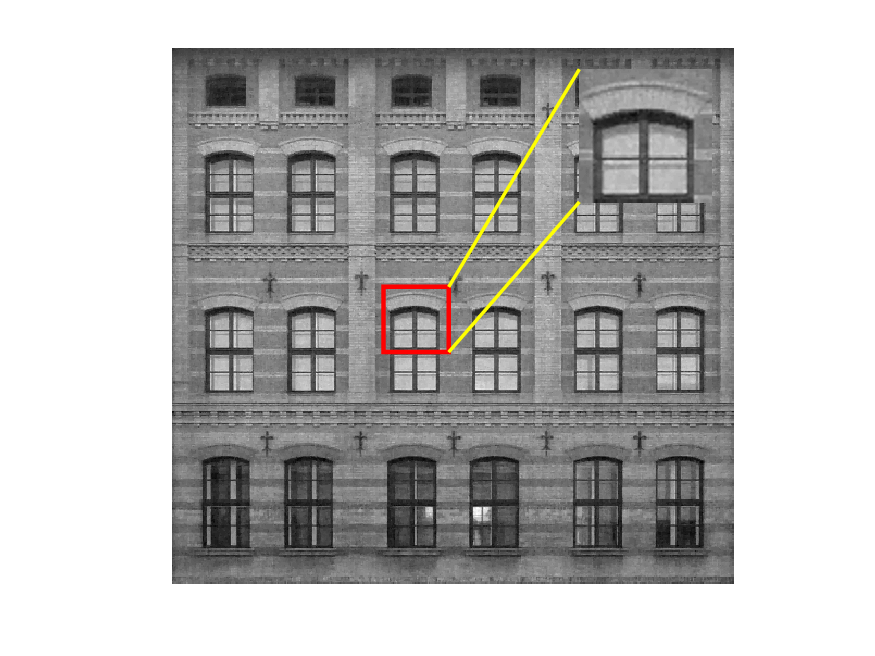}}
    }\\
    \caption{Corrupted and restored results for image denoising of  the ``Building" image. The first column presents the corrupted images, whereas the second and third columns show the images restored using PFDR \cite{Tang2022JSC} and the proposed algorithm, respectively. }
    \label{Building-denoising}
\end{figure}

\begin{figure}[H]
     \setlength{\abovecaptionskip}{0pt}
  \centering
  \makeatletter
    \subfigure[$\sigma_g = 15$]{
        \scalebox{0.35}{\includegraphics{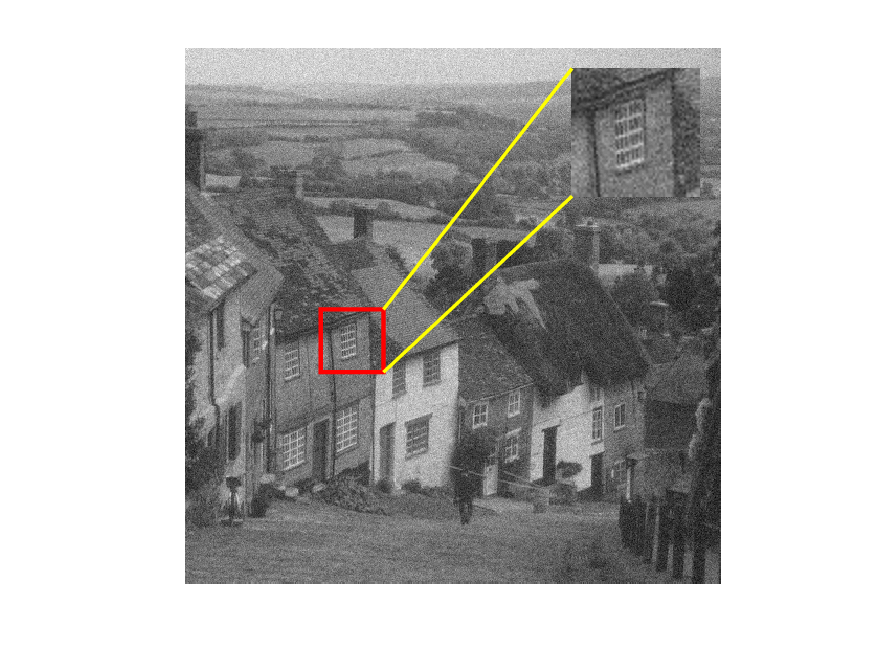}}
    } \hspace{-30pt}
      \subfigure[PFDR]{
        \scalebox{0.35}{\includegraphics{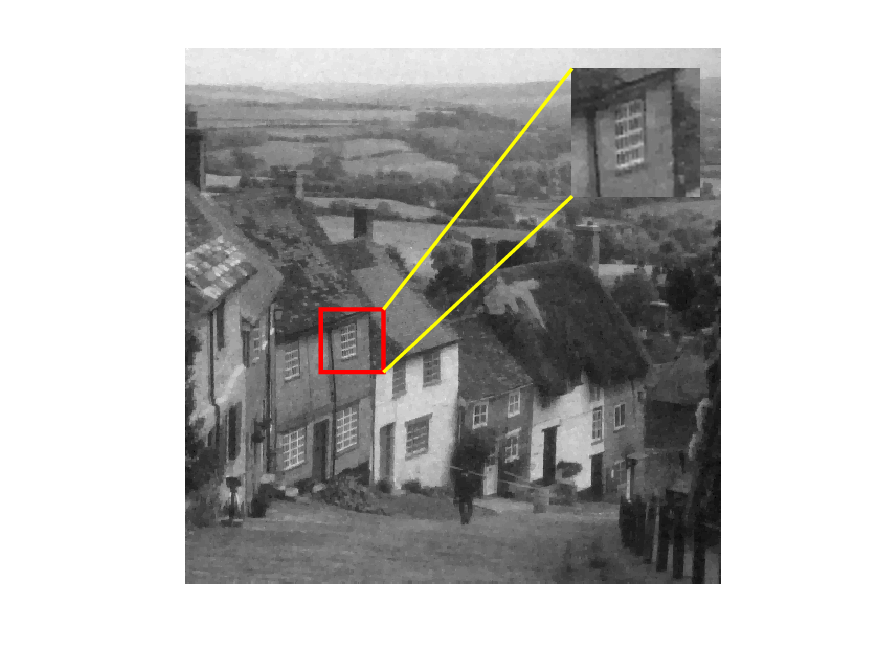}}
    } \hspace{-30pt}
    \subfigure[Proposed algorithm]{
        \scalebox{0.35}{\includegraphics{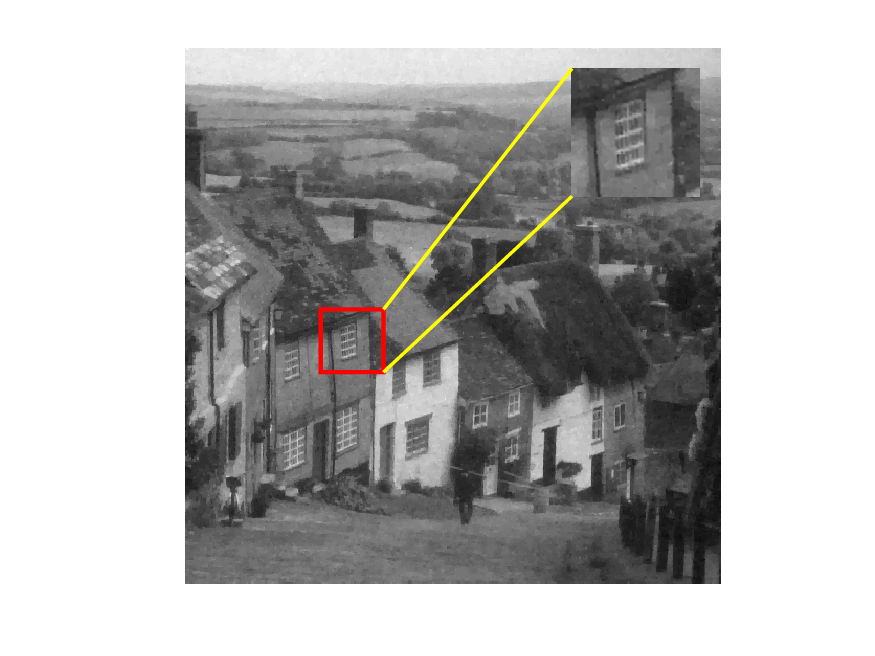}}
    }\\
    \subfigure[$\sigma_g = 25$]{
        \scalebox{0.35}{\includegraphics{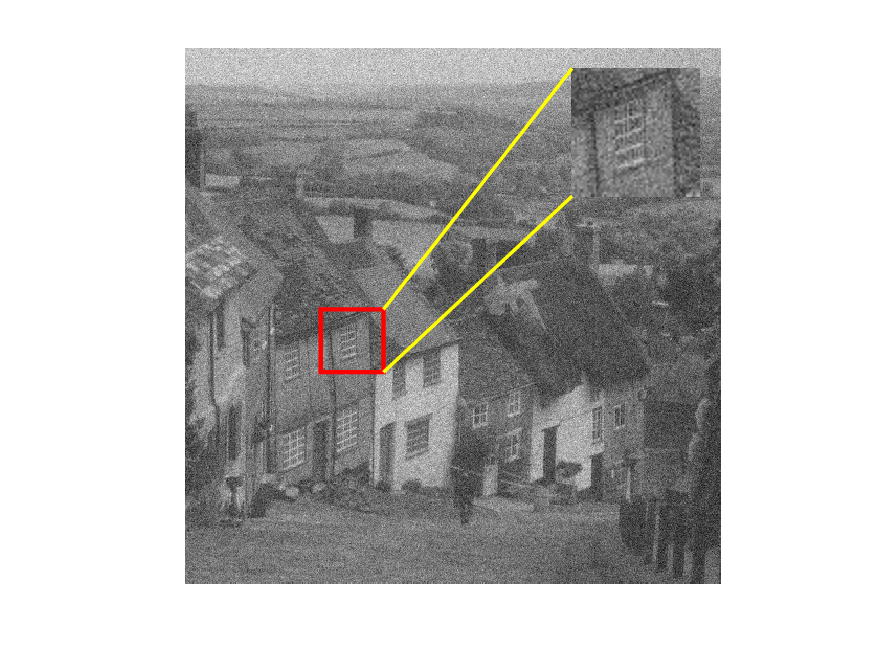}}
    } \hspace{-30pt}
      \subfigure[PFDR]{
        \scalebox{0.35}{\includegraphics{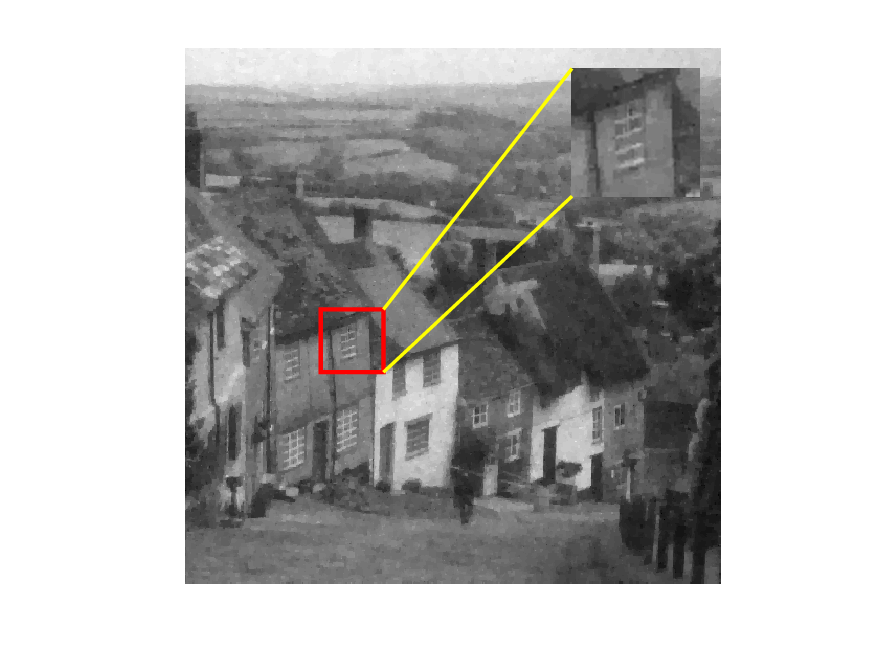}}
    } \hspace{-30pt}
    \subfigure[Proposed algorithm]{
        \scalebox{0.35}{\includegraphics{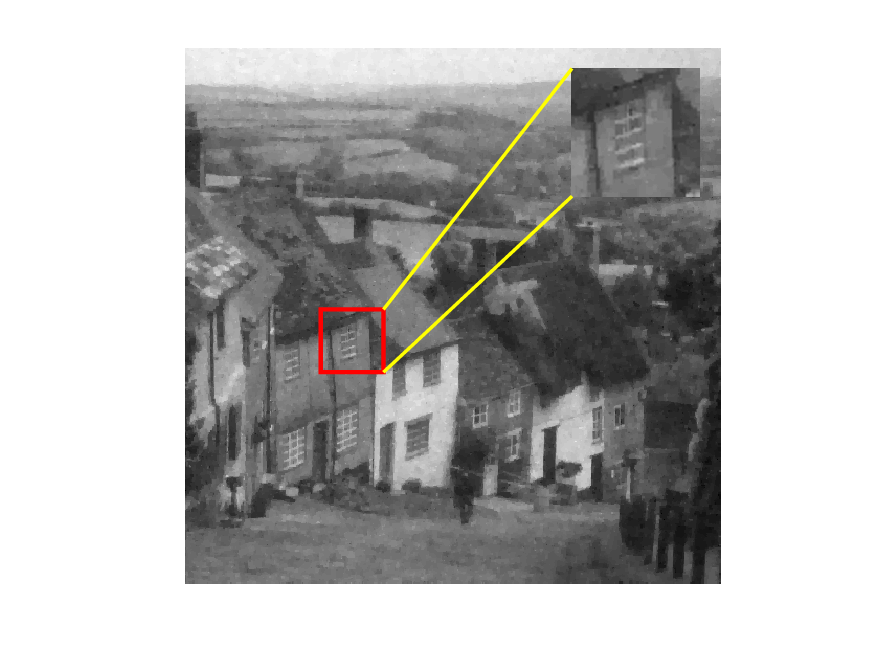}}
    }\\
    \caption{Corrupted and restored results for image denoising of  the ``Goldhill" image. The first column presents the corrupted images, whereas the second and third columns show the images restored using PFDR \cite{Tang2022JSC} and the proposed algorithm, respectively. }
    \label{Goldhill-denoising}
\end{figure}

\begin{figure}[H]
     \setlength{\abovecaptionskip}{0pt}
  \centering
  \makeatletter
    \subfigure[$\sigma_g = 15$]{
        \scalebox{0.35}{\includegraphics{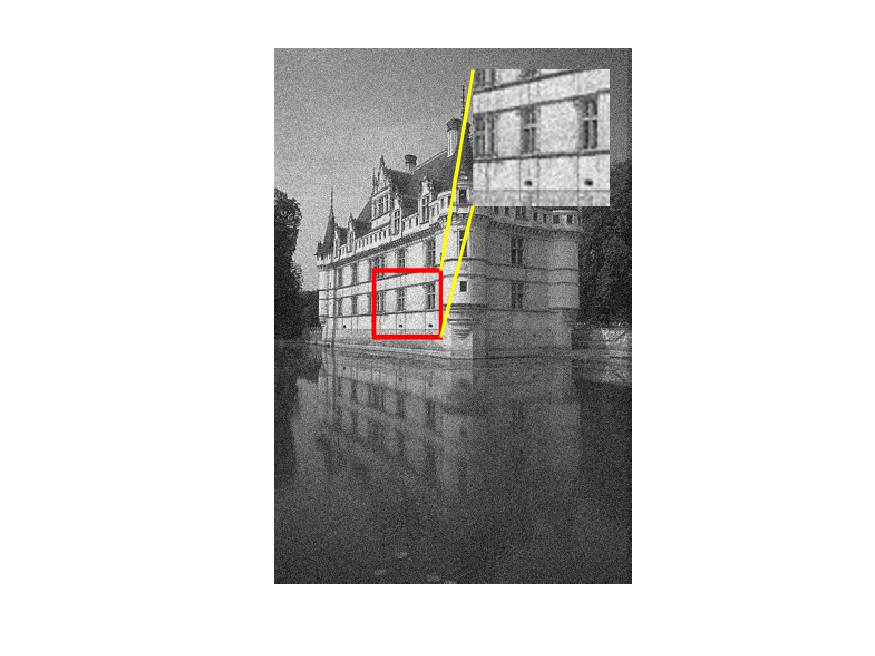}}
    } \hspace{-30pt}
      \subfigure[PFDR]{
        \scalebox{0.35}{\includegraphics{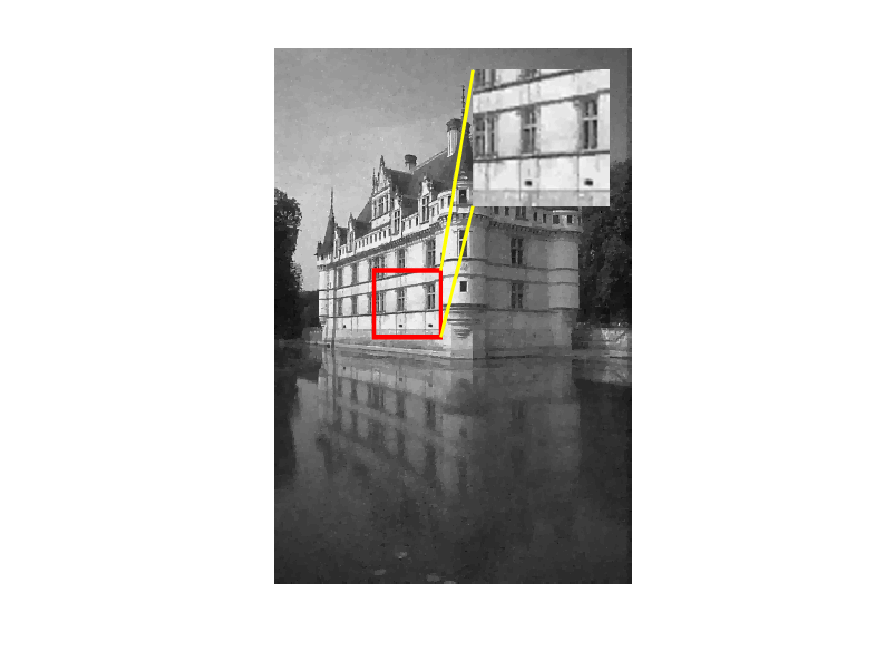}}
    } \hspace{-30pt}
    \subfigure[Proposed algorithm]{
        \scalebox{0.35}{\includegraphics{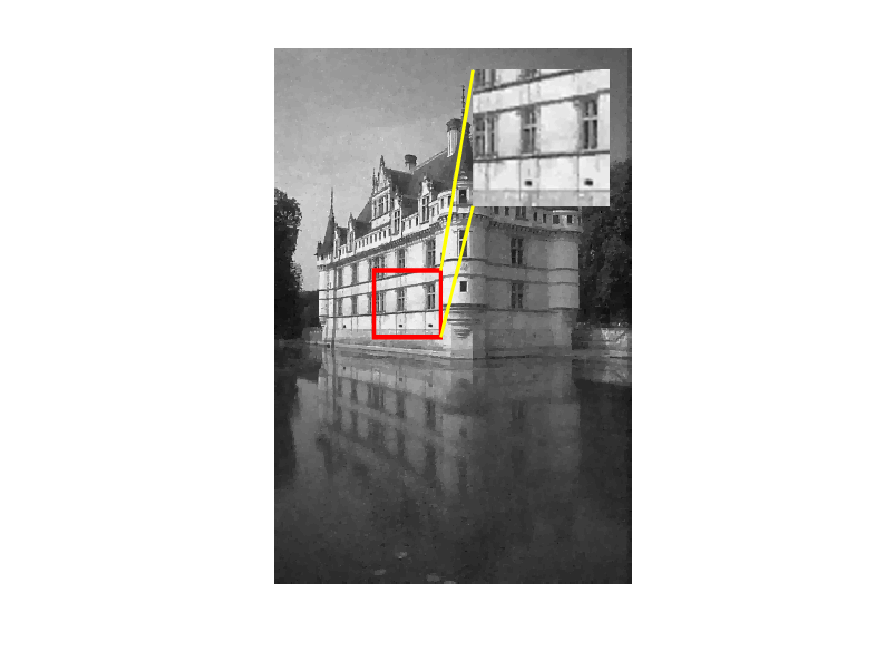}}
    }\\
    \subfigure[$\sigma_g = 25$]{
        \scalebox{0.35}{\includegraphics{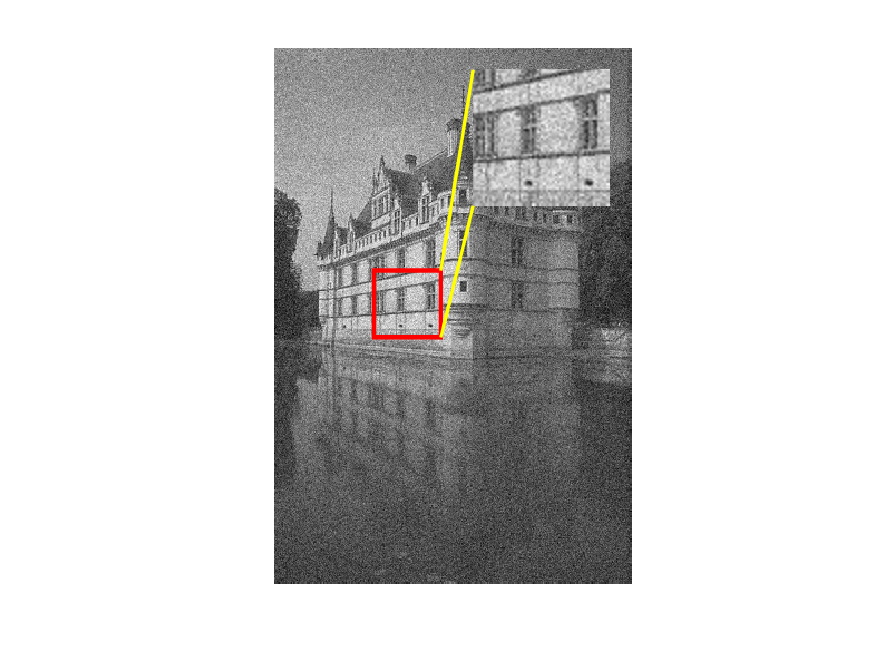}}
    } \hspace{-30pt}
      \subfigure[PFDR]{
        \scalebox{0.35}{\includegraphics{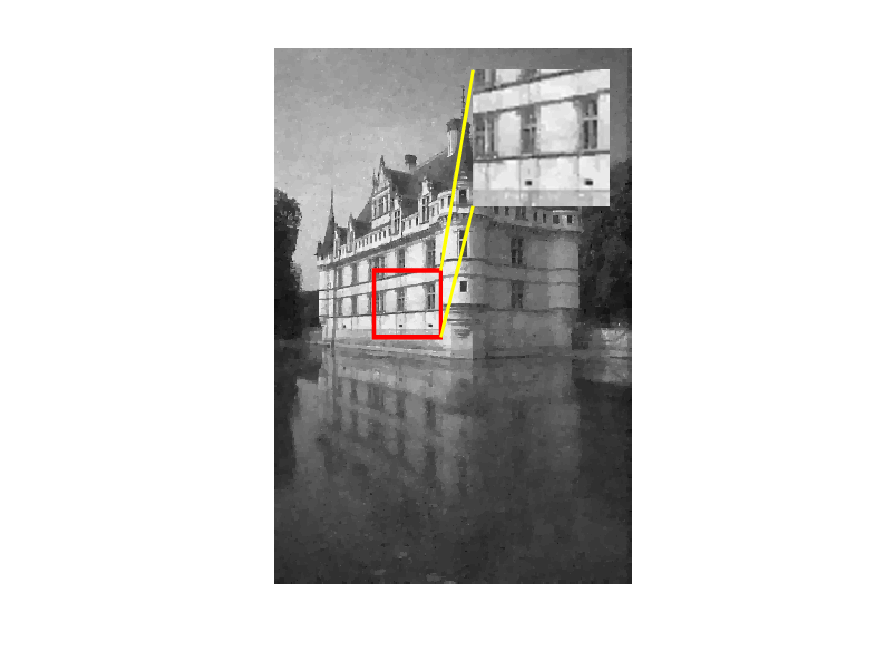}}
    } \hspace{-30pt}
    \subfigure[Proposed algorithm]{
        \scalebox{0.35}{\includegraphics{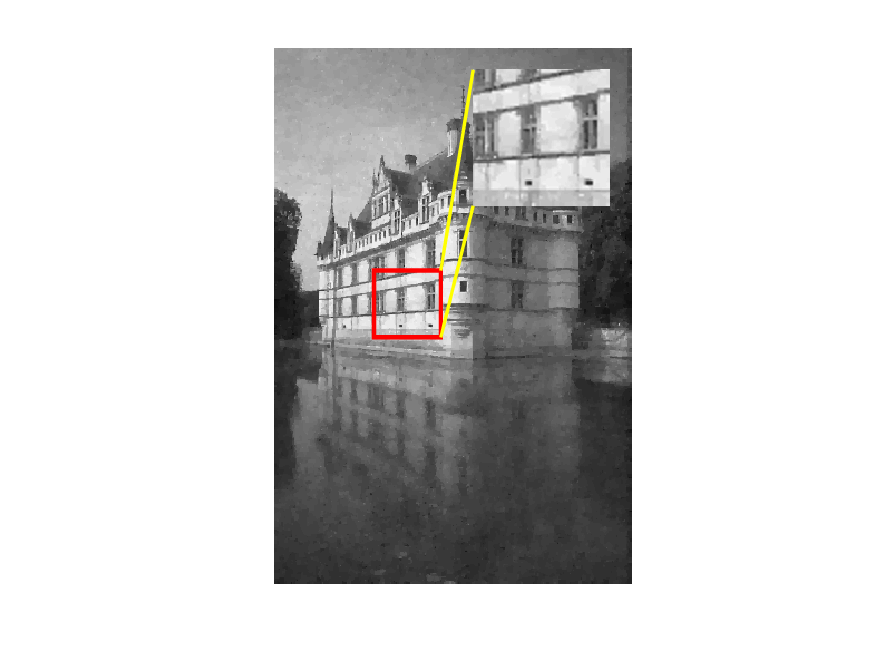}}
    }\\
    \caption{Corrupted and restored results for image denoising of  the ``Castle" image. The first column presents the corrupted images, whereas the second and third columns show the images restored using PFDR \cite{Tang2022JSC} and the proposed algorithm, respectively. }
    \label{Castle-denoising}
\end{figure}

\section{Conclusions}

In this work, we addressed a general class of monotone inclusion problems involving the sum of multiple maximally monotone operators, several cocoercive operators, and a composite term defined by the composition $L^{*}BL$, where $L$ is a bounded linear operator and $B$ is maximally monotone. This framework encompasses a wide range of structured convex optimization and saddle-point problems frequently encountered in imaging, signal processing, and variational analysis. To solve this class of problems, we proposed a novel primal–dual splitting algorithm that extends and unifies several well-known schemes.  We established the weak convergence of the algorithm under standard assumptions on monotonicity and cocoercivity, and demonstrated strong convergence under additional regularity conditions such as uniform monotonicity. Numerical results on image restoration tasks showed that the proposed method is competitive with existing approaches. Overall, our results extend the scope of operator splitting methods for solving composite monotone inclusions and contribute new theoretical and algorithmic tools to the field of monotone operator theory and convex optimization.
Future work may explore stochastic variants, inertial extensions, and applications to deep unfolding settings.

\section*{Acknowledgement}
We sincerely thank the editor and the anonymous reviewers for their valuable comments and constructive suggestions, which have greatly contributed to improving the quality of this work.


\section*{Funding}

This work was supported  by the National Natural Science Foundations of China (12031003, 12571491, 12571558), the Guangzhou Education Scientific Research Project 2024 (202315829), and the Jiangxi Provincial Natural Science Foundation (20224ACB211004).

\section*{Competing Interests}

The authors declare no competing interests.

\section*{Data Availability Statement}

The data that support the findings of this study are publicly available at the following GitHub repository: https://github.com/hhaaoo1331/A-primal-dual-splitting-algorithm-for-monotone-inclusions-with-applications.


\end{document}